\tikzstyle{vertex}=[circle, draw, inner sep=0pt, minimum size=6pt]
\tikzstyle{stan}=[diamond, draw, inner sep=0pt, minimum size=6pt]
\theoremstyle{thmstyleone}
\newtheorem{ejem}{Example}
\newtheorem{defi}{Definition}
\newtheorem{teo}{Theorem}
\newtheorem{prop}{Proposition}
\newtheorem{lema}{Lemma}
\newtheorem{coro}{Corollary}
\numberwithin{ejem}{section}
\numberwithin{defi}{section}
\numberwithin{teo}{section}
\numberwithin{prop}{section}
\numberwithin{lema}{section}
\numberwithin{coro}{section}
\numberwithin{conjetura}{section}
\newtheorem{obs}{Remark}[section]
\newtheorem{remark}[ejem]{Remark}
 \title{The kernel-subdivision number of a digraph}
\author{Teresa I. Hoekstra-Mendoza$^1$ \\ Miguel Licona-Vel\'azquez$^2$ \\ Roc\'io Rojas-Monroy$^3$ 
\\ \\
{\small $^2$ Universidad Autónoma Metropolitana-Iztapalapa } \\
{\small Mexico City, Mexico}\\
{\small {\tt eliconav23@xanum.uam.mx}}
\\
{\small $^1$ Centro de Investigaci\'on en Matem\'aticas, }
{\small Guajanajuato, Mexico}\\
{\small\tt maria.idskjen@cimat.mx}
\\
{\small $^3$Universidad Aut\'onoma del Estado de M\'exico} \\
{\small Toluca, Mexico}\\
{\small {\tt mrrm@uaemex.mx}} \\
\\
}
\date{December 2023}
\begin{document}

\maketitle

\begin{abstract}
    It is well known that determining if a digraph has a kernel is an NP-complete problem. However, Topp proved that when subdividing every arc of a digraph we obtain a digraph with a kernel. In this paper we define the kernel subdivision number $\kappa(D)$ of a digraph $D$ as the minimum number of arcs, such that, when subdividing them, we obtain a digraph with a kernel. We give a general bound for $\kappa(D)$ in terms of the number of directed cycles of odd length and compute $\kappa(D)$ for a few families of digraphs.
    If the digraph is $H$-colored, we can analogously define the $H$-kernel subdivision number. In this paper we also improve a result for $H$-kernels given by Galeana et al. to subdividing every arc of a spanning subgraph with certain properties.  Finally we prove that when the directed cycles of a digraph overlap little enough, we can obtain a good bound for the $H$-kernel subdivision number.
\end{abstract}
    
\textbf{Keywords:} Kernel subdivision number, Directed graphs, $H$-kernel, Partial subdivision digraph 
%\classification{05C20, 05C69, 05C76}

\section*{Introduction}
For general concepts we refer the reader to \cite{CL}.
Let $D$ be a digraph. A subset $N$ of $V(D)$ is said to be a \textit{kernel} if it is both independent (there are no arcs between any pair of vertices in $N)$ and absorbent (for all $u\in V(D)\setminus N$ there exists $v$ in $N$ such that $(u,v)\in A(D)).$ The concept of kernel was introduced in \cite{VNM} by von Newmann and Morgenstern in the context of Game Theory as a solution for cooperative $n$-players games. Kernels have been studied by several authors, see for example, \cite{HV}, \cite{Matus}, \cite{BG} and \cite{RV}. Chvátal proved in \cite{CV} that recognizing digraphs that have a kernel is an NP-complete problem, so finding sufficient conditions for a digraph to have a kernel or finding large families of digraphs with kernels are two lines of investigation widely studied by many authors. For example, in \cite{J} Topp defined the subdivision  digraph $S(D)$ of a digraph $D$ ($D$ possibly infinite), as follows.
%ver si se puede relacionar con el max cut problem

\begin{defi}\label{def1}
Let $D$ be a digraph. The subdivision of $D,$ denoted by $S(D),$ is the digraph such that:
\begin{enumerate}
\item $V(S(D))= V(D)\cup A(D).$
\item $A(S(D))= \{(u,a): a=(u,v)\in A(D)\} \cup \{(a,v): a=(u,v)\in A(D)\},$ for each $a\in A(D).$ 
\end{enumerate}
\end{defi} 

Topp proved that $S(D)$ has at least one kernel for every digraph $D$, thus a natural question that one can ask is what is the smallest number of edges that need to be subdivided in so that the resulting digraph has a kernel.
We define the \textbf{kernel subdivision number} of a digraph $D$ as follows:
\begin{defi}
Given a digraph $D$, and a set $\Lambda \subset A(D)$, let $D_{\Lambda}$ denote the digraph obtained from $D$ by subdividing every arc of $\Lambda$.
We define the kernel-subdivision number of $D$, $\kappa(D)$ as the smallest cardinality of a set $\Lambda$ such that $D_{\Lambda}$ has a kernel.
\end{defi}

\begin{remark}\label{topp}
For every digraph $D$, $0 \leq \kappa(D) \leq |A(D)|$.
\end{remark}

\begin{prop}\label{ciclo}
For the directed cycle on length $n$ we have $$\kappa(C_n)= \left\{ \begin{array}{ccc}
   1  & \mbox{if} & n \mbox{ is odd}  \\
   0  &  \mbox{if} & n \mbox{ is even}
\end{array}\right..$$
\end{prop}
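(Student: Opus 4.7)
The plan is to handle the even and odd cases separately, using the observation that subdividing a single arc of $C_n$ yields $C_{n+1}$.

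For the even case, label the vertices of $C_n$ as $v_0, v_1, \ldots, v_{n-1}$ with $A(C_n) = \{(v_i, v_{i+1 \bmod n}) : 0 \le i \le n-1\}$. I would propose the set $N = \{v_1, v_3, v_5, \ldots, v_{n-1}\}$ as a kernel. Independence is immediate because the only arcs present go from $v_i$ to $v_{i+1}$, and consecutive indices have different parity, so no arc joins two vertices of $N$. Absorbency is equally transparent: every even-indexed vertex $v_{2k}$ has its unique out-neighbor $v_{2k+1}$ (interpreting indices mod $n$, where we use that $n$ is even so $v_{n-1}$ is indeed odd-indexed) in $N$. Hence $C_n$ already has a kernel, so $\kappa(C_n)=0$.

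For the odd case, I would first show $\kappa(C_n)\ge 1$ by arguing $C_n$ has no kernel. Suppose, for contradiction, $N$ were a kernel of $C_n$. Since every vertex has out-degree $1$, absorbency forces each $v\notin N$ to satisfy $v^+ \in N$, while independence forces each $v\in N$ to satisfy $v^+\notin N$. Thus membership in $N$ alternates as we traverse the cycle, which is impossible when $n$ is odd. Hence $\kappa(C_n)\ge 1$.

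For the matching upper bound, observe that subdividing any single arc of $C_n$ produces the directed cycle $C_{n+1}$ on $n+1$ vertices (the new subdivision vertex has the subdivided arc's tail as its in-neighbor and head as its out-neighbor, and all other arcs remain). When $n$ is odd, $n+1$ is even, so by the even case $C_{n+1}$ has a kernel. Therefore $\kappa(C_n)\le 1$, and equality follows. There is no real obstacle here beyond keeping index parity straight in the even case.
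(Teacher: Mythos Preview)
Your proof is correct. The paper states this proposition without proof, treating it as an elementary observation; your argument is exactly the standard one the authors presumably have in mind, so there is nothing to compare.
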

In Section \ref{S1}, we give an upper bound for $\kappa(D)$ which improves the bound obtained from Topp's result noticed in Remark \ref{topp} and give a an example of a digraph for which this bound coincides with its kernel subdivision number. We also do explicit computations for $\kappa(D)$ for several families of digraphs.
In the rest of the paper we consider a different variant of kernels, for which we require the following definitions.
\\

A digraph $D$ is said to be \textit{$m$-colored} if the arcs of $D$ are colored with $m$ colors. Let $D$ be an $m$-colored digraph, a path in $D$ is called \textit{monochromatic} if all of its arcs are colored alike. For an arc $(u,v)$ of $D$ we will denote its color by $c_{D}(u,v).$ Let $C=\{b_{1},\ldots,b_{m}\}$ be the set of colors used to color $A(D)$ and if $u$ is a vertex of $D,$ the set $\{b\in C:$ $c_{D}(u,v)=b$ for some $v$ in $V(D)\}$  will be denoted by $\xi _{D}^{+}(u).$ 

Let $D$ be an $m$-colored digraph, a subset $K$ of $V(D)$ is said to be a \textit{kernel by monochromatic paths} ($Kmp$) if it satisfies the following conditions: (1) there are no monochromatic paths between every pair of vertices in $K$ ($K$ is independent by monochromatic paths) and (2) for every vertex $x\in V(D)\setminus K$ there is an $xK$-monochromatic path ($K$ is absorbent by monochromatic paths). The notion of $Kmp$ was studied first in \cite{SSW} by Sands, Sauer and Woodrow.

The concept of kernel by monochromatic paths generalizes the concept of a kernel since an  kernel by monochromatic paths is a kernel when all the monochromatic paths have length one.

In \cite{HR} Galeana-Sánchez and Rojas-Monroy defined the subdivision digraph $S(D)$ of an $m$-colored digraph $D$, as follows.

\begin{defi}\label{defi2}
Let $D$ be an $m$-colored digraph. The subdivision of $D,$ denoted by $S(D),$ is the $m-colored$ digraph such that:
\begin{enumerate}
\item $V(S(D))= V(D) \cup A(D).$
\item $A(S(D))= \{(u,a): a=(u,v)\in A(D)\} \cup \{(a,v): a=(u,v)\in A(D)\},$ for each $a\in A(D).$ 
\item $S(D)$ is the $m-colored$ digraph where $(u,a,v)$ is a monochromatic path with the same color of $a=(u,v)\in A(D)$  with $\{u,v\}\subseteq V(D)$ and $a\in A(D).$
\end{enumerate}
\end{defi}

Galeana-Sánchez and Rojas-Monroy  proved that if $D$ has no monochromatic infinite outward path, then $S(D)$ has a $Kmp$.
\\
Let $H$ be a digraph possibly with loops, and $D$ be a digraph without loops whose arcs are colored with the vertices of $H,$ so $D$ is said to be an \textit{$H$-colored} digraph. A directed walk in $D$ is said to be an \textit{$H$-walk} if and only if the consecutive colors encountered on $W$ form a directed walk in $H.$ An $H$-path is an $H$-walk such that all the vertices are distinct, notice that an arc in $D$ is an $H$-path, that is to say, a singleton vertex is a walk in $H,$ and the concatenation of two $H$-walks is not always an $H$-walk. 
\\

In \cite{AL} Arpin and Linek defined the concept of $H$-kernel by walks as follows.
\\
Let $H$ be a digraph possibly with loops, and $D$ an $H$-colored digraph, subset $N$ of vertices of $D$ is said to be an $H$-kernel by walks if it holds both: (1) for every pair of vertices in $N$ there is no $H$-walk between them ($N$ is $H$-independent by walks) and (2) for each vertex $u$ in $V(D)\setminus N$ there exists an $H$-walk from $u$ to $N$ in $D$ ($N$ is $H$-absorbent by walks).  
\\

The concept of $H$-kernel generalizes the concept of a kernel and kernel by monochromatic paths since an $H$-kernel is a kernel when $A(H)=\emptyset$ and it is a kernel by monochromatic paths when $A(H)=\{(u,u):u\in V(H)\}.$        
\\

In \cite{HRR} Galeana-Sánchez, Rojas-Monroy, Sánchez-López and Zavala-Santana defined the subdivision digraph $S_{H}(D)$ with respect to $H$ of $D,$ as follows.

\begin{defi}\label{defi3}
Let $H$ and $D$ be two digraphs. A subdivision of $D,$ denoted by $S_{H}(D),$ is the $H$-colored digraph such that:
\begin{enumerate}
\item $V(S_{H}(D))= V(D) \cup A(D).$
\item $A(S_{H}(D))= \{(u,a): a=(u,v)\in A(D)\} \cup \{(a,v): a=(u,v)\in A(D)\},$ for each $a\in A(D).$ 
\item $S_{H}(D)$ is the $H$-colored digraph where $(u,a,v)$ is an $H$-walk with $\{u,v\}\subseteq V(D)$ and $a\in A(D).$
\end{enumerate}
\end{defi}

They proved that: Let $H$ be a digraph possibly with loops, $D$ a digraph without infinite outward paths and $S_{H}(D)$ a subdivision of $D$ with respect to $H.$ Suppose that $|V(D)|\geq 2k+3$ and $|\xi_{D}^{+}(v)|\leq k$ for every $v$ in $V(D)$, for some positive integer $k.$ Then, $S_{H}(D)$ has an $H$-kernel by walks. 

Section \ref{S2} is dedicated to proving that Galena-Sanchez, Rojas-Monroy, \linebreak Sánchez-López and Zavala-Santana's result holds for digraphs obtained from a digraph $D$ by subdividing every arc in a spanning subdigraph of $D.$  This leads to defining an analogue of the kernel subdivision number for $H$-kernels as follows. 
\begin{defi}
Let $H$ be a digraph possibly  with loops and $D$ an $H$-colored digraph. We define the  $H$-kernel subdivision number, denoted by $\kappa^H(D),$ to be the smallest number of arcs that need to be subdivided in $D$ in order to obtain a digraph with an $H$-kernel by walks.
\end{defi}
Finally in Section \ref{S3} we will improve the bound for $\kappa^H(D)$ obtained in Section \ref{S2}. This will be  for a specific family of graphs which turns out to be bounded by one and for when $A(H)=\{(u,u):u\in V(H)\},$ i.e we will obtain the kernel subdivision number by monochromatic paths.   

\section{A bound for $\kappa(G)$}\label{S1}
Richardson \cite{R} proved that every directed graph without odd directed cycles has a kernel. Naturally, we shall study  $\kappa(G)$ in terms of the number of cycles of $G,$ in order to get a bound for $\kappa(G)$  which improves the bound obtained in Remark \ref{topp}.
The following lemma is for undirected graphs, but it is a good warm up exercise which gives us the idea for the directed case.
\begin{lema}
Let $G$ be a planar connected graph without cut vertices, and assume $G$ has $C$ faces. Then we can subdivide at most $C$ edges to obtain a graph without odd cycles.
\end{lema}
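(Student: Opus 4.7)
The plan is to reduce the problem to finding a $T$-join in the planar dual. Fix a planar embedding of $G$; since $G$ is connected with no cut vertices it is $2$-connected (the trivial cases $|V(G)|\leq 2$ being obvious), so every face boundary is a simple cycle and these $C$ face cycles span the cycle space $Z(G)$ over $\mathbb{F}_2$. Let $T$ denote the set of faces whose boundary has odd length; the identity $\sum_{f}|\partial f|=2|E(G)|$ forces $|T|$ to be even.

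Next, I would pass to the planar dual multigraph $G^{*}$, whose vertices are the $C$ faces of $G$ and which is connected. The goal is to produce $F\subseteq E(G^{*})$ with $\deg_{F}(f)$ odd if and only if $f\in T$; this is a $T$-join. A standard spanning-tree argument supplies such an $F$ with $|F|\leq C-1$: root any spanning tree of $G^{*}$, process leaves inductively, inserting the edge from the current leaf to its parent whenever the leaf lies in $T$ and toggling the parent's membership in $T$ accordingly. Let $\Lambda\subseteq E(G)$ be the set of primal edges dual to $F$; then $|\Lambda|\leq C-1\leq C$.

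Finally, I would verify that subdividing the edges of $\Lambda$ eliminates every odd cycle. A single edge subdivision increases the length of its two incident face boundaries by $1$, so after subdividing $\Lambda$ the new length of each face $f$ has parity $|\partial f|+\deg_{F}(f)\equiv 0\pmod 2$. Hence every face of the resulting plane graph is even; since face cycles span the cycle space and parity is additive under symmetric difference, every cycle of the subdivided graph is even, i.e., the graph is bipartite. The only nontrivial ingredient is the $T$-join estimate $|F|\leq C-1$, which is a standard exercise; the remainder is bookkeeping on parities. A minor subtlety I would double-check is that the dual construction still yields the correct incidence count when $G$ has parallel faces across a single edge, but $2$-connectedness guarantees each edge lies on exactly two distinct face boundaries, so the parity accounting above goes through verbatim.
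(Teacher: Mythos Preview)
Your argument is correct and follows the same overall strategy as the paper: pass to the planar dual, identify the set $T$ of odd faces (which has even cardinality), find an edge set in the dual whose odd-degree vertices are exactly $T$, and subdivide the corresponding primal edges so that every face becomes even and hence every cycle is even. The difference is in how that edge set is built. The paper matches up the odd faces in pairs and connects each pair by a shortest path in $G^{*}$, then subdivides all primal edges crossed by some path; you instead construct a $T$-join directly via a spanning tree of $G^{*}$, which immediately gives the clean bound $|F|\leq C-1$ and avoids any worry about overlapping paths or double counting. Your version is a bit tighter and more self-contained, while the paper's matching-plus-paths picture is perhaps more geometric; either way the core mechanism---fixing face parities through a $T$-join in the dual---is the same.
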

\begin{proof}
    Since $G$ has no cut vertices, either $G=C_n$ for some $n\in \mathbb{N}$, or $G$ has an even amount of faces having an odd cycle as their boundary.
    Let $G^*$ be the dual graph of $G$, and $M$ be a matching of the vertices of $G^*$ which correspond to faces having odd cycles as their boundary in $G$. 
    %tal vez decir que escogemos los que estan más cerca entre sí.
    For every pair $(a,b)\in M$, consider the shortest $ab$-path $\gamma$ and let $\Gamma_M$ be the set of all these paths.
    Now subdivide in $G$ every edge which belongs in $G^*$ to a path in $\Gamma$.
    Then every odd cycle in $G$ has now one edge subdivided and every even cycle has now an even amount of its edges subdivided. Notice that the total amount of subdivided edges is less or equal than the amount of cycles without chords.
\end{proof}

%poner ejemplo del lema anterior

\begin{lema}\label{ciclos}
Let $D$ be a digraph without cut vertices, and let $\mathfrak{c}$ be the amount of directed cycles of $D$. Then $\kappa(D)\leq \mathfrak{c}.$
\end{lema}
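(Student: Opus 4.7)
The plan is to reduce to Richardson's theorem: every finite digraph without odd directed cycles possesses a kernel. It therefore suffices to exhibit a set $\Lambda\subseteq A(D)$ of at most $\mathfrak{c}$ arcs whose subdivision eliminates every odd directed cycle. The pivotal observation is that subdividing a single arc $a$ lengthens by one every directed cycle through $a$, so the parity of each such cycle is flipped, while cycles avoiding $a$ are unaffected. Moreover, subdivision neither creates nor destroys directed cycles, so $\mathfrak{c}$ is invariant throughout the process.

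I would proceed by induction on the number of odd directed cycles of $D$. The base case is immediate from Richardson's theorem, giving $\kappa(D)=0\le\mathfrak{c}$. For the inductive step, given an odd directed cycle $C$, the task is to choose an arc $a\in C$ whose subdivision flips $C$ to even without producing a net increase in the odd-cycle count. The hypothesis that $D$ has no cut vertex would be invoked here to guarantee enough structural richness inside each odd cycle so that a suitable arc $a$ exists (for instance, ensuring that each arc lies on a cycle and preventing a pathological configuration in which every arc of $C$ also belongs to an even cycle one would rather preserve). An appealing alternative is to enumerate the directed cycles $C_1,\dots,C_{\mathfrak{c}}$ in a convenient order --- something akin to an ear decomposition of a $2$-connected digraph, or an ordering by length --- and process them one at a time, subdividing at step $i$ (if $C_i$ is still odd) an arc of $C_i$ that lies in no earlier $C_j$. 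This yields a total of at most $\mathfrak{c}$ subdivisions by construction, and the invariant ``all cycles processed so far are even'' is preserved automatically.

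The main obstacle I anticipate is precisely this combinatorial lemma: either every odd directed cycle $C$ contains an arc whose subdivision leads to a net decrease in odd-cycle count, or the cycles of $D$ admit an ordering in which each cycle possesses a ``fresh'' arc not appearing in any earlier cycle. Proving the existence of such an arc is where the no-cut-vertex hypothesis --- presumably via an argument in the spirit of the dual-matching idea of the undirected warm-up lemma, adapted to the blocks of $D$ --- must do the actual work. Once either form of the lemma is in hand, the bound $\kappa(D)\le\mathfrak{c}$ follows by assembling the subdivisions chosen along the induction or the enumeration.
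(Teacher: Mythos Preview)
Your reduction to Richardson's theorem is exactly right, and you have correctly isolated the real task: exhibit a set $\Lambda$ with $|\Lambda|\le\mathfrak{c}$ whose subdivision makes every directed cycle even. The gap is that neither of the two mechanisms you propose for constructing $\Lambda$ is actually established. For the induction, you would need that every digraph with an odd directed cycle contains an arc lying on at least as many odd directed cycles as even ones; you do not prove this, and there is no evident reason it should hold. For the fresh-arc enumeration, note that the number of directed cycles can be vastly larger than the number of arcs (already in tournaments), so an ordering in which each cycle contributes an arc disjoint from all earlier ones need not exist; restricting attention to cycles that are ``still odd'' at their turn does not obviously rescue this, since you give no argument that such a cycle must contain a fresh arc. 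You flag this yourself as ``the main obstacle'', and it is in fact the entire content of the lemma: what you have written is a plan, not a proof.

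The paper's argument is different and more concrete, and it is essentially the directed version of the dual-matching warm-up you allude to at the end. One forms an auxiliary graph $D^*$ whose vertices are the chordless cycles of $D$ (directed \emph{and} non-directed), with two vertices adjacent when the corresponding cycles share an arc, and $3$-colours them: colour~$1$ for odd directed, colour~$2$ for even directed, colour~$0$ for non-directed. For each colour-$1$ vertex $v$ one selects a path $\gamma_v$ in $D^*$ from $v$ to some vertex of colour~$1$ or~$0$ whose internal vertices all have colour~$2$. The set $\Lambda$ consists of the arcs of $D$ corresponding to the edges of these paths. Each odd directed cycle then receives exactly one subdivided arc, each even directed cycle an even number (twice the number of paths passing through it), and since each path has at most one colour-$0$ endpoint the total is at most $\mathfrak{c}$. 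This path-routing in the cycle graph is the idea your proposal gestures toward but does not carry out.
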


\begin{proof}
    Let $D^*$ denote the graph which has as vertex set the set of all cycles without chords in $D$; both directed and undirected cycles. Two vertices are adjacent in $D^*$ if the corresponding cycles in $D$ share an edge.   We are going to give a colouring  $f: V(D^*)\rightarrow \{0,1,2\}$ of the vertices of $D^*$  as follows: 
    \begin{enumerate}
        \item The vertices corresponding to directed cycles of odd length in $D$  have color $1$,
        \item the vertices corresponding to directed cycles of even length in $D$  have color $2$, and
        \item the vertices corresponding to undirected cycles in $D$  have color $0$.
    \end{enumerate}
    Given a vertex $v$ of color $1$, consider a path $\gamma_v$ with the following properties:
    \begin{itemize}
        \item $\gamma_v$ starts at the vertex $v$,
        \item $\gamma_v$ ends at a vertex having color $1$ or $0$, and
        \item every vertex of $\gamma_v$ which is not an endpoint must have colour two.
    \end{itemize}
    Finally consider the set $\Gamma=\{\gamma_v: f(v)=1\}$ which contains exactly one path for each vertex $v$ of color 1 (here we are allowing $\gamma_v=\gamma_w$ if $w$ and $v$ are the endpoints of the path, both having color $1$), thus $|\Gamma| \leq \{v \in V(D^*): v \mbox{ has color }1 \}$.
    Now subdivide every edge of $D$ which corresponds in $D^*$ to an edge of a path in $\Gamma.$ Since every path has at most one vertex of color $0$, the number of subdivided edges is at most $\mathfrak{c}$. Finally, notice that we subdivided one edge of each odd directed cycle and $2k$ edges of every even cycle where $k$ is the number of paths in $\Gamma$ containing the vertex corresponding to the even cycle. 
\end{proof}
\begin{ejem}
    For example consider the digraph of Figure \ref{cc} (left). We can see in Figure \ref{cc} (right) the digraph $D^{\ast}$ with its vertices colored where black is color two, gray is color one, and white is color zero. The dashed edges correspond to the set $\Gamma$.
\end{ejem}

\begin{figure}[h!]
	\centering
	\begin{tikzpicture}[every node/.style={circle, draw, scale=.6}, scale=1.0, rotate = 180, xscale = -1]

		\node (1) at ( 2.48, 2.3) {};
		\node (2) at ( 4.15, 2.3) {};
		\node (3) at ( 4.15, 3.8) {};
		\node (4) at ( 2.48, 3.8) {};
		\node (5) at ( 3.3, 1.0) {};
		\node (6) at ( 1.15, 3.21) {};
		\node (7) at ( 5.49, 3.19) {};
		\node (8) at ( 3.55, 5.18) {};
		\node[fill=black] (9) at ( 9.0, 3.2) {};
		\node[fill=gray] (10) at ( 9.0, 2.2) {};
		\node[fill=gray] (11) at ( 8.0, 3.2) {};
		\node[fill=gray] (12) at ( 10, 3.2) {};
		\node[fill=gray] (13) at ( 9, 4.2) {};
		\node (14) at ( 10, 2.2) {};
		\node (15) at ( 8, 2.2) {};
		\node (16) at ( 8, 4.2) {};
		\node (17) at ( 10, 4.2) {};

		\draw[->] (1) -- (2);
		\draw[->] (4) -- (1);
		\draw[->] (6) -- (4);
		\draw[->] (3) -- (4);
		\draw[->] (8) -- (3);
		\draw[->] (4) -- (8);
		\draw[->] (2) -- (3);
		\draw[->] (3) -- (7);
		\draw[->] (8) -- (7);
		\draw[->] (6) -- (1);
		\draw[->] (1) -- (5);
		\draw[->] (5) -- (2);
		\draw[->] (2) -- (7);
		\draw[->] (7) -- (5);
		\draw[->] (6) -- (5);
		\draw[->] (8) -- (6);
		\draw (10) -- (9);
		\draw (11) -- (9);
		\draw (12) -- (9);
		\draw (13) -- (9);
		\draw (14) -- (12);
		\draw[dashed] (10) -- (14);
		\draw (15) -- (10);
		\draw[dashed] (11) -- (15);
		\draw (16) -- (11);
		\draw[dashed] (13) -- (16);
		\draw (17) -- (13);
		\draw[dashed] (12) -- (17);

	\end{tikzpicture}
	\caption{The digraphs $D$ and $D^{\ast}$}
	\label{cc}
\end{figure}
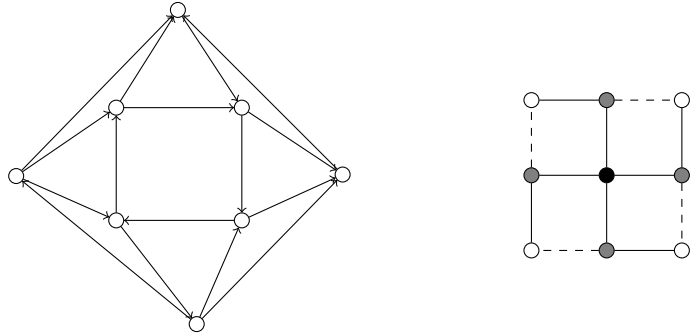

 \begin{teo}
For a digraph $D,$ let $cic(D)$ denote the number of directed cycles without chords.
Then the following inequality holds:
$$0 \leq \kappa(D) \leq cic(D).$$
 \end{teo}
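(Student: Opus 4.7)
The plan is to reduce to Lemma \ref{ciclos} via a block decomposition of $D$. Let $B_1,\dots,B_k$ be the blocks of $D$, i.e.\ the maximal subdigraphs whose underlying graph has no cut vertex. The key structural fact I would use first is that every directed cycle of $D$ lies entirely inside a single block, since its vertex set spans a $2$-connected subgraph of the underlying graph. Consequently every chord-free directed cycle of $D$ sits in a unique $B_i$, giving
\[
cic(D) \;=\; \sum_{i=1}^{k} cic(B_i).
\]

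Next I would apply Lemma \ref{ciclos} to each block $B_i$, which has no cut vertices, to obtain a set $\Lambda_i\subseteq A(B_i)$ with $|\Lambda_i|\le cic(B_i)$ such that $(B_i)_{\Lambda_i}$ has a kernel; examining the proof of Lemma \ref{ciclos}, exactly one arc of each odd chord-free directed cycle is subdivided and an even number of arcs of each even chord-free directed cycle is subdivided. Setting $\Lambda=\bigcup_{i=1}^{k}\Lambda_i$ (a disjoint union, since distinct blocks share no arcs), we immediately obtain $|\Lambda|\le cic(D)$.

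To finish I would argue that $D_\Lambda$ has a kernel. Every directed cycle of $D_\Lambda$ comes from a directed cycle of $D$, which sits in some single block $B_i$, and so corresponds to a directed cycle of $(B_i)_{\Lambda_i}$. Since each $(B_i)_{\Lambda_i}$ has no odd directed cycles, neither does $D_\Lambda$, and Richardson's theorem produces the desired kernel.

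The main obstacle I expect is confirming that the block-local construction of Lemma \ref{ciclos} kills every odd directed cycle of a block, not just the chord-free ones. In the undirected setting this would be immediate since the cycle space is generated by chord-free cycles with parity preserved under symmetric difference; the directed analogue requires a slightly more delicate parity argument, since a chord of a directed cycle need not split it into two directed subcycles. Carrying out this parity bookkeeping cleanly — probably by induction on the number of chords, working inside the $2$-connected block where the necessary splittings are always available — is where the proof will require the most care.
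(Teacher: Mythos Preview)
Your approach matches the paper's exactly: take the blocks (maximal induced subgraphs without cut vertices) and apply Lemma \ref{ciclos} to each. The paper's proof is literally two sentences and supplies none of the bookkeeping you provide.

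The obstacle you flag about chord-carrying odd directed cycles is legitimate, but the paper does not resolve it either. The proof of Lemma \ref{ciclos} only tracks parity for the chord-free cycles (those are the vertices of $D^*$), and the theorem's proof adds nothing beyond invoking the lemma blockwise. So this is not a place where you diverged from the intended argument; it is a gap present in the paper as well, and your suggested induction on the number of chords is a reasonable way one would try to close it.
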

\begin{proof}
    Let $D_1, \dots, D_k$ be the maximal induced subgraphs of $D$ which do not have cut vertices.
    Apply Lemma \ref{ciclos} to every subgraph $D_i.$
\end{proof}

This improves the inequality previously known from Topp's result, but we are going to give both a digraph for which this bound is exact, and a digraph having an arbitrarily large number of directed cycles, and having kernel-subdivision number one. 

Let $cic(D)$ denoted the number of directed cycles in $D$.

\begin{prop}\label{1}
Denote by $R_n$ the digraph having $V(D)= \{x,y,z_1, \dots, z_n\}$ and 
$A(D) = (y,x)\bigcup \limits_{i=1}^n \{(x,z_i), (z_i,y)\}$. Then $\kappa(R_n)=1$ for $n\in\mathbb{N}$,
\end{prop}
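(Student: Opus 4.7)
The plan is to establish both inequalities $\kappa(R_n) \geq 1$ and $\kappa(R_n) \leq 1$.

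For the lower bound, I would argue that $R_n$ itself has no kernel, so that $\kappa(R_n) \geq 1$. The analysis proceeds by case on which vertex types lie in a hypothetical kernel $N$. Because $(y,x)\in A(R_n)$, the vertices $x$ and $y$ cannot both lie in $N$. If $x\in N$, then independence rules out each $z_i$ (since $(x,z_i)\in A(R_n)$) and rules out $y$, so $N=\{x\}$; but then $z_i$ is not absorbed, since the only out-neighbor of $z_i$ is $y\notin N$. If $y\in N$, then independence rules out $x$ and every $z_i$, so $N=\{y\}$; but $x$ is not absorbed, since its only out-neighbors are the $z_i$. Finally, if $N\subseteq\{z_1,\dots,z_n\}$ (and $N\ne\varnothing$), then $y$ is not absorbed because $y$ has no arc to any $z_i$. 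This exhausts the cases.

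For the upper bound, I would exhibit one arc whose subdivision produces a digraph with a kernel. Subdivide the arc $(y,x)$, letting $a$ denote the new vertex, so the resulting digraph $(R_n)_{\{(y,x)\}}$ has arcs
\[
(y,a),\ (a,x),\ (x,z_i),\ (z_i,y) \quad\text{for } i=1,\dots,n.
\]
I claim $N=\{a,z_1,\dots,z_n\}$ is a kernel. Independence is immediate: there are no arcs among the $z_i$, and $a$ has only $(y,a)$ and $(a,x)$ incident to it, neither endpoint of which lies in $N$. Absorbency follows because $x$ has the arc $(x,z_1)\in A$ into $N$, $y$ has the arc $(y,a)\in A$ into $N$, and every other vertex is already in $N$.

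I do not expect a main obstacle here: the argument is a short case analysis plus an explicit construction. The only subtle point is a sanity check that the construction is optimal in the strong sense that no kernel exists in $R_n$ before any subdivision, which is what the case analysis above secures; the rest is verification.
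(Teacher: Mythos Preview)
Your proof is correct and follows essentially the same approach as the paper: show $R_n$ has no kernel (the paper does this tersely by invoking that $C_3$ has no kernel, while you give a full case analysis), then subdivide the arc $(y,x)$ and exhibit a kernel. The only minor difference is the kernel exhibited after subdivision: the paper takes $\{x,y\}$, whereas you take $\{a,z_1,\dots,z_n\}$; both are valid.
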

\begin{proof}
Since $C_{3}$ does not have a kernel, we have that $R_n$ does not have a kernel but by subdividing the arc $(y,x)$, the set $\{x,y\}$ is a kernel of $R_{n_{\Lambda}}$ where $\Lambda=\{(y,x)\}$.

\begin{figure}[h!]
\centering

\begin{tikzpicture}
\node[style={circle, draw, scale=.5}] (x) at (0,0){};
\node[style={circle, draw, scale=.5}] (y) at (2,0){};
\node[style={circle, draw, scale=.5}] (1) at (1,1){};
\node[style={circle, draw, scale=.5}] (2) at (1,2){};
\node[style={circle, draw, scale=.5}] (n) at (1,4){};
\draw[dotted] (2)--(n);
\draw[->] (y)--(x);
\draw[->] (x)--(1);
\draw[->] (x)--(2);
\draw[->] (x)--(n);
\draw[->] (1)--(y);
\draw[->] (2)--(y);
\draw[->] (n)--(y);

\node[style={circle, draw, scale=.5}] (0x) at (4,0){};
\node[style={circle, draw, scale=.5}] (0y) at (6,0){};
\node[style={circle, draw, scale=.5}] (01) at (5,1){};
\node[style={circle, draw, scale=.5}] (02) at (5,2){};
\node[style={circle, draw, scale=.5}] (0n) at (5,4){};
\node[style={circle, draw, scale=.5}] (0z) at (5,0){};
\draw[dotted] (02)--(0n);
\draw[->] (0y)--(0z);
\draw[->] (0z)--(0x);
\draw[->] (0x)--(01);
\draw[->] (0x)--(02);
\draw[->] (0x)--(0n);
\draw[->] (01)--(0y);
\draw[->] (02)--(0y);
\draw[->] (0n)--(0y);
\end{tikzpicture}
\caption{The digraph $R_n$ and $R_{n_{\Lambda}}$ obtained by the subdivision of $\Lambda=\{(y,x)\}$}
    \label{Rn}
 \end{figure}
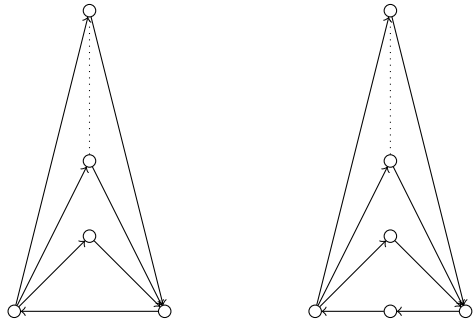

\end{proof}

\begin{prop}\label{s}
Consider the directed star graph $K_{1,n}$ where every edge is oriented towards the leaves. Let $S_n$ be a graph obtained by substituting every leaf of $K_{1,n}$ with an odd directed cycle. 
Then, for every integer $n \geq 2,$ $\kappa(S_n) = cic(S_n)$
\end{prop}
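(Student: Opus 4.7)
The plan is to prove the two inequalities separately. The bound $\kappa(S_n)\le cic(S_n)$ is immediate from the upper bound $\kappa(D)\le cic(D)$ established just before this proposition, because the $n$ odd cycles used to construct $S_n$ are precisely the chord-free directed cycles of $S_n$, so $cic(S_n)=n$.

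What remains is the lower bound $\kappa(S_n)\ge n$. I write $c$ for the centre of the underlying star and, for each $i\in\{1,\dots,n\}$, label the vertices of the odd cycle $C^{(i)}$ replacing the $i$-th leaf as $v_i^{(0)},v_i^{(1)},\dots,v_i^{(2k_i)}$, arranged so that $(c,v_i^{(0)})$ is the unique star-arc incident with $C^{(i)}$ and the cycle arcs are $(v_i^{(j)},v_i^{(j+1)})$, indices reduced modulo $2k_i+1$. I would partition $A(S_n)$ into $n$ blocks, where block $i$ consists of the star-arc $(c,v_i^{(0)})$ together with all $2k_i+1$ arcs of $C^{(i)}$. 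If $\Lambda\subseteq A(S_n)$ satisfies $|\Lambda|\le n-1$, then by pigeonhole some block $i$ is disjoint from $\Lambda$, and for this $i$ every arc of block $i$ survives intact in $S_{n,\Lambda}$.

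Fix such an $i$ and suppose for contradiction that $S_{n,\Lambda}$ admits a kernel $N$. I split on whether $c\in N$. If $c\in N$, independence applied to the intact arc $(c,v_i^{(0)})$ forces $v_i^{(0)}\notin N$, and since $v_i^{(1)}$ is the unique out-neighbour of $v_i^{(0)}$ in $S_{n,\Lambda}$, absorption forces $v_i^{(1)}\in N$. Iterating this independence/absorption alternation around the intact cycle $C^{(i)}$ forces $v_i^{(j)}\in N$ for every odd $j$ and $v_i^{(j)}\notin N$ for every even $j$; but then $v_i^{(2k_i)}\notin N$ has as its only out-neighbour $v_i^{(0)}$, which is also outside $N$, contradicting absorption. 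If instead $c\notin N$, then because block $i$ is untouched the cycle vertices of $C^{(i)}$ retain their original out-neighbours, all of which lie inside $V(C^{(i)})$. Consequently $N\cap V(C^{(i)})$ would have to be an independent absorbing subset of the odd directed cycle $C^{(i)}$, i.e.\ a kernel of $C^{(i)}$, which is impossible by Proposition \ref{ciclo}.

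I expect the casework itself to be routine; the delicate point is the observation used in the case $c\notin N$, namely that absorption of cycle vertices must happen inside $C^{(i)}$, and this is exactly where the hypothesis that block $i$ is disjoint from $\Lambda$ is essential (a subdivided cycle arc could otherwise alter the local absorption structure). Once both cases are ruled out, combining with the upper bound yields $\kappa(S_n)=n=cic(S_n)$.
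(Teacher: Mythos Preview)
Your proof is correct and matches the paper's approach: both hinge on the observation that each odd cycle $C^{(i)}$ is a terminal strongly connected component, so its vertices can only be absorbed inside $C^{(i)}$, whence by Proposition~\ref{ciclo} at least one arc per cycle must be subdivided. Your case split on whether $c\in N$ is harmless but unnecessary---the argument you give in the second case (that $N\cap V(C^{(i)})$ would be a kernel of the intact odd cycle) applies verbatim when $c\in N$, since the out-neighbours of the vertices of $C^{(i)}$ lie inside $C^{(i)}$ regardless of the status of~$c$.
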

\begin{proof}
    Since every odd cycle $\gamma$ is a terminal strongly connected component, there are no possible vertices which can absorb a vertex of $\gamma.$ This means that the vertices of $\gamma$ must absorb each other. Since no two cycles share an edge, we can consider these strongly connected components separately and by Proposition \ref{ciclo}, we have that $\kappa(G)=n.$
\end{proof}

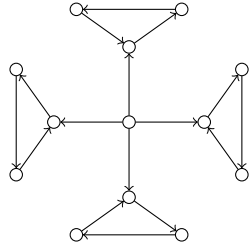
\begin{figure}[h!]
    \centering
  \begin{tikzpicture}  
     \node[style={circle, draw, scale=.5}](0) at (0,0){};
      \node[style={circle, draw, scale=.5}](y) at (1,0){};
      \node[style={circle, draw, scale=.5}](x) at (0,1){};
      \node[style={circle, draw, scale=.5}](z) at (-1,0){};
      \node[style={circle, draw, scale=.5}] (w) at (0,-1){};
      
      \node[style={circle, draw, scale=.5}] (y1) at (1.5,0.7){};
      \node[style={circle, draw, scale=.5}](y2) at (1.5,-0.7){};
      \node[style={circle, draw, scale=.5}] (x1) at (0.7, 1.5){};
      \node[style={circle, draw, scale=.5}] (x2) at (-0.7,1.5){};
      \node[style={circle, draw, scale=.5}] (z1) at (-1.5,0.7){};
      \node[style={circle, draw, scale=.5}](z2) at (-1.5,-0.7){};
      \node[style={circle, draw, scale=.5}] (w1) at (0.7, -1.5){};
      \node[style={circle, draw, scale=.5}] (w2) at (-0.7,-1.5){};
      
\draw[->] (0)--(y);
\draw[->] (0)--(w);
\draw[->] (0)--(x);
\draw[->] (0)--(z);
\draw[->] (x)--(x1);
\draw[->] (x1)--(x2);
\draw[->] (x2)--(x);
\draw[->] (y)--(y1);
\draw[->] (y1)--(y2);
\draw[->] (y2)--(y);
\draw[->] (z)--(z1);
\draw[->] (z1)--(z2);
\draw[->] (z2)--(z);
\draw[->] (w)--(w1);
\draw[->] (w1)--(w2);
\draw[->] (w2)--(w);
  \end{tikzpicture}
    \caption{A digraph $S_4$.}
    \label{Sn}
\end{figure}

Combining the digraphs obtained in propositions $\ref{1}$ and \ref{s} we obtain the following result. 
\begin{teo}
Let $n,m \in \mathbb{N}$ with $n\leq m$ there exists a graph $G$ such that $cic(G)=m$ and $\kappa(G)=n.$
\end{teo}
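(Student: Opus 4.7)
The plan is to combine the two extremal constructions from Propositions \ref{1} and \ref{s} by taking a disjoint union, which lets the cycle-counting and the kernel-subdivision contributions add independently. Given $n \leq m$ with $n \geq 1$, I would set
\[
G \;=\; S_{n-1}\;\sqcup\; R_{m-n+1},
\]
interpreting $S_{0}$ as the empty digraph and $S_{1}$ as a single directed odd cycle (say $C_{3}$, whose kernel-subdivision number is $1$ by Proposition \ref{ciclo}).

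First I would record that both $cic(\cdot)$ and $\kappa(\cdot)$ are additive under disjoint union. Additivity of $cic$ is immediate because every directed cycle lies inside a single connected component. For $\kappa$, the upper bound $\kappa(G_{1} \sqcup G_{2}) \leq \kappa(G_{1}) + \kappa(G_{2})$ follows by taking optimal subdivision sets $\Lambda_{1} \subseteq A(G_{1})$ and $\Lambda_{2} \subseteq A(G_{2})$ and observing that the union of the kernels of $(G_{1})_{\Lambda_{1}}$ and $(G_{2})_{\Lambda_{2}}$ is a kernel of $(G_{1}\sqcup G_{2})_{\Lambda_{1}\cup\Lambda_{2}}$. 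For the reverse inequality, any kernel $K$ of $(G_{1}\sqcup G_{2})_{\Lambda}$ restricts to a kernel on each component, since independence and absorbency are inherited in the absence of arcs between components; hence $\kappa(G_{i}) \leq |\Lambda \cap A(G_{i})|$, and summing gives the bound.

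Then Propositions \ref{1} and \ref{s} give $\kappa(S_{n-1})=n-1$ with $cic(S_{n-1})=n-1$, together with $\kappa(R_{m-n+1})=1$ and $cic(R_{m-n+1})=m-n+1$ (the triangles $x \to z_{i} \to y \to x$ being the only directed cycles, as one checks by tracing the forced successors at $x$, $y$, and each $z_{i}$). Adding these contributions via the additivity statements above yields $cic(G)=m$ and $\kappa(G)=n$, exhibiting the required digraph.

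The main obstacle is really just the additivity of $\kappa$ under disjoint union; once that is in place, the proof is an assembly of the two earlier propositions. A minor amount of bookkeeping is needed for the degenerate cases: for $n=1$ one takes $G=R_{m}$ directly, and for $n=m$ the factor $R_{1}$ reduces to a single directed $3$-cycle (or, alternatively, one may use $G=S_{n}$ by Proposition \ref{s}).
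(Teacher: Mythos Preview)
Your proof is correct, but it differs from the paper's construction. The paper builds a \emph{connected} example: it takes the directed star $K_{1,n}$ oriented towards the leaves and substitutes each leaf $v_i$ by a copy of $R_{m_i}$, where $\sum_{i=1}^{n} m_i = m$. Each $R_{m_i}$ then sits as a terminal strongly connected component, so (as in Proposition~\ref{s}) one subdivision per block is both necessary and sufficient, giving $\kappa(G)=n$ while $cic(G)=\sum m_i=m$.

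Your disjoint-union approach $G=S_{n-1}\sqcup R_{m-n+1}$ is arguably cleaner: once you establish that $\kappa$ is additive over components (which you argue carefully and which the paper never states explicitly), the result is immediate from Propositions~\ref{1} and~\ref{s}. What you lose is connectedness of the example; what you gain is a self-contained verification that does not rely on the reader reconstructing why each $R_{m_i}$ forces exactly one subdivision inside the star. Both arguments ultimately rest on the same two building blocks, just glued differently.
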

\begin{proof}
Let $m_1, \dots, m_n$ be integers such that $\sum\limits_{i=1}^nm_i=m.$ Consider the directed star graph $K_{1,n}$ where each edge is oriented towards the leaves and let $\{v_1, \dots, v_n\}$ be the leaves of $K_{1,n}$. Let $G$ be the digraph obtained from $K_{1,n}$ by substituting the vertex $v_i$ with the graph $R_{m_i}$ for $1\leq i \leq n.$ Then $G$ is the desired graph.
\end{proof}
\begin{ejem}
In Figure \ref{27} we can see a digraph $G$ having $7$ directed cycles of odd length and $\kappa(G)=2.$
\end{ejem}
\begin{figure}[h!]
	\centering
	\begin{tikzpicture}[every node/.style={circle, draw, scale=.5}, scale=1.0, rotate = 180, xscale = -1]

		\node (1) at ( 4.22, 3) {};
		\node (2) at ( 3.4, 2.06) {};
		\node (3) at ( 2.43, 3.0) {};
		\node (4) at ( 6.12, 3) {};
		\node (5) at ( 7.76, 3) {};
		\node (6) at ( 9.61, 3) {};
		\node (7) at ( 8.6, 3.72) {};
		\node (10) at ( 8.6, 1.9) {};
		\node (11) at ( 3.4, 1.01) {};
		\node (12) at ( 3.4, 3.93) {};
		\node (13) at ( 8.6, 1.08) {};
		\node (14) at ( 8.6, 4.68) {};

		\draw[->] (4) -- (5);
		\draw[->] (4) -- (1);
		\draw[->] (6) -- (5);
		\draw[->] (7) -- (6);
		\draw[->] (5) -- (7);
		\draw[->] (3) -- (2);
		\draw[->] (1) -- (3);
		\draw[->] (10) -- (6);
		\draw[->] (2) -- (1);
		\draw[->] (11) -- (1);
		\draw[->] (3) -- (11);
		\draw[->] (3) -- (12);
		\draw[->] (12) -- (1);
		\draw[->] (13) -- (6);
		\draw[->] (5) -- (10);
		\draw[->] (5) -- (13);
		\draw[->] (14) -- (6);
		\draw[->] (5) -- (14);

	\end{tikzpicture}
	\caption{A digraph $G$ containing 7 directed cycles of odd length having $\kappa(G)=2.$ }
	\label{27}
\end{figure}
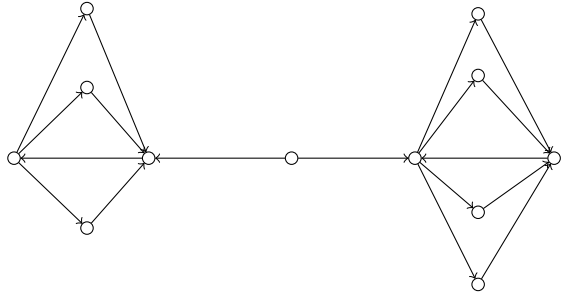

\begin{teo}
Let $G$ be a digraph having $k$ terminal strongly connected components $g_1, \dots, g_k.$
Then $\kappa(G)\geq \sum\limits_{i=1}^k \kappa(g_i).$
\end{teo}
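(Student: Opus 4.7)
The plan is to take any optimal subdivision set for $G$ and restrict it to each terminal strongly connected component, showing that each restriction induces a valid subdivision-with-kernel of that component. Concretely, let $\Lambda \subseteq A(G)$ be a set of arcs with $|\Lambda|=\kappa(G)$ such that $G_\Lambda$ has a kernel $N$. For every terminal strongly connected component $g_i$, define $\Lambda_i = \Lambda \cap A(g_i)$ and $N_i = N \cap V((g_i)_{\Lambda_i})$. The sets $\Lambda_1,\dots,\Lambda_k$ are pairwise disjoint (arcs of distinct components share no vertices), so the goal reduces to proving $|\Lambda_i|\geq \kappa(g_i)$ for each $i$, which follows once we verify that $N_i$ is a kernel of $(g_i)_{\Lambda_i}$.

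Independence of $N_i$ is automatic: two vertices of $N_i$ lie in $N$, which is independent in $G_\Lambda$, so no arc of $(g_i)_{\Lambda_i}\subseteq G_\Lambda$ joins them. The real content is absorbency. Let $u\in V((g_i)_{\Lambda_i})\setminus N_i$. Since $(g_i)_{\Lambda_i}$ embeds into $G_\Lambda$ as a subdigraph and $u\notin N$, there exists $v\in N$ with $(u,v)\in A(G_\Lambda)$. The point is to show $v$ actually belongs to $V((g_i)_{\Lambda_i})$, which is where the terminality of $g_i$ is used.

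I would split into the two cases for $u$. If $u\in V(g_i)$ and the arc $(u,v)\in A(G_\Lambda)$ is an original (non-subdivided) arc of $G$, then terminality of $g_i$ forces $v\in V(g_i)\subseteq V((g_i)_{\Lambda_i})$. If instead $v$ is a subdivision vertex introduced for some arc $(u,w)\in\Lambda$, then $w\in V(g_i)$ by terminality, so $(u,w)\in A(g_i)\cap\Lambda=\Lambda_i$ and thus $v\in V((g_i)_{\Lambda_i})$. If $u$ is itself a subdivision vertex of some arc $(x,y)\in \Lambda_i$, then its unique out-neighbor in $G_\Lambda$ is $y\in V(g_i)$, so again $v=y\in V((g_i)_{\Lambda_i})$. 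In every case $v\in N_i$, proving absorbency.

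With $N_i$ a kernel of $(g_i)_{\Lambda_i}$, the definition of $\kappa$ gives $|\Lambda_i|\geq \kappa(g_i)$. Since the $\Lambda_i$ are disjoint subsets of $\Lambda$, summing yields
\[
\kappa(G)=|\Lambda|\geq \sum_{i=1}^k |\Lambda_i|\geq \sum_{i=1}^k \kappa(g_i).
\]
I expect the only delicate step to be the bookkeeping in the absorbency argument above: one must be careful to cover all sub-cases of what $u$ and $v$ can be (original vertex versus subdivision vertex) and to use terminality of $g_i$ in each to force the absorbing vertex to remain inside the component.
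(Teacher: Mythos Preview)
The paper states this theorem without proof; it appears immediately before the subsection ``The kernel subdivision number of certain families of graphs'' and no argument is given. Your proposal therefore cannot be compared line-by-line against the paper's proof, but it is correct and is the natural argument one would expect: restrict an optimal subdivision set and the resulting kernel to each terminal component, use terminality to show nothing escapes, and sum. The case analysis for absorbency (original vertex absorbed by an original vertex, original vertex absorbed by a subdivision vertex, subdivision vertex with its unique out-neighbour) is complete, and the disjointness of the $\Lambda_i$ follows since distinct strongly connected components are vertex-disjoint. Your proof fills the gap the paper leaves.
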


\subsection{The kernel subdivision number of certain families of graphs}
In this section we shall analyze various families of digraphs, all of which have large amounts of directed cycles. We shall start studying the kernel subdivision number for a family of digraphs called tournaments (an orientation of the complete graph). Given a digraph $D$, \textit{$\alpha(D)$} denotes the minimal cardinality of an absorbent set for $V(D)$.

\begin{teo}
Let $T$ be a tournament. Then, $$\kappa(T)= \sum \limits_{i=1}^{\alpha(T)-1}i.$$
\end{teo}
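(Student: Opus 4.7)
The plan is to prove $\kappa(T)=\binom{\alpha(T)}{2}$ by establishing the matching bounds $\kappa(T)\le\binom{\alpha(T)}{2}$ and $\kappa(T)\ge\binom{\alpha(T)}{2}$.

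For the upper bound, I would fix a minimum absorbing set $A\subseteq V(T)$ with $|A|=\alpha(T)$ and take $\Lambda$ to be the set of arcs of $T$ whose both endpoints lie in $A$. Because $T$ is a tournament, there is exactly one such arc per unordered pair, so $|\Lambda|=\binom{\alpha(T)}{2}$. In $T_\Lambda$ the set $A$ is independent, since every $A$-$A$ arc has been replaced by a length-two path through a new subdivision vertex, and $A$ remains absorbing: every $x\in V(T)\setminus A$ still has its original unsubdivided arc into $A$, and each new subdivision vertex $a_{uv}$ (with $u,v\in A$) has its unique outgoing arc going to $v\in A$. Hence $A$ is a kernel of $T_\Lambda$, giving $\kappa(T)\le\binom{\alpha(T)}{2}$.

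For the lower bound, I would take $\Lambda$ realising $\kappa(T)$ and a kernel $K$ of $T_\Lambda$, and decompose $K=K_0\sqcup K_1$ with $K_0=K\cap V(T)$ and $K_1=K\cap\Lambda$. Independence in $T_\Lambda$ yields two structural facts: every arc of $T$ between two vertices of $K_0$ lies in $\Lambda$ (already contributing $\binom{|K_0|}{2}$ arcs), and both endpoints of any $a_{uv}\in K_1$ lie outside $K_0$. Call $v\in V(T)\setminus K_0$ \emph{directly absorbed} if it has an unsubdivided arc of $T$ to $K_0$ and \emph{indirectly absorbed} otherwise; in the latter case the absorption in $T_\Lambda$ must come through some arc $(v,w)\in\Lambda$ with $a_{vw}\in K_1$. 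Write $V_{\mathrm{ind}}$ for the set of indirectly absorbed vertices. By the very definition, for every $v\in V_{\mathrm{ind}}$ every arc $(v,y)\in A(T)$ with $y\in K_0$ must lie in $\Lambda$; and the set $B=K_0\cup V_{\mathrm{ind}}$ is absorbing in $T$, so $|B|\ge\alpha(T)$.

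Collecting three pairwise disjoint sources of subdivided arcs yields
\[
|\Lambda|\ \ge\ \binom{|K_0|}{2}\ +\ |K_1|\ +\ \sum_{v\in V_{\mathrm{ind}}}\bigl|N_T^+(v)\cap K_0\bigr|,
\]
together with the side conditions $|K_1|\ge|V_{\mathrm{ind}}|$ and $|K_0|+|V_{\mathrm{ind}}|\ge\alpha(T)$. The main obstacle is that these constraints alone do not force the right-hand side up to $\binom{\alpha(T)}{2}$ when $|K_0|$ is strictly less than $\alpha(T)$; the tournament property of $T$ must enter here. I would finish with a case analysis on $|K_0|$ exploiting the fact that $T$ has no absorbing set of size smaller than $\alpha(T)$: if, for a particular $v\in V_{\mathrm{ind}}$, the quantity $|N_T^+(v)\cap K_0|$ were too small, then the arcs of $T$ between $K_0$ and $V_{\mathrm{ind}}$ would be oriented in such a way that a strict subset of $B$ (obtained by dropping a suitable element of $K_0$ and keeping the remaining $V_{\mathrm{ind}}$-vertices that dominate it in $T$) is itself absorbing, contradicting $|B|\ge\alpha(T)$ being tight. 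The careful bookkeeping of these cases and the corresponding swaps is the technical heart of the argument.
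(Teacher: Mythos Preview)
Your upper bound is correct and coincides with the paper's argument. The lower bound, however, has a genuine gap that cannot be closed, because the claimed equality is in fact false. Take the Paley tournament $T$ on $\mathbb{Z}_7$, with $i\to j$ whenever $j-i\in\{1,2,4\}$. No two-element set is absorbing (for instance $\{0,1\}$ fails to absorb vertex~$2$, since $2-0$ and $2-1$ both lie in $\{1,2,4\}$), while $\{0,1,2\}$ is absorbing; hence $\alpha(T)=3$ and the theorem predicts $\kappa(T)=\binom{3}{2}=3$. But subdividing only the two arcs in $\Lambda=\{(0,1),(2,3)\}$ already produces a kernel: the set $N=\{0,1,a_{23}\}$ is independent (the unique $T$-arc between $0$ and $1$ has been subdivided, and the only neighbours of $a_{23}$ in $T_\Lambda$ are $2$ and $3$), and it is absorbing ($2\to a_{23}$; $3,5,6\to 0$; $4\to 1$; $a_{01}\to 1$). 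Thus $\kappa(T)\le 2<3$. Your own observation that the collected inequalities ``do not force the right-hand side up to $\binom{\alpha(T)}{2}$'' was exactly right; no swap-based case analysis of the kind you sketch can succeed, because in this example $|K_0|=2$, $V_{\mathrm{ind}}=\{2\}$, and the single indirectly absorbed vertex contributes nothing to $\sum_v|N_T^+(v)\cap K_0|$.

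For comparison, the paper's own lower-bound argument is also defective. It lets $I$ be the set of endpoints of the arcs in $\Lambda'$ and then asserts that the induced subtournament $T_0=T[I]$ satisfies $A(T_0)=\Lambda'$; this is false in general (in the example above $I=\{0,1,2,3\}$, so $|A(T_0)|=6\neq 2=|\Lambda'|$), and the remainder of that argument never invokes the actual kernel of $T_{\Lambda'}$. Only the upper bound $\kappa(T)\le\binom{\alpha(T)}{2}$ survives.
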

\begin{proof}
Let $K\subseteq V(T)$ be an  absorbent set of size $\alpha(T)$. If $\alpha(T)=1$ we have that that this set is also an independent set. Then $\kappa(T)=0.$ Thus we can assume that  $\alpha(T)\geq 2.$ 

Notice that, $T'$ the induced subdigraph spanned by $K$ is again a tournament, thus it has $\sum\limits_{i=1}^{\alpha(T)-1}i$ arcs, let $\Lambda=A(T')$. If we subdivide every arc of $T'$, we obtain that $K$ is a kernel in $T_{\Lambda}$, thus $\kappa(T) \leq \sum \limits_{i=1}^{\alpha(T)-1}i .$

Now, suppose that $\kappa (T) < \sum \limits_{i=1}^{\alpha(T)-1}i.$ Then there exists $\Lambda'\subseteq A(T)$ such that $\kappa (T)=|\Lambda'|.$ Let $I \subseteq V(T)$ such that $x,y\in I$ if and only if $(x,y)$ or $(y,x) $ is in $\Lambda'$. Let $T_{0}$ be the induced subdigraph spanned by $I,$  since $T$ is a tournament we have that $A(T_{0})=\Lambda'.$  Recall that $K$ has minimal cardinality, so  if $I$ is an absorbent set then $|I|\geq|K|.$ Therefore $|\Lambda'|\geq \sum \limits_{i=1}^{\alpha(T)-1}i$ which is a contradiction. 

So, let us suppose that $I$ is not an absorbent set, then there exists  $I'\subseteq V(T)$ such that $I\cup I'$ is an absorbent set in $T$. Let $T_{1}$ be the induced subdigraph spanned by $I\cup I'$ and $\Lambda'=A(T_{1}).$ Notice that $I\cup I'$ is a kernel in $T_{\Lambda'}.$ Since $T$ is a tournament, there exist $x\in I$ and $y\in I'$ such that $(x,y)$ or $(y,x)$ is an element of $A(T)$ but it is not an element of $\Lambda'.$ This means that $I\cup I'$ is not an independent set in $T_{\Lambda'}$ which is a contradiction. Therefore $\kappa (T)> |\Lambda'|,$ since $K$ has minimal cardinality, we have that $|I\cup I'|\geq |K|$  and hence $\kappa (T)\geq \sum \limits_{i=1}^{\alpha(T)-1}i$ which is a contradiction. 

Thus, $\kappa(T)= \sum \limits_{i=1}^{\alpha(T)-1}i.$
 
\end{proof}

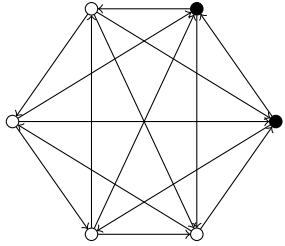
\begin{figure}[h!]
    \centering
   \begin{tikzpicture}
       \node[circle, draw, scale=.5] (0) at (-.25,0){};
       \node[circle, draw, scale=.5] (1) at (.8,1.5){};
       \node[circle, draw, scale=.5, fill=black] (2) at (2.2,1.5){};
       \node[circle, draw, scale=.5, fill=black] (4) at (3.25,0){};
       \node[circle, draw, scale=.5] (5) at (2.2,-1.5){};
       \node[circle, draw, scale=.5] (6) at (.8,-1.5){};
       \draw[->] (1)to(0);
       \draw[->] (0)to(6);
       \draw[->] (6)to(5);
       \draw[->] (5)to(4);
       \draw[->] (4)to(2);
       \draw[->] (2)to(1);
       \draw[->] (0)to(2);
       \draw[->] (1)to(4);
       \draw[->] (2)to(5);
       \draw[->] (4)to(6);
       \draw[->] (5)to(0);
       \draw[->] (0)to(4);
       \draw[->] (6)to(2);
       \draw[->] (1)to(5);
       \draw[->] (6)to(1);
   \end{tikzpicture}
    \caption{A tournament with six vertices having kernel subdivision number one}
    %\label{fig:enter-label}
\end{figure}

Consider the triangular directed grid  $T(m,n)$ graph defined as follows
$$V(T(m,n)))=\bigcup\limits_{i=1}^m\bigcup\limits_{j=1}^n x_i^j$$
\begin{align*}
    A(T(m,n)) =& \bigcup \limits_{j=1}^n \bigcup \limits_{i=1}^{m-1} (x_i^j,x_{i+1}^j)\cup \\
    &\bigcup \limits_{j=1}^{n-1} (\bigcup \limits_{i=1}^{m-1} \{(x_i^{j+1},x_i^j), (x_i^j,x_{i-1}^{j+1}\}\cup (x_m^j, x_m^{j+1}))
\end{align*}
For example, in figures \ref{73} and \ref{72} we can see the digraphs $T(7,3)$ and $T(7,2)$ respectively.

\begin{figure}[h!]
    \centering
\begin{tikzpicture}
\node[circle, draw, scale=.4] (11) at (0,0){};
\node[circle, draw, scale=.4, fill=black] (12) at (1,0){};
\node[circle, draw, scale=.4] (13) at (2,0){};
\node[circle, draw, scale=.4, fill=black] (14) at (3,0){};
\node[circle, draw, scale=.4] (15) at (4,0){};
\node[circle, draw, scale=.4, fill=black] (16) at (5,0){};
\node[circle, draw, scale=.4, fill=black] (17) at (6,0){};
\draw[->] (11)--(12);
\draw[->] (12)--(13);
\draw[->] (13)--(14);
\draw[->] (14)--(15);
\draw[->] (15)--(16);
\draw[->] (16)--(17);

\node[circle, draw, scale=.4] (21) at (1,1){};
\node[circle, draw, scale=.4] (22) at (2,1){};
\node[circle, draw, scale=.4] (23) at (3,1){};
\node[circle, draw, scale=.4] (24) at (4,1){};
\node[circle, draw, scale=.4] (25) at (5,1){};
\node[circle, draw, scale=.4] (26) at (6,1){};
\node[circle, draw, scale=.4] (27) at (7,1){};
\draw[->] (21)--(22);
\draw[->] (22)--(23);
\draw[->] (23)--(24);
\draw[->] (24)--(25);
\draw[->] (25)--(26);
\draw[->] (26)--(27);

\node[circle, draw, scale=.4, fill=black] (31) at (0,2){};
\node[circle, draw, scale=.4] (32) at (1,2){};
\node[circle, draw, scale=.4, fill=black] (33) at (2,2){};
\node[circle, draw, scale=.4] (34) at (3,2){};
\node[circle, draw, scale=.4,fill=black] (35) at (4,2){};
\node[circle, draw, scale=.4] (36) at (5,2){};
\node[circle, draw, scale=.4,fill=black] (37) at (6,2){};
\draw[->] (31)--(32);
\draw[->] (32)--(33);
\draw[->] (33)--(34);
\draw[->] (34)--(35);
\draw[->] (35)--(36);
\draw[->] (36)--(37);

\draw[->] (21)--(11);
\draw[->] (22)--(12);
\draw[->] (23)--(13);
\draw[->] (24)--(14);
\draw[->] (25)--(15);
\draw[->] (26)--(16);
\draw[->] (27)--(17);

\draw[<-] (31)--(21);
\draw[<-] (32)--(22);
\draw[<-] (33)--(23);
\draw[<-] (34)--(24);
\draw[<-] (35)--(25);
\draw[<-] (36)--(26);
\draw[<-] (37)--(27);

\draw[->] (12)--(21);
\draw[->] (13)--(22);
\draw[->] (14)--(23);
\draw[->] (15)--(24);
\draw[->] (16)--(25);
\draw[->] (17)--(26);

\draw[->] (32)--(21);
\draw[->] (33)--(22);
\draw[->] (34)--(23);
\draw[->] (35)--(24);
\draw[->] (36)--(25);
\draw[->] (37)--(26);
\end{tikzpicture}
    \caption{The triangular grid graph $T(7,3)$ with $\kappa(T(7,3))=1$}
    \label{73}
\end{figure}
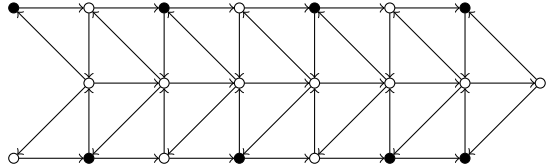
\begin{prop}\label{T(n,3)}
    For every $n>1,m=3,5$ $\kappa(T(n,m))=1$  
\end{prop}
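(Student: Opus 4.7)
The plan is to prove both $\kappa(T(n,m))\geq 1$ and $\kappa(T(n,m))\leq 1$ for $m\in\{3,5\}$.

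For the lower bound I would argue that $T(n,m)$ itself has no kernel. The key observation is that the rightmost column is highly constrained: the corner vertices $x_n^1$ and $x_n^m$ each have essentially a single diagonal out-arc into column $n-1$, and the remaining vertices of the rightmost column also send all their out-arcs into column $n-1$. Assuming a kernel $K$, I perform a case analysis on which vertices of the rightmost column lie in $K$. Propagating the independence condition (no arc between kernel vertices) and the absorbency condition (every non-kernel vertex has an out-neighbour in $K$) inward from the corners always leaves some vertex on the right boundary without an out-neighbour in $K$, or forces two adjacent vertices of the same row to lie in $K$; either conclusion contradicts $K$ being a kernel. The warm-up case $T(2,3)$ already exhibits the pattern, and the argument for general $n$ (and for $m=5$) proceeds by the same propagation, with a few more sub-cases.

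For the upper bound I would construct a kernel explicitly after one subdivision. The construction places kernel vertices only in the odd-indexed rows, namely rows $1$ and $3$ for $m=3$, and rows $1,3,5$ for $m=5$. Choose column parities so that each column is covered by at least one odd-row kernel vertex (which then absorbs the even-row vertices of that column through their diagonal out-arcs), and so that within each odd row the kernel vertices are pairwise non-adjacent. The rightmost column requires an adjustment: a corner vertex with no horizontal out-neighbour is forced into the kernel, and adding it creates exactly one clash with the alternating pattern in one of the odd rows; subdividing that single boundary arc removes the clash without disturbing absorbency. Concretely, for $m=3$ with $n$ odd I would subdivide $(x_{n-1}^1,x_n^1)$ and take the kernel $\{x_{2k}^1:1\leq k\leq (n-1)/2\}\cup\{x_n^1\}\cup\{x_{2k-1}^3:1\leq k\leq (n+1)/2\}$; for $n$ even the roles of rows $1$ and $3$ are swapped and the subdivided arc is $(x_{n-1}^3,x_n^3)$. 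For $m=5$ the outer rows $1$ and $5$ take the parity of $n$ (so that $x_n^1$ and $x_n^5$ are included automatically), row $3$ takes the opposite parity with $x_n^3$ adjoined, and the single subdivision is always $(x_{n-1}^3,x_n^3)$.

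The main obstacle will be the parity-sensitive bookkeeping needed to verify the kernel conditions in all sub-cases: checking absorbency of every vertex in row $2$ (and row $4$ when $m=5$) through its diagonal out-arcs into the adjacent odd rows of its column; checking absorbency of each non-kernel odd-row vertex via the horizontal arcs; and checking independence within each odd row (noting that rows $1,3,5$ are pairwise non-adjacent through direct arcs, so cross-row independence is automatic). The lower-bound case analysis at the rightmost column grows modestly with $m$, since for $m=5$ the column has five vertices rather than three, but the core strategy of tracing constraints inward from the corners is identical.
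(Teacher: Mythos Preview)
Your proposal is correct and follows essentially the same approach as the paper: a short case analysis on the last column to show $T(n,m)$ has no kernel, and an explicit kernel supported on the odd rows (with one alternating parity per row and the rightmost corner forced in) after subdividing a single boundary arc. The only cosmetic differences are that the paper writes a single uniform formula for the kernel rather than splitting on the parity of $n$, always subdivides the arc in row~$3$, and for $m=5$ observes that $T(n,3)$ sits inside $T(n,5)$ as an induced subdigraph so that the same last-column contradiction carries over.
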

\begin{proof}
    Assume that $T(n,3)$ has a kernel $N.$ Notice that at least one of the vertices $x_1^n, x_2^n,x_3^n$ belongs to $N.$ If both $x_1^n$ and $x_3^n$ belong to $N$ then there cannot be a vertex in $N$ which absorbs $x_2^{n-1}.$ If $x_2^n\in N$ then there cannot be a vertex in $N$ which absorbs $x_1^n$ and $x_3^n.$ If $x_1^n\in N$ there is no vertex in $N$ which absorbs $x_3^n$ and vice versa. This is a contradiction thus $\kappa(T(n,3))\geq 1.$
    Now consider the set $$N=\{ x_1^n, x_1^{n-2}, \dots, x_1^{n-2 \lfloor \frac{n}{2} \rfloor}, x_3^n, x_3^{n-1}, x_3^{n-3}, \dots, x_3^{n-1-2 \lfloor \frac{n}{2} \rfloor}\}.$$
Notice that $N$ is a kernel for $T_{\Lambda}(n,3)$ obtained from $T(n,3)$ by subdividing the edge in $\Lambda=\{(x_3^n, x_3^{n-1})\}$ thus $\kappa (T(n,3))=1.$
\\

By construction we have that $T(n,3)$ is an induced subdigraph of $T(n,5).$ Thus, we can follow a similar procedure as in the previous case in order to obtain that $\kappa(T(n,5))\geq 1.$ 
Now consider the set $N=\{ x_1^n, x_1^{n-2}, \dots,$ $ x_1^{n-2 \lfloor \frac{n}{2} \rfloor},x_3^n, x_3^{n-1},$ $x_3^{n-3}, \dots, x_3^{n-1-2 \lfloor \frac{n}{2} \rfloor},  x_5^n, x_5^{n-2}, \dots, x_5^{n-2 \lfloor \frac{n}{2} \rfloor}\},$  it is easy to verify that $N$ is a kernel of $T_{\Lambda}(n,5)$ obtained by subdividing the edge in $\Lambda=\{(x_3^n, x_3^{n-1})\}$ thus $\kappa (T(n,5))=1.$
\end{proof}

\begin{prop}\label{t24}
    For every $n>1,$ $\kappa(T(n,m))=\lceil \frac{n}{2} \rceil $ for $m=2,4$.
\end{prop}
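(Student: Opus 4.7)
The proof will follow the same two-part pattern as Proposition~\ref{T(n,3)}: I first establish the upper bound by an explicit construction, and then establish the lower bound by contradiction.

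For the upper bound $\kappa(T(n,m))\le \lceil n/2\rceil$, I would exhibit a set $\Lambda\subset A(T(n,m))$ of cardinality $\lceil n/2\rceil$ together with a kernel of $T(n,m)_{\Lambda}$. Group the $n$ columns of $T(n,m)$ into $\lceil n/2\rceil$ consecutive pairs (with a singleton at the right end when $n$ is odd). In each pair select one ``internal" arc to subdivide; a natural candidate for $m=2$ is an arc of the form $(x_{2k}^1,x_{2k-1}^2)$, and for $m=4$ the symmetric choice using both outer rows. The proposed kernel $N$ then consists of (i) the new subdivision vertices, and (ii) a sparse selection from the last column and from the outer rows, following the pattern $N=\{x_1^n,x_1^{n-2},\dots,x_m^n,x_m^{n-1},x_m^{n-3},\dots\}$ that was used in the proof of Proposition~\ref{T(n,3)}. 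Independence of $N$ is checked arc by arc (none of the chosen vertices are endpoints of a common arc in $T(n,m)_{\Lambda}$); absorbency follows by walking column by column from right to left and verifying that every remaining vertex has an out-arc into $N$, where each subdivision vertex $w$ absorbs the tail of its subdivided arc.

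For the lower bound $\kappa(T(n,m))\ge \lceil n/2\rceil$, I argue by contradiction. Assume $|\Lambda|<\lceil n/2\rceil$ and $N$ is a kernel of $T(n,m)_{\Lambda}$. Partition the columns into the same $\lceil n/2\rceil$ consecutive pairs as above; by pigeonhole there is a pair of columns which contains no arc of $\Lambda$. On those two columns the digraph is isomorphic to $T(2,m)$ (or to a ``rightmost" variant of it), so the analogous argument to the one that opens the proof of Proposition~\ref{T(n,3)} applies: case-splitting on which of the three (for $m=2$: two) vertices of the relevant column lie in $N$ yields, in each case, a vertex of the local block which has no out-neighbour in $N$. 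This is the desired contradiction, because within that unsubdivided pair of columns a subdivision vertex from elsewhere in $\Lambda$ cannot reach into the block to help.

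The hard part will be the lower bound. The delicate point is not that $T(2,m)$ has no kernel in isolation, but that the ``pasting'' of an unsubdivided pair of columns into the larger digraph cannot be rescued by subdivision vertices living outside the pair. Making this precise requires a localization lemma: a subdivision vertex $w$ placed on an arc $(u,v)$ can contribute to absorbency only of $u$ itself, so if $\Lambda$ avoids every arc incident to a whole block of columns, then every vertex in that block must be absorbed within the block, reducing the question to kernel existence in $T(2,m)$ proper. The argument for $m=4$ carries an additional layer of casework because the two middle rows offer more absorbency options and more ways to place kernel vertices.
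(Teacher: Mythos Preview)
Your lower-bound argument has a genuine gap. The localisation lemma you propose --- ``if $\Lambda$ avoids every arc incident to a block of columns then every vertex of that block must be absorbed within the block'' --- is false. Ordinary (non-subdivision) kernel vertices lying in the columns \emph{adjacent} to the block can absorb boundary vertices of the block through original arcs of $T(n,m)$. Concretely, for $m=2$ the vertex $x^{1}_{2k}$ has the out-arc $(x^{1}_{2k},x^{1}_{2k+1})$ leaving the pair $\{2k-1,2k\}$ on the right, and $x^{1}_{2k-1}$ has the out-arc $(x^{1}_{2k-1},x^{2}_{2k-2})$ leaving on the left; if either target happens to lie in the kernel, the pair is ``rescued'' without any subdivision touching it. So the reduction to ``$T(2,m)$ in isolation has no kernel'' does not go through. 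There is also a second, smaller gap: your pigeonhole only yields a pair \emph{containing} no arc of $\Lambda$, which is strictly weaker than ``$\Lambda$ avoids every arc \emph{incident} to the pair'', since arcs straddling the boundary of the pair might still lie in $\Lambda$.

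The paper's lower bound is global rather than local, and this is not an accident: the obstruction to a kernel is not one bad two-column block but a set of roughly $n/2$ vertices spaced two columns apart. One observes that the restriction of any kernel of $T(n,m)_{\Lambda}$ to $V(T(n,m))$ is an independent set, and then compares the possible maximal independent sets of $T(n,m)$. The one lying entirely in row~$2$ (rows~$2$ and~$4$ when $m=4$) absorbs the most vertices, yet still leaves an independent set $J$ of size $\lceil n/2\rceil\cdot\frac{m}{2}$ unabsorbed; for $m=2$ these are the vertices $x^{1}_{1},x^{1}_{3},x^{1}_{5},\dots$, no two of which share a common out-neighbour, so each forces its own subdivision. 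Any other maximal independent set (necessarily using two rows, as illustrated in Figure~\ref{72}) absorbs strictly fewer vertices and needs at least as many subdivisions. Your column-pair pigeonhole cannot see this structure, and I do not see how to repair the local argument without essentially reverting to the global count of unabsorbed vertices.
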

\begin{proof}
    Take $I$ to be a maximal independent set contained in the second row if $m=2$, or a maximal independent set contained in the second and fourth row if $m=4$. Then there exists $J\subset V(T(n,m))$ such that:
    \begin{itemize}
        \item $J$ is an independent set with $|J|=\lceil \frac{n}{2}\rceil\frac{m}{2}  $,
        \item $I$ absorbs $V(G)\setminus J,$ and
        \item  there exists an edge $e$ such that $I$ is a kernel for $T(n,m)_e$
        
    \end{itemize}
   This means that $\kappa(T(n,m)) \leq \lceil \frac{n}{2} \rceil. $
    Any other maximal independent set $I'$ contains vertices from two consecutive rows and in particular $I'$ absorbs less vertices as $I.$ The vertices which are not absorbed by $I'$, form again an independent set. Notice that since $l-1$ of the $l$ vertices which are not absorbed by $I'$ can be put into pairs such that each pair has a common outer neighbour, the number of edges which need to be subdivided in order to extend $I'$ to a kernel is  $\lceil \frac{l-1}{2} \rceil+1> 1.$ (see fore example Figure \ref{72} where the black vertices form the set $I'$ for $n=7$). Hence $\kappa(T(n,m))=\lceil \frac{n}{2} \rceil $. 
\end{proof}

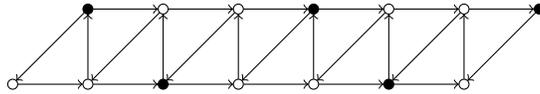
\begin{figure}[h!]
    \centering
\begin{tikzpicture}
\node[circle, draw, scale=.4] (11) at (0,0){};
\node[circle, draw, scale=.4, ] (12) at (1,0){};
\node[circle, draw, scale=.4, fill=black] (13) at (2,0){};
\node[circle, draw, scale=.4,] (14) at (3,0){};
\node[circle, draw, scale=.4] (15) at (4,0){};
\node[circle, draw, scale=.4, fill=black] (16) at (5,0){};
\node[circle, draw, scale=.4, ] (17) at (6,0){};
\draw[->] (11)--(12);
\draw[->] (12)--(13);
\draw[->] (13)--(14);
\draw[->] (14)--(15);
\draw[->] (15)--(16);
\draw[->] (16)--(17);

\node[circle, draw, scale=.4, fill=black] (21) at (1,1){};
\node[circle, draw, scale=.4] (22) at (2,1){};
\node[circle, draw, scale=.4] (23) at (3,1){};
\node[circle, draw, scale=.4,fill=black] (24) at (4,1){};
\node[circle, draw, scale=.4] (25) at (5,1){};
\node[circle, draw, scale=.4] (26) at (6,1){};
\node[circle, draw, scale=.4,fill=black] (27) at (7,1){};
\draw[->] (21)--(22);
\draw[->] (22)--(23);
\draw[->] (23)--(24);
\draw[->] (24)--(25);
\draw[->] (25)--(26);
\draw[->] (26)--(27);
\draw[->] (12)--(21);
\draw[->] (13)--(22);
\draw[->] (14)--(23);
\draw[->] (15)--(24);
\draw[->] (16)--(25);
\draw[->] (17)--(26);
\draw[->] (21)--(11);
\draw[->] (22)--(12);
\draw[->] (23)--(13);
\draw[->] (24)--(14);
\draw[->] (25)--(15);
\draw[->] (26)--(16);
\draw[->] (27)--(17);
\end{tikzpicture}
\caption{The digraph $T(7,2)$.}\label{72}
\end{figure}

\begin{teo}
For $m\geq 3,n>1$,  we have $$\kappa(T(n,m))= \left\{ \begin{array}{cc}
    k & \mbox{ if } m=4k\pm1 \\
    k + \lceil \frac{n}{2} \rceil & \mbox{ if } m=4k \text{ or }m=4k+2 \\
  
\end{array} \right.$$
\end{teo}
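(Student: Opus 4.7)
The plan is to split the proof into two main cases according to the parity of $m$, and within each case to provide both an upper bound (by explicit construction) and a lower bound (by a forcing argument). The natural inductive parameter is $k$, with Propositions \ref{T(n,3)} and \ref{t24} serving as the base cases.

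For the upper bound, I would partition the rows of $T(n,m)$ into horizontal bands. In the odd case $m = 4k \pm 1$, the grid is partitioned into $k$ bands, each isomorphic to $T(n,3)$ or $T(n,5)$, where consecutive bands share exactly one row. Inside each band I apply the construction of Proposition \ref{T(n,3)}: a single subdivision, placed near the top-right of the band analogously to the edge $(x_3^n, x_3^{n-1})$, yields an independent absorbing subset whose top row serves as the bottom row of the next band. Verifying that the subsets on consecutive bands agree on the shared row, the union gives a kernel of $T(n,m)_{\Lambda}$ with $|\Lambda| = k$. In the even case $m = 4k$ or $4k+2$, I apply the odd-case construction to rows $1,\ldots, m-1$ or $1,\ldots,m-2$, contributing $k$ subdivisions, and then handle the remaining top strip of $1$ or $2$ rows exactly as in Proposition \ref{t24}, contributing the additional $\lceil n/2 \rceil$ subdivisions.

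For the lower bound, I would generalize the case analysis at the start of the proof of Proposition \ref{T(n,3)}. The key observation is that the three vertices in the top-right corner of every horizontal block of four consecutive rows play the same role as $x_1^n,x_2^n,x_3^n$ do for $m=3$: at least one of them must lie in the kernel, and each of the ways of placing kernel vertices among them forces at least one arc of $\Lambda$ inside that block. Iterating the argument downward, each successive block of four rows behaves like a smaller triangular subgrid whose top row is already committed, contributing one forced subdivision. Summing gives the $k$ lower bound for the odd case. For the even case, I would argue that the top one or two rows form a substructure isomorphic to $T(n,2)$ or $T(n,4)$ whose absorbence constraints cannot be satisfied using the subdivisions already spent on the bands below, so Proposition \ref{t24} contributes the extra $\lceil n/2 \rceil$.

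The hardest part will be the compatibility check in the upper bound (ensuring that the shared row between consecutive bands simultaneously meets the independence and absorbence requirements of both bands' kernel patterns, especially at the vertical boundary columns), and the no-sharing argument in the lower bound (ruling out ``clever'' subdivisions that simultaneously serve two adjacent blocks or serve both a band and the top strip). A further subtle point is that the orientation of the arcs at the top boundary of $T(n,m)$ is not the same as in an interior slice, so the even-case top strip needs its own bespoke treatment rather than a direct appeal to Proposition \ref{t24}.
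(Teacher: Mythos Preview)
Your overall architecture---induction on $k$ with Propositions \ref{T(n,3)} and \ref{t24} as base cases, peeling off four rows at a time---matches the paper's. The two accounts differ mainly in where the weight falls.

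For the \emph{lower bound}, the paper does not carry out a direct corner-forcing argument block by block as you propose. Instead it takes an arbitrary kernel $K$ of $T(n,m)_\Lambda$, lets $L$ be the part of $K$ in the last three rows, and removes the closed in-neighbourhood $N^{-}[L]$. What remains is (a subdivision of) $T(n,m-4)$ decorated with some extra in-leaves; dropping those leaves gives a genuine kernel of $T(n,m-4)_{\Lambda_1}$ for a set $\Lambda_1\subseteq\Lambda$ disjoint from the top strip, so the induction hypothesis applies directly. One then only needs that the top strip uses at least one subdivision, which follows because otherwise the last three rows would carry a kernel of $T(n,3)$. This structural extraction sidesteps exactly the ``no-sharing'' difficulty you flag: there is no need to argue that a single subdivision cannot serve two blocks, because the accounting is $|\Lambda| = |\Lambda_1| + (\text{subdivisions in the strip})$ with the two pieces disjoint by construction. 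Your block-by-block forcing could in principle be made to work, but the reduction-to-$T(n,m-4)$ device is cleaner and I would recommend adopting it.

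For the \emph{upper bound}, the paper's proof is essentially silent---it only argues the lower bound and leaves the matching construction implicit. Your band decomposition (overlapping copies of $T(n,3)$ and $T(n,5)$, one subdivision per band, plus the residual $T(n,2)$ or $T(n,4)$ strip in the even case) is exactly what is needed to complete the picture, and your caveat about checking compatibility on the shared rows is the right thing to worry about. So in this direction you are supplying something the paper omits.
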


\begin{proof}
  Proceed by induction over $k$.   The base cases are due to propositions \ref{T(n,3)} and \ref{t24}. Assume $T(n,m)_{\Lambda}$ has a kernel $K$ for some set of edges $\Lambda.$ Let $l$ be the number of edges in $\Lambda$ which belong to the subgraph of $T(n,m)$ spanned by the last three rows. Let $d$ denote the number of edges in $\Lambda$ which have one endpoint in the $(m-3)$th row and the other endpoint in the $(m-4)$th row.
Let $L$ be the subset of $K$ consisting of vertices which belong to these three last rows together with the new vertices obtained from subdividing one of the $l$ edges. Notice that $N^-(L)$ is contained in the last four rows. Consider  $T(n,m)_{\Lambda}\setminus N^-[L]$ and notice that it is a subdivision $H_{\Lambda_0}$ of the graph $H$, which is obtained by adding inner leaves to $T(n,m-4).$
The set $K\setminus L$ is a kernel for $H_{\Lambda_0}$ and  let $M$ denote the subset of $K\setminus L$ which does not contain the inner leaves.
Since these inner leaves have outdegree zero, $M$ is a kernel 
 for $T(n,m-4)_{\Lambda_1}$ where $\Lambda_1$ is obtained from $\Lambda_0$ by removing the edges inciding in the inner leaves. If $k$ is even, by induction hypothesis, $|\Lambda_1|\geq k-1 + \lceil \frac{n}{2} \rceil$, thus $|\Lambda|=l+d+|\Lambda_0|\geq l + k-1 + \lceil \frac{n}{2} \rceil.$ If $k$ is odd, by induction hypothesis, $|\Lambda_1|\geq k-1$ thus $|\Lambda|=l+d+|\Lambda_0|\geq l + k-1$.
 Finally, $l+d>0$ since otherwise we can find a kernel in $T(3,n)$ which contradicts Proposition \ref{T(n,3)}.
 %solo falta exhibir al conjunto lamda
\end{proof}
 
    Consider the following digraph $G$ which we shall call a $(p_1, \dots, p_{n-1})$-pithaya graph (or dragon fruit graph); $$V(G)=\{a_1,\dots ,a_n\}\cup_{i=1}^{n-1}\cup_{j=1}^{p_i}\{x_i^j,y_i^j,z_i^j\}\},$$
    $$A(G)=$$
    
    $\left\{ \begin{array}{cc}
      (a_i,x_j^i),(x_j^i,y_j^i),(a_{i+1},z_j^i),(z_j^i,y_j^i)   & \mbox{ for }j \mbox{ odd }, 1\leq j \leq p_i, 1\leq i \leq n-1 \\
    (x_j^i,a_i),(y_j^ix_j^i),(z_j^i,a_{i+1}),(y_j^i,z_j^i)   & \mbox{ for }j \mbox{ even}, 1\leq j \leq p_i,1\leq i \leq n-1
    \end{array}\right.$ 
    
   \begin{teo}
Let $G$ be a $(p_1, \dots, p_{n-1})$-pithaya graph. Then $\kappa(G)=n$ if $n_i >2.$
   \end{teo}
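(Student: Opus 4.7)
The plan is to prove $\kappa(G)=n$ by a matching upper and lower bound, exploiting the rigid local structure inside each ``flower block'' $B_i$ sitting between consecutive spine vertices $a_i$ and $a_{i+1}$. For the upper bound I would first observe that, by construction, every $y_j^i$ with $j$ odd is a sink of $G$ and every $y_j^i$ with $j$ even is a source. These vertices must therefore lie in, respectively out of, any candidate kernel. The kernel to be built will include all the odd-indexed $y_j^i$ together with a carefully chosen subset of the spine vertices $a_i$ and of the auxiliary $x_j^i,z_j^i$, selected block by block so that absorbency of the remaining flower vertices is guaranteed. After subdividing one arc inside each block (for example an arc of the form $(z_j^i,a_{i+1})$ with $j$ even) and one additional arc to resolve a parity issue at an endpoint $a_1$ or $a_n$, the resulting digraph is shown directly to admit this candidate set as a kernel, yielding $\kappa(G)\le n$.

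For the lower bound I would argue contrapositively: suppose $\Lambda\subseteq A(G)$ with $|\Lambda|<n$ and assume for contradiction that $G_\Lambda$ has a kernel $K$. The forced membership of each odd-$j$ sink $y_j^i$ and the forced exclusion of each even-$j$ source $y_j^i$ freeze $K$ on almost all flower vertices; this in turn forces, for each block $B_i$, a very constrained choice on the spine vertices $a_i,a_{i+1}$ together with on $x_j^i,z_j^i$. The hypothesis $p_i>2$ is essential here: with at least three flowers in $B_i$ one simultaneously activates both the odd-$j$ constraint (which pushes $a_i$ and $a_{i+1}$ out of $K$ in order to preserve independence with the $y_j^i$ sinks) and the even-$j$ constraint (which demands an absorbing vertex outside $y_j^i$ for both $x_j^i$ and $z_j^i$), and these constraints cannot be reconciled in $B_i$ unless some arc of $B_i$ belongs to $\Lambda$. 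Summing over $i=1,\dots,n-1$ gives $|\Lambda|\ge n-1$, and a separate endpoint argument adds one further obligatory subdivision.

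The main obstacle is in making the per-block accounting airtight, because the spine arcs glue all the blocks together through the $a_i$: in principle, a single subdivision on an arc incident to $a_i$ could be credited to both $B_{i-1}$ and $B_i$, which only gives the weaker bound $n-1$. Overcoming this requires a careful parity/phase analysis along the spine, analogous to the alternation used in Proposition~\ref{s} for the star $S_n$ of odd cycles, showing that the two ``phases'' of spine choices cannot be matched consistently without an extra subdivision somewhere. The hypothesis $p_i>2$ is precisely what rules out the degenerate cases $p_i\le 2$, in which a block has too little flower structure to enforce its own local subdivision and could instead be rescued by a shared subdivision from a neighbour.
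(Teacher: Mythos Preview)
Your upper bound is in the same spirit as the paper's (the paper subdivides $(a_i,x_1^i)$ for $1\le i\le n-1$ together with $(a_n,z_1^{n-1})$, also $n$ arcs in total). The lower-bound strategies, however, are genuinely different.

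The paper does not attempt a direct per-block count at all. It argues by induction on $n$: for $n=2$ it shows by a short case analysis on which of $x_1^1,y_1^1,z_1^1$ lies in a putative kernel that one subdivision is never enough, and then notes that the two non-absorbed vertices produced by that analysis have no common out-neighbour, so two subdivisions are forced. For the inductive step it assumes $|\Lambda|=n-1$, locates a block $P=B_i$ with $E(P)\cap\Lambda=\emptyset$, argues that $a_i$ and $a_{i+1}$ must then be absorbed from outside $P$, deletes the interior of $P$, and obtains a disjoint union of two smaller pithaya graphs whose kernel-subdivision numbers already sum to $n-1$ by induction, a contradiction. The point is that the induction packages the ``parity/phase along the spine'' issue you identify into the recursive structure: you never have to trace the alternation globally, because removing one clean block reduces you to two shorter spines whose answers you already know.

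There is also a concrete gap in your plan. Your opening move for the lower bound is that each even-$j$ vertex $y_j^i$, being a source, ``must lie out of any candidate kernel''. That inference is false: a source can perfectly well sit in a kernel (independence is automatic, and absorbency is a condition on vertices \emph{outside} the kernel). In fact, with the arc set exactly as written in the definition, the set $\{a_1,\dots,a_n\}\cup\{y_j^i:\text{all }i,j\}$ is independent and absorbent, so your forced-membership scheme collapses. The paper's base-case analysis avoids this by reasoning from the kernel's interaction with the full directed-cycle structure through $a_i$ and $a_{i+1}$ (and, as the figure indicates, with the additional $y$--$y$ arcs in each block), not from local source/sink status alone. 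If you want to keep a direct (non-inductive) argument, you will need to replace the source/sink bookkeeping by the kind of cycle-based obstruction the paper uses for $n=2$, and then make the spine-parity step precise; otherwise, the inductive route is both shorter and safer.
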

   \begin{proof}
       If we subdivide the arcs $(a_i,x_1^i)$ for $1 \leq i \leq n-1$ and $(a_n, z_1^{n-1})$, the obtained graph has a kernel thus $\kappa(G)\leq n.$
       For the other inequality we will proceed by induction over $n.$ Take $n=2$ and assume that $G$ has a kernel $N.$ Then $N$ contains at least one of the three vertices $x_1^1, y_1^1$ and $z_1^1.$ If $N$ contains $y_1^1$ then $a_1, a_2 \in N.$ But then there is no possible vertex in $N$ which absorbs $x_2^1$ and $z_2^1.$
    Now assume that $x_1^1\in N.$ Then $N$ contains an outer neighbor of $a_1,$ say $y_2^1.$ This means that $ y_3^1\in N$ but then there is no vertex which absorbs $x_3^1$ (here we can analyze further to see that either $z_3^1$ or $z_2^1$ is also not absorbed by $N$), a contradiction. Hence $\kappa(G) >0.$ Notice that, in every case there are at least two non adjacent vertices which cannot be absorbed by an independent set. Moreover these two vertices never have a common outer neighbour.  This means that we need to subdivide at least two edges to obtain a digraph with a kernel.

    Now assume that $G$ is a $(p_1, \dots, p_{n-1})$-pithaya graph. If $\kappa(G)=n-1$, let $\Lambda$ be the set of $n-1$ edges  such that when subdividing them we obtain a digraph with a kernel. Notice that there is a $p_i$-pithaya subgraph $P$ such that $E(P)\cap \Lambda = \emptyset.$ In particular, this means that both $a_i$ and $a_{i+1}$ are absorbed in the subdivided graph by vertices outside of $P.$ Then $D=(G\setminus V(P))\cup \{a_i, a_{i+1}\}$ is a disjoint union of two pithaya graphs having $\kappa(D)=n-1$ which contradicts the induction hypothesis.
    %aqui tal vez falta explicar mas.
   \end{proof}

\begin{figure}[h!]
\centering
\begin{tikzpicture}
\node[circle,draw,scale=.4] (a1) at (0,0){};
\node[circle,draw,scale=.4] (11x) at (1,0.5){};
\node[circle,draw,scale=.4] (12x) at (1,-0.5){};
\node[circle,draw,scale=.4] (14x) at (1,1.5){};
\node[circle,draw,scale=.4] (13x) at (1,-1.5){};
\node[circle,draw,scale=.4] (11y) at (2,0.5){};
\node[circle,draw,scale=.4] (12y) at (2,-0.5){};
\node[circle,draw,scale=.4] (14y) at (2,1.5){};
\node[circle,draw,scale=.4] (13y) at (2,-1.5){};
\node[circle,draw,scale=.4] (11z) at (3,0.5){};
\node[circle,draw,scale=.4] (12z) at (3,-0.5){};
\node[circle,draw,scale=.4] (14z) at (3,1.5){};
\node[circle,draw,scale=.4] (13z) at (3,-1.5){};
\node[circle,draw,scale=.4] (a2) at (4,0){};
\draw[->,bend left] (a1)to(11x);
\draw[->,bend right] (a1)to(13x);
\draw[->,bend left] (12x)to(a1);
\draw[->,bend right] (14x)to(a1);
\draw[->,bend left] (11x)to(11y);
\draw[->,bend right] (13x)to(13y);
\draw[->,bend left] (12y)to(12x);
\draw[->,bend right] (14y)to(14x);
\draw[->] (11y) to (12y);
\draw[->] (13y) to (14y);
\draw[->] (13y) to (12y);
\draw[->,bend right] (a2)to(11z);
\draw[->,bend left] (a2)to(13z);
\draw[->,bend right] (12z)to(a2);
\draw[->,bend left] (14z)to(a2);
\draw[->,bend right] (11z)to(11y);
\draw[->,bend left] (13z)to(13y);
\draw[->,bend right] (12y)to(12z);
\draw[->,bend left] (14y)to(14z);

\node[circle,draw,scale=.4] (21x) at (5,0.5){};
\node[circle,draw,scale=.4] (22x) at (5,-0.5){};
\node[circle,draw,scale=.4] (24x) at (5,1.5){};
\node[circle,draw,scale=.4] (23x) at (5,-1.5){};
\node[circle,draw,scale=.4] (21y) at (6,0.5){};
\node[circle,draw,scale=.4] (22y) at (6,-0.5){};
\node[circle,draw,scale=.4] (24y) at (6,1.5){};
\node[circle,draw,scale=.4] (23y) at (6,-1.5){};
\node[circle,draw,scale=.4] (21z) at (7,0.5){};
\node[circle,draw,scale=.4] (22z) at (7,-0.5){};
\node[circle,draw,scale=.4] (24z) at (7,1.5){};
\node[circle,draw,scale=.4] (23z) at (7,-1.5){};
\node[circle,draw,scale=.4] (a3) at (8,0){};
\draw[->,bend left] (a2)to(21x);
\draw[->,bend right] (a2)to(23x);
\draw[->,bend left] (22x)to(a2);
\draw[->,bend right] (24x)to(a2);
\draw[->,bend left] (21x)to(21y);
\draw[->,bend right] (23x)to(23y);
\draw[->,bend left] (22y)to(22x);
\draw[->,bend right] (24y)to(24x);
\draw[->] (21y) to (22y);
\draw[->] (23y) to (24y);
\draw[->] (23y) to (22y);
\draw[->,bend right] (a3)to(21z);
\draw[->,bend left] (a3)to(23z);
\draw[->,bend right] (22z)to(a3);
\draw[->,bend left] (24z)to(a3);
\draw[->,bend right] (21z)to(21y);
\draw[->,bend left] (23z)to(23y);
\draw[->,bend right] (22y)to(22z);
\draw[->,bend left] (24y)to(24z);
\end{tikzpicture}
\caption{A $(4,4)$ pithaya graph }
\end{figure}
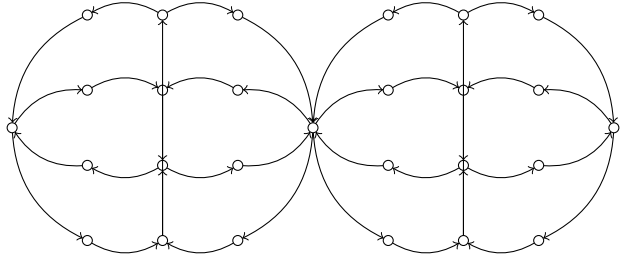

\section{A bound for $\kappa^{H}(G)$}\label{S2}
The following result was obtained by Galeana-Sánchez, Rojas-Monroy, \linebreak Sánchez-López and Zavala-Santana in \cite{HRR}.
 \begin{teo}\label{Hsubdivision}\cite{HRR}
      Let $H$ be a digraph possibly with loops, $D$ a digraph without infinite outward paths and $S_{H}(D)$ a subdivision of $D$ with respect to $H.$ Suppose that $|V(D)|\geq 2k+3$ and $|\xi_{D}^{+}(v)|\leq k$ for every $v$ in $V(D)$, for some positive integer $k.$ Then, $S_{H}(D)$ has an $H$-kernel by walks. 
  \end{teo}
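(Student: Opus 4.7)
My plan is to proceed by well-founded induction on $V(D)$ using the reachability order, which is well-founded precisely because $D$ has no infinite outward path. The $H$-kernel $N$ of $S_H(D)$ is built vertex-by-vertex, sinks first. Each sink of $D$ has no out-neighbors in $S_H(D)$, so it must automatically belong to $N$; these form the base case.

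For the inductive step at a non-sink vertex $v$, assume the construction has produced a partial $H$-kernel $N_v$ on the strict descendants of $v$. I would then choose among three options: place $v$ itself in $N$, place a subdivision vertex $a=(u,v)$ representing an in-arc to $v$ in $N$ (which $H$-absorbs $v$), or leave $v$ out if it is already $H$-absorbed by $N_v$. The outward color bound $|\xi_D^{+}(v)|\leq k$ controls the number of color-classes of arcs leaving $v$, and hence the number of continuations of an $H$-walk at $v$ that could create conflicts. The size bound $|V(D)|\geq 2k+3$ then provides the pigeonhole slack needed at the step: intuitively, at most $2k$ vertices can be `blocked' by earlier commitments (at most $k$ via color conflicts with existing $H$-walks into $N_v$, and at most $k$ via color conflicts with the walks leaving a candidate), leaving at least three safely usable candidates from which to complete the choice without destroying $H$-independence.

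The main obstacle will be the global bookkeeping for $H$-independence. When one inserts a subdivision vertex $a=(u,v)$ into $N$, every $H$-walk starting at $a$, passing through $v$, and continuing along any color-compatible sequence in $H$ is a potential route to another member of $N$. To control this, I would maintain a strong invariant throughout the induction: at each stage the partial $N$ is $H$-independent, and every already-processed vertex is either in $N$, $H$-absorbed by $N$, or `reserved' as a safe anchor for future subdivision choices. Verifying that the numerical hypothesis $|V(D)|\geq 2k+3$ is tight enough to preserve this invariant at every step — in particular, that the delicate balance between colors arriving from $N_v$ and colors leaving the current candidate never exhausts the budget — is the technical crux, and is where I would expect the bulk of the casework to live.
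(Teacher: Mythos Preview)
Your induction scheme fails at the outset. The hypothesis forbids infinite outward \emph{paths} (sequences of distinct vertices), but it does not forbid directed cycles; a finite digraph satisfies the hypothesis vacuously and may, for instance, be a single directed cycle. In that case the reachability relation is not even antisymmetric, there are no sinks, and well-founded induction on reachability is simply unavailable. The whole ``sinks first'' construction therefore never starts in the interesting cases. There is also a direction error in your second option: for a subdivision vertex $a=(u,v)$ the arcs of $S_H(D)$ are $(u,a)$ and $(a,v)$, so the $H$-walk goes from $a$ to $v$, not from $v$ to $a$; placing $a$ in $N$ does not $H$-absorb $v$.

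The paper's argument (carried out for the more general partial subdivision $R_H^G(D)$, of which $S_H(D)$ is the special case $G=D$) is designed precisely to cope with cycles. One forms an auxiliary digraph $D_R$ whose vertices are the subdivision vertices, with $(a_i,a_j)\in A(D_R)$ whenever there is an $a_ia_j$-$H$-walk in $S_H(D)$; since each subdivision vertex has in- and out-degree one, such $H$-walks concatenate and $D_R$ is transitive. The numerical hypotheses enter only through the Flower Lemma: any $2k+1$ consecutive $H$-walks among $2k+2$ distinct endpoints that share a common internal vertex must contain a backward $H$-walk, and this is used to show that every infinite outward path of $D_R$ contains a symmetric arc. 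A known theorem on transitive digraphs with that property then gives a kernel $N_1$ of $D_R$, and the $H$-kernel of $S_H(D)$ is assembled from $N_1$ together with the sinks of $D$. Thus $|V(D)|\ge 2k+3$ and $|\xi_D^+(v)|\le k$ function as a pigeonhole on the colors of $H$-walks through a single vertex, not as a per-step local budget in an inductive construction.
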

Theorem \ref{Hsubdivision} also holds for finite digraphs, so we have the analogous to  Remark \ref{topp} as follows.
\begin{remark}\label{HRRsub}
For every digraph $D$, $0 \leq \kappa^{H}(D) \leq |A(D)|$.
\end{remark}

In this section we will prove that, under certain hypothesis on a digraph $D$, we just need to subdivide the arcs of an  spanning subdigraph of $D$ in order to obtain a kernel on the partial subdivision of $D$. This generalizes Theorem \ref{Hsubdivision} and improves the bound obtained in Remark \ref{HRRsub}.

\subsection{$H$-kernels in the partial subdivision digraph}

Let us define \textit{partial subdivision} of $D$ with respect to $H.$ 

\begin{defi}\label{defi5}
Let $H$ and $D$ be two digraphs, $G$ be a spanning subdigraph of $D$ such that $\delta_{G}^{+}(v)=0$ if and only if $\delta_{D}^{+}(v)=0$ and $S_{H}(G)$ be a subdivision of $G$. A partial subdivision of $D,$ denoted by $R_{\text{\tiny H}}^{\text{\tiny G}}(D)$ is an $H$-colored digraph defined as follows:
\begin{enumerate}
\item $V(R_{\text{\tiny H}}^{\text{\tiny G}}(D))=V(S_{H}(G)).$
\item $A(R_{\text{\tiny H}}^{\text{\tiny G}}(D))=A(S_{H}(G))\cup [A(D)\setminus A(G)].$
\end{enumerate}
Notice that $R_{\text{\tiny H}}^{\text{\tiny G}}(D)$ is an $H$-colored digraph and if $a\in [A(D)\setminus A(G)],$ $c(a)\in V(H).$    
\end{defi}

We need to introduce some notation in order to present our proofs more compactly.
\\
Let $H$ be a digraph and $D$ an $H$-colored digraph. Consider $\{u,v\},S_{1}$ and $S_{2}$ three subsets of $V(D).$ We will write: $u\xrightarrow [\ \ D \ \ ]{H} v$ if there exists a $uv$-$H$-walk in $D$; $S_{1}\xrightarrow [\ \ D \ \ ]{H} S_{2}$ if there is a $S_{1}S_{2}$-$H$-walk in $D$; $u\centernot{\xrightarrow[\ \ D \ \ ]{H}} v$ is the denial of $u\xrightarrow [\ \ D \ \ ]{H} v$; $S_{1}\centernot{\xrightarrow[\ \ D \ \ ]{H}} S_{2}$ is the denial of $S_{1}\xrightarrow [\ \ D \ \ ]{H} S_{2}.$ 
\\

We will define the following digraph that will be useful to prove the main result.
\begin{defi}
Let $H$ and $D$ be two digraphs, $G$ a spanning subdigraph of $D$, $R_{\text{\tiny H}}^{\text{\tiny G}}(D)$ a partial subdivision of $D$. The digraph $D_R$ is such that.
\begin{enumerate}
\item $V(D_{R})= A(G).$
\item $A(D_{R})=\{(a_{i},a_{j}): a_{i}\xrightarrow[R_{\text{\tiny H}}^{\text{\tiny G}}(D)]{H} a_{j}\}. $
\end{enumerate}
\end{defi}

The following result will be useful in this paper.

\begin{teo} \label{1}
\cite{RV} Let D be a digraph possibly infinite. Suppose that $D$ is a transitive digraph such that every infinite outward path has at least one symmetric arc. Then $D$ has a kernel. 
\end{teo}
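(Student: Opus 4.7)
The strategy is to quotient $D$ by the ``symmetric arc'' relation and to read off a kernel from the maximal elements of the resulting poset.

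I would start by defining $u\sim v$ to hold when either $u=v$ or both $(u,v)$ and $(v,u)$ lie in $A(D)$. Reflexivity and symmetry are immediate, while transitivity follows from that of $D$: if $u\sim v\sim w$, applying transitivity to $(u,v),(v,w)$ and to $(w,v),(v,u)$ produces $(u,w)$ and $(w,u)$. On the quotient $P=V(D)/\!\sim$ define $[u]\le[v]$ iff $u=v$ or $(u,v)\in A(D)$. Transitivity of $D$ makes this relation representative-independent and transitive, and the definition of $\sim$ makes it antisymmetric; so $(P,\le)$ is a genuine partial order.

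Next I would translate the hypothesis into the ascending chain condition on $P$. Suppose, toward a contradiction, that $[v_0]<[v_1]<[v_2]<\cdots$ is an infinite strictly ascending chain, and pick a representative $v_i$ from each class. The $v_i$ are pairwise distinct (their classes are), and $[v_i]<[v_{i+1}]$ forces $(v_i,v_{i+1})\in A(D)$ while $(v_{i+1},v_i)\notin A(D)$, so none of these arcs is symmetric. This would be an infinite outward path in $D$ with no symmetric arc, contradicting the hypothesis. Hence $P$ satisfies the ACC, and by a standard iterative argument (or Zorn's lemma applied to the up-set above a fixed class, whose ascending chains must terminate) every class of $P$ lies below some maximal class.

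To finish, let $N$ be a choice of one representative from each maximal class of $P$. For independence, suppose $m,m'\in N$ with $m\neq m'$; then $[m]\neq [m']$, and an arc $(m,m')\in A(D)$ would give $[m]\le[m']$, forcing $[m]=[m']$ by maximality of $[m]$, a contradiction. For absorbency, fix $u\notin N$ and a maximal class $[\widehat{m}]\ge [u]$ with chosen representative $m\in N$. If $u\in [\widehat{m}]$ then $u\ne m$ forces $(u,m)\in A(D)$ directly by the definition of $\sim$. Otherwise $[u]<[\widehat{m}]$, so there exist $u'\sim u$ and $m'\sim m$ with $(u',m')\in A(D)$; chaining $(u,u'),(u',m'),(m',m)$ via transitivity of $D$ (skipping whichever of these arcs are not needed when representatives coincide) yields $(u,m)\in A(D)$.

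The main obstacle is the absorbency verification: one must juggle several subcases ($u=u'$, $m=m'$, $u$ in or outside $[\widehat{m}]$) and in each invoke transitivity of $D$ with the correct arcs. A minor technical point is that transitivity of $D$ forces a loop at every vertex in an equivalence class of size $\ge 2$, so $N$ may contain looped vertices; we read independence of $N$ in the standard way, as requiring no arcs between \emph{distinct} vertices of $N$, which is unaffected by these loops.
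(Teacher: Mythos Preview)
The paper does not supply a proof of this theorem; it is quoted from \cite{RV} and used as a black box in Proposition~\ref{8}, so there is no in-paper argument to compare against.

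Your proof is correct and is the natural one: collapse by the symmetric-arc equivalence, observe that the quotient is a poset satisfying the ascending chain condition (this is exactly where the hypothesis on infinite outward paths enters), and lift one representative from each maximal class back to a kernel of $D$. One small simplification: in the absorbency step the detour through auxiliary representatives $u',m'$ is unnecessary, since you already established that $\le$ is well-defined on classes, so $[u]<[\widehat m]=[m]$ gives $(u,m)\in A(D)$ directly for the specific $u$ and $m$ you care about. Your caveat about loops is well placed; in the paper's application the relevant digraph is $D_R$, and the usual reading of independence (no arcs between \emph{distinct} vertices) is the one that makes the statement coherent there as well.
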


From now on, we are going to denote by $H$ a digraph possibly with loops, by $D$ a digraph possibly infinite with out loops and by $G$ a spanning subdigraph of $D,$ we will use these digraphs for the construction of $R_{\text{\tiny H}}^{\text{\tiny G}}(D),$ a partial subdivision of $D,$ and the respective digraph $D_{R}.$ We will omit them from the hypothesis of the statements.     

The following notation will simplify some proofs. Let $W=(v_{0},v_{1},\ldots,v_{n})$ be an $H$-walk and $\{v_{i},v_{j}\} \subseteq V(W)$, with $i\leqslant j$, we will denote by $(v_i,W,v_{j})$ any $H$-walk that starts in $v_{i},$ ends in $v_{j}$ and it is contained in $W.$
 
The following lemmas will be very useful for proving the main results.    
\begin{lema} \label{2}

Let $R_{\text{\tiny H}}^{\text{\tiny G}}(D)$ a partial subdivision of $D$ and $\{a,b,c\}\subseteq V(R_{\text{\tiny H}}^{\text{\tiny G}}(D))$ such that $b\in A(D)$. If $W_1$ is an $ab$-$H$-walk in $R_{\text{\tiny H}}^{\text{\tiny G}}(D)$ and $W_2$ is a $bc$-$H$-walk in $R_{\text{\tiny H}}^{\text{\tiny G}}(D)$, then $W_1\cup W_2$ is an $H$-walk in $R_{\text{\tiny H}}^{\text{\tiny G}}(D).$ 
\end{lema}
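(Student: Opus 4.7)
The plan is to exploit the fact that the vertex $b$, being simultaneously in $A(D)$ and in $V(R_{\text{\tiny H}}^{\text{\tiny G}}(D))=V(D)\cup A(G)$, must lie in $A(G)$ (since $V(D)$ and $A(D)$ are disjoint as sets of vertices in a digraph). So $b=(u,v)$ is a subdivision vertex introduced by $S_{H}(G)$.

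First I would observe that $b$ has a very rigid neighborhood in $R_{\text{\tiny H}}^{\text{\tiny G}}(D)$. By Definition \ref{defi5}, the arc set splits as $A(S_{H}(G))\cup[A(D)\setminus A(G)]$; the arcs of $A(D)\setminus A(G)$ join two vertices of $V(D)$, so they never touch the subdivision vertex $b\in A(G)$. Inside $S_{H}(G)$, by Definition \ref{defi3}, the only arcs incident to $b$ are $(u,b)$ and $(b,v)$. Hence $b$ has $u$ as its unique in-neighbor and $v$ as its unique out-neighbor in $R_{\text{\tiny H}}^{\text{\tiny G}}(D)$. Consequently $W_{1}$ must end with the arc $(u,b)$ and $W_{2}$ must begin with the arc $(b,v)$, so $W_{1}\cup W_{2}$ traverses the local block $(\ldots,u,b,v,\ldots)$ at the concatenation point.

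Next I would verify the $H$-walk condition for $W_{1}\cup W_{2}$, which amounts to checking that every pair of consecutive arc-colors forms an arc of $H$. Away from the junction, this is immediate: the pairs coming entirely from $W_{1}$ or entirely from $W_{2}$ are $H$-walk transitions by hypothesis. The only transition to verify is the one crossing $b$, namely $\bigl(c(u,b),c(b,v)\bigr)$. Here I would invoke part (3) of Definition \ref{defi3}, which is precisely the statement that $(u,b,v)$ is an $H$-walk in $S_{H}(G)\subseteq R_{\text{\tiny H}}^{\text{\tiny G}}(D)$. This yields $\bigl(c(u,b),c(b,v)\bigr)\in A(H)$, and so the color sequence of $W_{1}\cup W_{2}$ is a walk in $H$, giving the conclusion.

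There is no real obstacle; the lemma is essentially a bookkeeping statement that says subdivision vertices in $R_{\text{\tiny H}}^{\text{\tiny G}}(D)$ are automatically ``safe'' concatenation points for $H$-walks, thanks to the way colorings on the two half-arcs $(u,b)$ and $(b,v)$ were set up in Definition \ref{defi3}. The only point requiring any care is ensuring that one has correctly identified $b$ as a subdivision vertex (so that $u$ and $v$ are uniquely determined and so that the subdivision coloring applies), which is why I would spend the first paragraph of the actual write-up emphasizing that $b\in A(G)$ and has in- and out-degree one in $R_{\text{\tiny H}}^{\text{\tiny G}}(D)$.
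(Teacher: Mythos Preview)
Your proposal is correct and follows essentially the same argument as the paper: identify $b$ as a subdivision vertex with unique in- and out-neighbor, so the only new color transition in $W_1\cup W_2$ is $(c(u,b),c(b,v))$, which is an arc of $H$ by the definition of the subdivision coloring. The paper's write-up differs only in that it explicitly separates out the degenerate case $a=b$ or $b=c$ (where $W_1\cup W_2$ equals $W_2$ or $W_1$), which your phrasing ``$W_1$ must end with the arc $(u,b)$'' tacitly assumes away; you may wish to add one sentence covering that triviality.
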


\begin{proof}
Suppose that $W_{1}=(a=v_{0},v_{1},\ldots,v_{n}=b)$ and $W_{2}=(b=u_{0},u_{1},\ldots,u_{k}=c)$ with $\{k,n\}\subseteq \mathbb{N}$. There are two cases:
\\

\textbf{Case 1.} If $a=b$ or $b=c$, then $W_{1}\cup W_{2}=W_{2}$ or $W_{1}\cup W_{2}=W_{1},$ so $W_{1}\cup W_{2}$   is an $H$-walk in $R_{\text{\tiny H}}^{\text{\tiny G}}(D)$.
\\

\textbf{Case 2.} If $b\notin \{a,c\}$. We have that $W_{1}$ and $W_2$ both are $H$-walks in $R_{\text{\tiny H}}^{\text{\tiny G}}(D)$, now we will show that $(c_{R_{\text{\tiny H}}^{\text{\tiny G}}(D)}(v_{n-1},v_{n}=b),c_{R_{\text{\tiny H}}^{\text{\tiny G}}(D)}(b=u_{0},u_{1}))\in A(H).$ By hypothesis $b\in A(D)$, the definition of $R_{\text{\tiny H}}^{\text{\tiny G}}(D)$ implies that $b=(v_{n-1},u_{1})$, $\delta_{R_{\text{\tiny H}}^{\text{\tiny G}}(D)}^{-}(b)=1=\delta_{R_{\text{\tiny H}}^{\text{\tiny G}}(D)}^{+}(b),$ and $(v_{n-1},v_{n}=b=u_{0},u_{1})$ is an $H$-walk in $R_{\text{\tiny H}}^{\text{\tiny G}}(D)$. Thus $(c_{R_{\text{\tiny H}}^{\text{\tiny G}}(D)}(v_{n-1},v_{n}=b),c_{R_{\text{\tiny H}}^{\text{\tiny G}}(D)}(b=u_{0},u_{1}))\in A(H)$.  
\end{proof}

\begin{lema} \label{3}
If $D$ has no infinite outward path, then $R_{\text{\tiny H}}^{\text{\tiny G}}(D)$ has no infinite outward path. 
\end{lema}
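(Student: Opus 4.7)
The plan is to argue by contrapositive: suppose $R_{\text{\tiny H}}^{\text{\tiny G}}(D)$ contains an infinite outward path $W = (v_0, v_1, v_2, \ldots)$ and extract from it an infinite outward path in $D$.

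First I would record a local structure fact about the subdivision vertices. If $a \in A(G) \subseteq V(R_{\text{\tiny H}}^{\text{\tiny G}}(D))$, say $a = (u,v)$ with $u, v \in V(D)$, then by Definition \ref{defi5} the only arc of $R_{\text{\tiny H}}^{\text{\tiny G}}(D)$ entering $a$ is $(u,a)$ and the only arc leaving $a$ is $(a,v)$. Hence $\delta^{-}(a) = \delta^{+}(a) = 1$ in $R_{\text{\tiny H}}^{\text{\tiny G}}(D)$, and in particular two consecutive vertices of $W$ cannot both lie in $A(G)$: if $v_i \in A(G)$ then its unique in-neighbour $v_{i-1}$ and unique out-neighbour $v_{i+1}$ both belong to $V(D)$.

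Next I would consider the subsequence $(u_0, u_1, u_2, \ldots)$ of $W$ obtained by keeping only those $v_i$ that lie in $V(D)$. Since subdivision vertices never appear twice in a row in $W$, at least every other vertex of $W$ belongs to $V(D)$; hence this subsequence is infinite. For each $k$, either $u_k$ and $u_{k+1}$ are consecutive in $W$, in which case the arc $(u_k,u_{k+1})$ belongs to $A(D)\setminus A(G) \subseteq A(D)$; or they are separated in $W$ by a single vertex $a\in A(G)$, and by the local structure fact $a = (u_k,u_{k+1}) \in A(G) \subseteq A(D)$. Either way $(u_k,u_{k+1}) \in A(D)$, so $(u_0, u_1, u_2, \ldots)$ is an infinite outward path in $D$, contradicting the hypothesis.

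The argument is essentially a bookkeeping exercise: the substantive content is that subdivision vertices act as forced two-step bypasses in $R_{\text{\tiny H}}^{\text{\tiny G}}(D)$, so one can contract each such vertex back into an arc of $D$. The only point that needs any care is confirming that the $V(D)$-subsequence is genuinely infinite, and that follows at once from the fact that two subdivision vertices are never adjacent in $R_{\text{\tiny H}}^{\text{\tiny G}}(D)$.
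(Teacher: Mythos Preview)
Your proof is correct and follows essentially the same route as the paper: both argue by contrapositive, pass to the subsequence of vertices lying in $V(D)$, and observe that consecutive members of this subsequence are either directly adjacent via an arc of $A(D)\setminus A(G)$ or separated by a single subdivision vertex $a=(u_k,u_{k+1})\in A(G)$. Your version is in fact slightly more careful than the paper's, since you explicitly justify that the $V(D)$-subsequence is infinite (via the observation that two subdivision vertices are never adjacent), a point the paper leaves implicit.
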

\begin{proof}
Suppose that $T=(v_i)_{i\in \mathbb{N}\cup \{0\}}$ is an infinite outward path in $R_{\text{\tiny H}}^{\text{\tiny G}}(D)$, let $T^{'}=(x_{n_{i}})_{i\in \mathbb{N}\cup \{0\}}$ be the subsequence of $T$ such that $x_{n_{i}}\in V(D)$ for all $i\in \mathbb{N}$ and $[V(T)\setminus V(T^{'})]\cap V(D)=\emptyset$  .
\\

Consider the following cases.
\\

\textbf{Case 1.} If $(x_{n_{i}},x_{n_{i+1}})\in A(T),$ by the definition of $R_{\text{\tiny H}}^{\text{\tiny G}}(D),$ we have that $(x_{n_{i}},x_{n_{i+1}})\in A(D).$ 
\\

\textbf{Case 2.} If $(x_{n_{i}},x_{n_{i+1}})\notin A(T),$ there is a vertex $a_i\in V(R_{\text{\tiny H}}^{\text{\tiny G}}(D)$ such that $(x_{n_{i}},a_i)\in A(T)$ and \linebreak $(a_{i},x_{n_{i+1}})\in A(T),$ and by the definition of $R_{\text{\tiny H}}^{\text{\tiny G}}(D)$ the vertex $a_i$ is the arc $(x_{n_{i}},x_{n_{i+1}})\in A(D).$
\\

Thus $T^{'}=(x_{n_{i}})_{i\in \mathbb{N}\cup \{0\}}$ is an infinite outward path in $D$. We conclude that if $D$ has no infinite outward path, then $R_{\text{\tiny H}}^{\text{\tiny G}}(D)$ has no infinite outward paths.
\end{proof} 

\begin{lema}\label{4}
(Flower \cite{HRR})  
Let $D$ an $H$-colored digraph, with $|V(D)|\geq 2k+3$ and $|\xi_{D}^{+}(v)|\leq k$ for every $v$ in $V(D)$, for some positive integer $k$. Suppose that $\{a_1,a_2,...,a_{2k+2}\}$ is a subset of $V(D)$ such that:
\begin{enumerate}
\item $a_i\neq a_j$ for every $\{i,j\}$ of $\{1,2,\ldots,2k+2\}$ with $i\neq j$.
\item There exists an $a_{i}a_{i+1}$-$H$-walk, say $W_i$, in $D$ for each $i$ in $\{1,\ldots,2k+1\}.$
\item $\bigcap\limits_{i=1}^{2k+1} [V(W_{i})\setminus \{a_{i},a_{i+1}\}]\neq \emptyset.$
\end{enumerate}
Then, there exists a subset $\{i,j\}$ of $\{2,\ldots,2k+1\}$, with $i\leq j-1$, such that there is an $a_{j}a_{i}$-$H$-walk in $D$.
\end{lema}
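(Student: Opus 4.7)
The plan is to exploit hypothesis (3) by fixing a vertex $x$ common to the interiors of all $2k+1$ walks, and then to apply the pigeonhole principle to the colors of the arcs that leave $x$ along each $W_i$. The bound $|\xi_D^+(x)|\le k$ forces several of these outgoing colors to coincide, which will let me splice the head of one walk to the tail of an earlier one to produce the required backward $H$-walk.

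First, I pick $x\in\bigcap_{i=1}^{2k+1}[V(W_i)\setminus\{a_i,a_{i+1}\}]$, which is nonempty by hypothesis (3). For each $i\in\{1,\ldots,2k+1\}$, the vertex $x$ is an internal vertex of $W_i$, so I fix an occurrence of $x$ in $W_i$ and split $W_i$ there as $W_i=W_i^1\cup W_i^2$, where $W_i^1$ is an $a_ix$-subwalk and $W_i^2$ is an $xa_{i+1}$-subwalk. Let $\alpha_i$ denote the color of the arc of $W_i^1$ entering $x$ and $\beta_i$ the color of the arc of $W_i^2$ leaving $x$; since $W_i$ is an $H$-walk passing through $x$, the transition $(\alpha_i,\beta_i)\in A(H)$ holds for every $i$.

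Next, the colors $\beta_1,\ldots,\beta_{2k+1}$ all lie in $\xi_D^+(x)$, a set of cardinality at most $k$, so by the pigeonhole principle some color is attained at least $\lceil(2k+1)/k\rceil=3$ times. I choose three indices $i_1<i_2<i_3$ in $\{1,\ldots,2k+1\}$ with $\beta_{i_1}=\beta_{i_2}=\beta_{i_3}$ and consider the walk $T=W_{i_3}^1\cup W_{i_1}^2$ from $a_{i_3}$ to $a_{i_1+1}$. Every color transition of $T$ other than at $x$ already appears in one of the original $H$-walks, and the transition at $x$ is $(\alpha_{i_3},\beta_{i_1})=(\alpha_{i_3},\beta_{i_3})\in A(H)$, so $T$ is an $H$-walk in $D$. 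Setting $j=i_3$ and $i=i_1+1$, I have $i\le j-1$ because $i_3\ge i_1+2$, and both $i,j$ lie in $\{2,\ldots,2k+1\}$: indeed $i_1+1\ge 2$, $i_3\le 2k+1$, and $i_1+1\le 2k$ (since $i_1<i_2<i_3\le 2k+1$ forces $i_1\le 2k-1$).

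The step I expect to be the most delicate is the pigeonhole bookkeeping in the second paragraph: a weaker argument producing only two indices with a common outgoing color could yield $j-i=0$ and violate the strict inequality $i\le j-1$. Extracting three indices and then using the outermost two is precisely what ensures a gap of at least two, and this is why the hypotheses couple the bound $|\xi_D^+(v)|\le k$ with a sequence of $2k+2$ vertices and $2k+1$ walks rather than a shorter sequence. The condition $|V(D)|\ge 2k+3$ is used only to make such a sequence of distinct vertices available.
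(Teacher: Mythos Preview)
Your argument is correct: fixing the common interior vertex $x$, splitting each walk there, and applying the pigeonhole principle to the $2k+1$ outgoing colors at $x$ (a set of size at most $k$) yields three indices $i_1<i_2<i_3$ with $\beta_{i_1}=\beta_{i_3}$, and then $W_{i_3}^1\cup W_{i_1}^2$ is an $a_{i_3}a_{i_1+1}$-$H$-walk with $j=i_3$, $i=i_1+1$ satisfying all the required constraints. Your care in extracting three indices rather than two, so that $i_3\ge i_1+2$ and hence $i\le j-1$, is exactly the point.

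There is nothing to compare against in this paper: the lemma is quoted from \cite{HRR} (hence the attribution ``Flower'') and no proof is supplied here. Your pigeonhole-and-splice argument is the natural one and is essentially how the result is established in that reference.
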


\begin{lema}\label{5}
 Let $D$ be an $H$-colored digraph and $R_{\text{\tiny H}}^{\text{\tiny G}}(D)$ a partial subdivision of $D$ such that for some positive integer $k,$ $|V(R_{\text{\tiny H}}^{\text{\tiny G}}(D))|\geq 2k+3$ and $|\xi_{R_{\text{\tiny H}}^{\text{\tiny G}}(D)}^{+}(v)|\leq k$ for every $v$ in $V(R_{\text{\tiny H}}^{\text{\tiny G}}(D)).$ Suppose that $(a_{n})_{n\in \mathbb{N}}$ is a sequence of different vertices of $D_R$ such that:
\begin{enumerate}
\item There is an $a_{i}a_{i+1}$-$H$-walk, say $W_i$, in $R_{\text{\tiny H}}^{\text{\tiny G}}(D)$ for every $i\in \mathbb{N.}$

\item $a_{i+1} \centernot{\xrightarrow[R_{\text{\tiny H}}^{\text{\tiny G}}(D)]{H}} a_{i}$ for every $i\in \mathbb{N}$. 
\end{enumerate}

Then, there exist a vertex y in $W_1$, a path P in $R_{\text{\tiny H}}^{\text{\tiny G}}(D)$, a vertex $x$ in $W_{\lambda}$ for some $\lambda$ in  $\mathbb{N}\setminus \{1\}$ and a sequence $(b_m)_{m\in \mathbb{N}}$ of different vertices of $D_R$ such that:

\begin{enumerate}
\item[I.] $x\in A(D).$

\item[II.] $y\in V(W_{\lambda}).$

\item[III.]  There is a $ya_{\lambda +1}$-$H$-walk contained in $W_{\lambda},$ say  $\widehat{W},$ where $x$ is the first vertex in $\widehat{W}$ such that $x\in A(D).$  

\item[IV.] $P$ is an $a_{1}x$-path.

\item[V.] $a_{\lambda +1} \centernot{\xrightarrow[R_{\text{\tiny H}}^{\text{\tiny G}}(D)]{H}} x.$

\item[VI.] $x\xrightarrow [R_{\text{\tiny H}}^{\text{\tiny G}}(D)]{H} a_{\lambda +2}.$

\item[VII.] $x\notin W_{\lambda +n}$ for every $n\in \mathbb{N}\setminus \{1\}.$

\item[VIII.] $b_{1}=x$ and $b_{i}\xrightarrow [R_{\text{\tiny H}}^{\text{\tiny G}}(D)]{H} b_{i+1}$ and $b_{i+1}\centernot{\xrightarrow[R_{\text{\tiny H}}^{\text{\tiny G}}(D)]{H}} b_{i}$ for each $i\in \mathbb{N}.$
 \linebreak
 \item[IX.] For 
\begin{center}
$W_{j}^{'}=\left \{\begin{array}{ccc} \widehat{W}\cup W_{\lambda +1}& if & j=1$ $and$ $x\neq a_{\lambda +1}.\\ W_{\lambda +1} & if & j=1$ $and$ $x=a_{\lambda +1}\\ W_{\lambda +j} & if & j\neq 1.\end{array}\right.$
\end{center}
it holds that for each $j\in \mathbb{N},$
\begin{center}
$V(P)\cap V(W_{j}^{'})= \left\{ \begin{array}{cl}  \{x\} &$ $if$ $j=1.\\ \emptyset & in$ $other$ $case.\end{array}\right.$   
\end{center}
\end{enumerate}
\end{lema}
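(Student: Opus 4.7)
The plan is to select a distinguished subdivision vertex $x$ that witnesses the first return of some later walk $W_\lambda$ to $W_1$, and then continue the given sequence from $x$ using the tail $a_{\lambda+2}, a_{\lambda+3}, \dots$, thereby producing the required ``witness'' sequence $(b_m)$.

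First I would establish that some $\lambda \geq 2$ with $V(W_1)\cap V(W_\lambda) \neq \emptyset$ must exist. If not, one could line up $2k+2$ consecutive vertices $a_i, \dots, a_{i+2k+1}$ whose connecting walks share a common vertex (forced by the bound $|\xi^{+}_{R_H^G(D)}(v)| \leq k$ combined with the hypothesis $|V(R_H^G(D))|\geq 2k+3$), and then Lemma \ref{4} would produce an $H$-walk $a_j \to_H a_i$ with $i < j$; splicing this with $W_i, \dots, W_{j-1}$ via Lemma \ref{2} at the intermediate subdivision vertices (which lie in $A(D)$) would yield $a_{i+1} \to_H a_i$, contradicting hypothesis~(2). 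Once the existence of $\lambda$ is granted, I would take $\lambda$ minimal, pick $y \in V(W_1) \cap V(W_\lambda)$, let $\widehat{W}$ be the $y a_{\lambda+1}$-sub-walk of $W_\lambda$, and let $x$ be the first vertex of $\widehat{W}$ in $A(D)$; the fact that $a_{\lambda+1} \in A(G) \subseteq A(D)$ guarantees such an $x$ exists. I take $P$ to be the $a_1 y$-sub-walk of $W_1$ concatenated with the $yx$-piece of $\widehat{W}$, trimmed to a simple path. Conditions I--IV then follow directly.

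Next, for condition VI I concatenate $\widehat{W}$ (an $x a_{\lambda+1}$-$H$-walk) with $W_{\lambda+1}$ via Lemma \ref{2}, which is legal because $a_{\lambda+1}\in A(D)$, obtaining an $H$-walk $x \to_H a_{\lambda+2}$. Defining $b_1 = x$ and $b_m = a_{\lambda+m}$ for $m \geq 2$, condition VIII reduces to $a_{\lambda+2} \not\to_H x$ (otherwise concatenating at $x\in A(D)$ through $\widehat{W}$ would give $a_{\lambda+2}\to_H a_{\lambda+1}$, contradicting hypothesis~(2)) together with hypothesis~(2) for the tail indices; the vertices $b_m$ are automatically distinct thanks to condition VII. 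Conditions V and VII are the delicate ones, and I would argue both by contradiction: an $H$-walk $a_{\lambda+1}\to_H x$, or an occurrence of $x$ on some $W_{\lambda+n}$ with $n\geq 2$, could be spliced at the subdivision vertex $x$ with $\widehat{W}$ and the later walks via Lemma \ref{2}, and then a Flower-style argument on the resulting block would produce an $H$-walk of the form $a_{j+1}\to_H a_j$, contradicting hypothesis~(2). Finally, condition IX follows from the minimal choice of $\lambda$ (which forces $P$ to avoid every $W_{\lambda+j}$ with $j\geq 2$, lest we shrink $\lambda$) and from $x$ being the first subdivision vertex on $\widehat{W}$ (which forces $V(P)\cap V(W_1')=\{x\}$).

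The hard part will be the Flower Lemma bookkeeping. Lemma \ref{4} only activates once $2k+2$ walks genuinely share a common vertex, and the backward $H$-walk it produces only yields a contradiction to hypothesis~(2) after being spliced at subdivision vertices via Lemma \ref{2}. Keeping track of which occurrences of $x$ lie on which $W_i$, using the fact that a subdivision vertex $x=(u,v)\in A(G)$ has $\delta^{-}_{R_H^G(D)}(x)=\delta^{+}_{R_H^G(D)}(x)=1$ so that every walk through $x$ enters from $u$ and exits to $v$, is what makes the minimality arguments for V, VII, and IX precise.
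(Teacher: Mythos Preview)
Your proposal has the two key choices backwards, and this breaks conditions VII and IX. You take $\lambda$ \emph{minimal} with $V(W_1)\cap V(W_\lambda)\neq\emptyset$, and then pick $y$ arbitrarily in that intersection. But $a_2\in V(W_1)\cap V(W_2)$, so your $\lambda$ is always $2$ and the Flower argument you outline to ``establish existence of $\lambda$'' is vacuous. More seriously, minimality of $\lambda$ gives you no leverage on intersections of $P$ with the later walks $W_{\lambda+j}$: you claim such an intersection would ``let us shrink $\lambda$,'' but $\lambda+j>\lambda$, so nothing is contradicted. And an arbitrary $y$ in $V(W_1)\cap V(W_\lambda)$ does not prevent the $a_1y$-prefix of $W_1$ from meeting $W_{\lambda+2},W_{\lambda+3},\dots$, so IX fails.

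The paper does the opposite. First $y$ is chosen as the \emph{first} vertex along $W_1$ lying in $\bigcup_{h\geq 2}V(W_h)$; this is exactly what guarantees that the $a_1y$-portion of $W_1$ (hence $P$) is disjoint from every later walk. Only then does one set $I=\{j:y\in V(W_j)\}$ and take $\lambda=\max I$; the Flower Lemma is invoked precisely here, to show $I$ is \emph{bounded} (if $y$ lay in infinitely many $W_j$, one could select $2k+1$ such walks through $y$, apply Lemma~\ref{4}, and splice the resulting backward $H$-walk via Lemma~\ref{2} to contradict hypothesis~(2)). Maximality of $\lambda$ then gives $y\notin V(W_{\lambda+n})$ for all $n\geq 1$, and condition VII follows by a single splice: if $x\in V(W_{\lambda+m})$ with $m\geq 2$, concatenating the $a_{\lambda+2}x$-portion of $\bigcup_{h=\lambda+2}^{\lambda+m}W_h$ with $\widehat W$ at $x\in A(D)$ already yields $a_{\lambda+2}\to_H a_{\lambda+1}$, with no further Flower argument needed. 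Condition IX is then a consequence of the two careful choices of $y$ (first) and $\lambda$ (maximal), not of any minimality.
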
     

\begin{proof}

Let $y\in V(W_1)$ be the first vertex that appears in $\bigcup\limits_{h\in \mathbb{N}\setminus \{1\}} W_{h},$ it exists because $a_2\in V(W_{1})\bigcap V(W_2)$, notice that if $y=a_{f}$ for some $f\in \mathbb{N}\setminus \{1\}$, then $y\neq a_g$ for every $g\in \mathbb{N}\setminus \{f\}$.
\\

Consider the set $I=\{j\in \mathbb{N}: y\in V(W_j)\}.$
\\

\textbf{Claim 1.} $I$ has a maximum element. 

Proceeding by contradiction, suppose that $I$ has no  maximum element. Then we can choose a subset $\{r_{1},r_{2},\ldots,r_{2k+1}\}$ of $I$ such that $r_{1}<r_{2}<r_{3}<\cdots<r_{2k+1}$ and $y\neq a_{r_{i}}$ for every $i\in \{1,2,\ldots,2k+1\}$.
\\

\textbf{Claim 1.1.} For each $i\in \{1,2,\ldots,2k+1\}$, there exists an $a_{r_{i}}a_{r_{i+1}+1}$-$H$-walk in $R_{\text{\tiny H}}^{\text{\tiny G}}(D)$ that contains $y.$ 
\\

Let $i$ be an index in $\{1,2,\ldots,2k+1\}$. Then, by Lemma \ref{2}, $\bigcup\limits_{h=r_{i}+1}^{r_{i+1}} W_{h}$ is an \linebreak $a_{r_{i}+1}a_{r_{i+1}+1}$-$H$-walk in $R_{\text{\tiny H}}^{\text{\tiny G}}(D)$ that contains $y$ since $r_{i+1}$ belongs to $I$.
\\

Now, let $d_{1}=a_{r_{1}}$ and $d_t=a_{r_{t-1}+1}$ for every $t\in \{2,\ldots,2k+2\}$, then:  
\begin{enumerate}
\item $d_{i}\neq d_{j}$ for every $\{i,j\}\subseteq \{1,\ldots,2k+2\}$ with $i\neq j.$

\item There exists a $d_{i}d_{i+1}$-$H$-walk, say $C_{i},$ in $R_{\text{\tiny H}}^{\text{\tiny G}}(D)$ for each $i\in \{1,\ldots,2k+1\}.$

\item $y\in \bigcap\limits_{i=1}^{2k+1} [V(C_{i})\setminus \{d_{i},d_{i+1}\}].$   
\end{enumerate}

Then, it follows from Lemma's Flower that there exists a subset $\{\alpha,\beta\}\subseteq \{2,\ldots,2k+1\}$ with $\alpha \leq \beta -1,$ such that there is a $d_{\beta}d_{\alpha}$-$H$-walk in $R_{\text{\tiny H}}^{\text{\tiny G}}(D),$ say $W'$, notice that $W'$ is an $a_{r_{\beta -1}+1}a_{r_{\alpha -1}+1}$-$H$-walk in $R_{\text{\tiny H}}^{\text{\tiny G}}(D).$ Since $\alpha <\beta -1,$ we have that $\alpha -1<\beta -1$. Thus, $C=\bigcup\limits_{h=r_{\alpha -1}+1}^{r_{\beta -1}-1} W_{h}$ is an $a_{r_{\alpha -1}+1}a_{r_{\beta -1}}$-$H$-walk in $R_{\text{\tiny H}}^{\text{\tiny G}}(D),$ so $W^{'}\cup C$ is an $a_{r_{\beta -1}+1}a_{r_{\beta -1}}$-$H$-walk, and it is a contradiction with $a_{i+1} \centernot{\xrightarrow[R_{\text{\tiny H}}^{\text{\tiny G}}(D)]{H}} a_{i}$ for every $i\in \mathbb{N}$.
\\
Therefore, $I$ has maximum element, say $\lambda$.  Let $\widehat{W}$ be a fixed $H$-walk of the form $(y, W_{\lambda}, a_{\lambda +1})$ and let $\widehat{a}$ the first vertex in $(y, W_{\lambda}, a_{\lambda +1})$ such that $\widehat{a}\in A(D).$  Let $x\in V(W_{\lambda})$ and let 
\\

$$x=\left\{ \begin{array}{ccc} y &  \text{if} &  y\in A(D). \\ \widehat{a}&  \text{if}&  y\notin A(D).\end{array}\right.$$
\\

The choice of $x$ implies that $x\in A(D),$ notice that $x\notin V(W_{\lambda +n})$ for every $n\in \mathbb{N}\setminus \{1\}$, otherwise, if $x\in V(W_{\lambda +m})$ for some $m\in \mathbb{N}\setminus \{1\},$ then $(a_{\lambda +2},\bigcup\limits_{h=\lambda +2}^{\lambda +m}W_{h},x)\cup \widehat{W}$ is an $a_{\lambda +2}a_{\lambda +1}$-$H$-walk in $R_{\text{\tiny H}}^{\text{\tiny G}}(D)$, by Lemma \ref{2}, a contradiction to the hypothesis.
\\

$a_{\lambda +2} \centernot{\xrightarrow[R_{\text{\tiny H}}^{\text{\tiny G}}(D)]{H}} x;$ otherwise, if we suppose that there is an $a_{\lambda +2}x$-$H$-walk in $R_{\text{\tiny H}}^{\text{\tiny G}}(D)$, since $(x,W_{\lambda},a_{\lambda +1})$ is an $H$-walk in $R_{\text{\tiny H}}^{\text{\tiny G}}(D),$ it follows from Lemma \ref{2} that $a_{\lambda +2} \xrightarrow [R_{\text{\tiny H}}^{\text{\tiny G}}(D)] {H} a_{\lambda +1},$ a contradiction.
\\

We are going to define $P$ as an $a_{1}x$-path as follows:
\begin{enumerate}
\item If $y\in A(D)$ then $x=y,$ let $P$ be any $a_{1}x$-path contained in $W_{1}.$
\item If $y\notin A(D),$ let $P$ be any $a_{1}x$-path contained in some $a_{1}x$-$H$-walk of the form $(a_{1},W_{1},y)\cup (y,W_{\lambda},x).$ 
\end{enumerate}

Let $b_1=x,$ let $b_i=a_{\lambda +i}$ for each $i\in \mathbb{N}\setminus \{1\},$ notice that $(b_{m})_{m\in \mathbb{N}}$ is a sequence of different vertices of $D_R$ such that:
\\
\begin{enumerate}
\item $b_{i} \xrightarrow [R_{\text{\tiny H}}^{\text{\tiny G}}(D)] {H} b_{i+1}$ for every $i\in \mathbb{N}.$  
\item $b_{i+1}  \centernot{\xrightarrow[R_{\text{\tiny H}}^{\text{\tiny G}}(D)]{H}} b_{i}$ for every $i\in \mathbb{N}.$
\end{enumerate}

And there is a $b_{1}b_{2}$-$H$-walk in $R_{\text{\tiny H}}^{\text{\tiny G}}(D),$ say $W'_{1},$ that it is contained in some $H$-walk of the form $(x,W_{\lambda},a_{\lambda +1})\cup W_{\lambda +1}$ if $x\neq a_{\lambda +1},$ or in $W_{\lambda +1}$ if $x=a_{\lambda +1}.$ And for:
\\
\begin{center}
$W_{j}^{'}=\left \{\begin{array}{ccc} \widehat{W}\cup W_{\lambda +1}& if & j=1$ $and$ $x\neq a_{\lambda +1}.\\ W_{\lambda +1} & if & j=1$ $and$ $x=a_{\lambda +1}.\\ W_{\lambda +j} & if & j\neq 1.\end{array}\right.$
\end{center}
It holds by the choice of $x$ and the fact that $x\notin V(W_{\lambda +n})$ for every $n\in \mathbb{N}\setminus \{1\},$ that
\\
\begin{center}
$V(P)\cap V(W_{j}^{'})= \left\{ \begin{array}{cl}  \{x\} &$ $if$ $j=1.\\ \emptyset & in$ $other$ $case. \end{array}\right.$   
\end{center}
\end{proof}

\begin{obs}\label{6}
Notice that in Lemma \ref{5} the sequence $(b_{m})_{m\in \mathbb{N}}$ behaves exactly as the sequence $(a_{n})_{n\in \mathbb{N}}.$  
\end{obs}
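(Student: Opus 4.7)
The plan is minimal: the remark is essentially a re-reading of the conclusions already established for $(b_m)_{m \in \mathbb{N}}$ inside Lemma \ref{5}. The hypotheses of Lemma \ref{5} on the ambient sequence $(a_n)_{n\in\mathbb{N}}$ consist of three ingredients: that the terms be pairwise distinct vertices of $D_R$, that there be an $a_i a_{i+1}$-$H$-walk in $R_{\text{\tiny H}}^{\text{\tiny G}}(D)$ for each $i\in\mathbb{N}$, and that there be no $a_{i+1}a_i$-$H$-walk in $R_{\text{\tiny H}}^{\text{\tiny G}}(D)$ for each $i\in\mathbb{N}$. I would simply verify that each of these three ingredients is explicitly furnished for $(b_m)$ by the statement of the lemma.

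The distinctness of the $b_m$ is guaranteed by the opening clause of Lemma \ref{5}, ``a sequence $(b_m)_{m\in \mathbb{N}}$ of different vertices of $D_R$''. The remaining two ingredients, existence of the forward $H$-walks $b_m \xrightarrow[R_{\text{\tiny H}}^{\text{\tiny G}}(D)]{H} b_{m+1}$ and absence of the backward $H$-walks $b_{m+1} \xrightarrow[R_{\text{\tiny H}}^{\text{\tiny G}}(D)]{H} b_m$, are precisely the two assertions of item VIII. Hence the three hypotheses of Lemma \ref{5} transfer verbatim from $(a_n)$ to $(b_m)$, which is the content of the remark. One might additionally note, for completeness, that $b_1 = x$ lies in $V(D_R) = A(G)$: item I places $x$ in $A(D)$, and since $x$ is also a vertex of $R_{\text{\tiny H}}^{\text{\tiny G}}(D)$, the definition of partial subdivision forces $x\in A(G)$ (because $A(D)\setminus A(G)$ contributes only arcs, not vertices, to $R_{\text{\tiny H}}^{\text{\tiny G}}(D)$).

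There is no genuine obstacle here, as the remark is a cosmetic restatement; its point, however, is substantive. Once $(b_m)$ is recognized as satisfying the hypotheses of Lemma \ref{5}, the lemma may be iterated with $(b_m)$ in place of $(a_n)$, producing a further vertex $x'$ and a further sequence $(c_m)$, and so on. This recursion, combined with Lemma \ref{3} (which rules out infinite outward paths in $R_{\text{\tiny H}}^{\text{\tiny G}}(D)$ whenever $D$ has none), is what one would expect to be the engine driving the main $H$-kernel existence theorem for the partial subdivision digraph in Section \ref{S2}.
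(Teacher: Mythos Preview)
Your proposal is correct and matches the paper's treatment: the remark is stated without proof in the paper, precisely because the three hypotheses of Lemma~\ref{5} (distinctness in $D_R$, forward $H$-walks, absence of backward $H$-walks) are explicitly recorded for $(b_m)$ in the lemma's conclusion, especially item VIII. Your additional note that $x\in A(G)$ and your observation about the role this remark plays in iterating Lemma~\ref{5} within Lemma~\ref{7} are both accurate and helpful elaborations.
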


\begin{lema}\label{7}
Let $D$ be an $H$-colored digraph and $R_{\text{\tiny H}}^{\text{\tiny G}}(D)$ a partial subdivision of $D$ such that for some positive integer $k,$ $|V(R_{\text{\tiny H}}^{\text{\tiny G}}(D))|\geq 2k+3$ and $|\xi_{R_{\text{\tiny H}}^{\text{\tiny G}}(D)}^{+}(v)|\leq k$ for every $v$ in $V(R_{\text{\tiny H}}^{\text{\tiny G}}(D))$. Suppose that $(a_{n})_{n\in \mathbb{N}}$ is a sequence of different vertices of $D_R$ such that: 
\begin{enumerate}
\item There is an $a_{i}a_{i+1}$-$H$-walk, say $W_i$, in $R_{\text{\tiny H}}^{\text{\tiny G}}(D)$ for every $i\in \mathbb{N}$
\item $a_{i+1}\centernot{\xrightarrow [R_{\text{\tiny H}}^{\text{\tiny G}}(D)] {H}} a_{i}$ for every $i\in \mathbb{N}$. 
\end{enumerate}
Then, $R_{\text{\tiny H}}^{\text{\tiny G}}(D)$ has an infinite outward path.

\end{lema}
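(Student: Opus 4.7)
The plan is to construct an infinite outward path in $R_{\text{\tiny H}}^{\text{\tiny G}}(D)$ by iterating Lemma \ref{5}, using Remark \ref{6} to restart the argument each time. Setting $(b^{(0)}_m) := (a_m)$, $W^{(0)}_j := W_j$, and $x_0 := a_1$, I apply Lemma \ref{5} to $(b^{(n-1)}_m)$ at step $n \geq 1$ to obtain a vertex $x_n = b^{(n)}_1 \in A(D)$, an $x_{n-1} x_n$-path $P_n$ in $R_{\text{\tiny H}}^{\text{\tiny G}}(D)$, a new sequence $(b^{(n)}_m)$ with walks $W^{(n)}_j$, and an index $\lambda^{(n)} \geq 2$. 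By Remark \ref{6}, the new sequence again satisfies hypotheses (1) and (2) of Lemma \ref{5}, so the recursion proceeds without interruption for all $n \in \mathbb{N}$.

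The key claim is that $P_1 \cup P_2 \cup P_3 \cup \cdots$ is an infinite outward path. Clause IX of Lemma \ref{5} at step $n$ states that $V(P_n) \cap V(W^{(n)}_j)$ equals $\{x_n\}$ for $j = 1$ and is empty for $j \geq 2$. Combined with the recursive identities from the construction in Lemma \ref{5}, namely $W^{(k)}_j = W^{(k-1)}_{\lambda^{(k)} + j}$ for $j \geq 2$ and $V(W^{(k)}_1) \subseteq V(W^{(k-1)}_{\lambda^{(k)}}) \cup V(W^{(k-1)}_{\lambda^{(k)} + 1})$, together with the bound $\lambda^{(k)} \geq 2$, a short induction on $m - n$ gives the inclusion $V(W^{(m-1)}_j) \subseteq \bigcup_{i \geq 2} V(W^{(n)}_i)$ for every $j$ whenever $m \geq n + 2$. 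Since $V(P_m) \subseteq V(W^{(m-1)}_1) \cup V(W^{(m-1)}_{\lambda^{(m)}})$ by the construction of $P$ inside Lemma \ref{5}, combining the last two facts yields $V(P_n) \cap V(P_m) = \emptyset$ for $m \geq n + 2$ and $V(P_n) \cap V(P_{n+1}) \subseteq \{x_n\}$.

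It remains to guarantee that the concatenation is genuinely infinite. Clause IX at step $n$ gives $x_n \notin V(W^{(n)}_j)$ for $j \geq 2$, so the first vertex $y^{(n+1)}$ of $W^{(n)}_1$ that appears in $\bigcup_{h \geq 2} V(W^{(n)}_h)$ is strictly later than $x_n$; hence $x_{n+1} \neq x_n$ and $P_{n+1}$ is a non-trivial path for every $n \geq 1$. Concatenating the $P_n$'s at their shared endpoints $x_n$ therefore produces an infinite outward path in $R_{\text{\tiny H}}^{\text{\tiny G}}(D)$, as required. The main obstacle will be carrying out the $\lambda^{(n)}$-shift bookkeeping cleanly so as to establish the inclusion $V(W^{(m-1)}_j) \subseteq \bigcup_{i \geq 2} V(W^{(n)}_i)$; once this is in place, the conclusion follows directly from clauses VII--IX of Lemma \ref{5} together with Remark \ref{6}.
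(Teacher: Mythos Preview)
Your proposal is correct and follows essentially the same route as the paper: iterate Lemma~\ref{5} (justified by Remark~\ref{6}) to produce paths $P_n$ with endpoints $x_{n-1},x_n$, then use the recursive identities for the walks $W^{(n)}_j$ together with clause~IX to show that non-consecutive $P_n$'s are disjoint and consecutive ones meet only at $x_n$, so their concatenation is an infinite outward path. Your explicit verification that $x_{n+1}\neq x_n$ (hence each $P_{n+1}$ has positive length) is a point the paper asserts without argument, so your write-up is, if anything, slightly more careful on that detail.
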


\begin{proof}
We will use Lemma \ref{5} repeatedly in order to find paths $\{P_m\}$ for every $m \in \mathbb{N}$ such that their union $\cup_{m \in \mathbb{N}} P_m$ is the desired infinite outward path.
\\
We start by renaming the sequence $(a_{i}^{1})_{i\in \mathbb{N}}=(a_{i})_{i\in \mathbb{N}}$ and the walk $W_i=W_i^1$ to facilitate notation.
%Let  and for every $i\in \mathbb{N},$ $W_{i}^{1}$ is an $a_{i}^{1}a_{i+1}^{1}$-$H$-walk in $R_{\text{\tiny H}}^{\text{\tiny G}}(D).$ 
It follows from Lemma \ref{5} that there exist $P_{1}$ a path of length at least 1 in $R_{\text{\tiny H}}^{\text{\tiny G}}(D),$  $\lambda _{1}\in \mathbb{N}\setminus \{1\},$ a vertex $x_{1}\in V(W_{\lambda_{1}}^{1})$ and a sequence $(a_{r}^{2})_{r\in \mathbb{N}}$ of different vertices of $D_{R}$ satisfying the nine properties of Lemma \ref{5}.

Now, we can again apply Lemma \ref{5} but this time to the sequence $(a^2_n)_{n\in \mathbb{N}}$. Thus, by applying 
Lemma \ref{5} repeatedly, and by  Remark \ref{6}, it follows that for every $m\in \mathbb{N}$, we obtain a sequence of different vertices of $D_{R}$  $(a_{i}^{m})_{m\in \mathbb{N}}$ such that:

\begin{enumerate}
\item There is an $a_{i}^{m}a_{i+1}^{m}$-$H$-walk, say $W_i$, in $R_{\text{\tiny H}}^{\text{\tiny G}}(D)$ for every $i\in \mathbb{N},$
\item $a_{i+1}^{m}\centernot{\xrightarrow [R_{\text{\tiny H}}^{\text{\tiny G}}(D)] {H}} a_{i}^{m}$ for every $i\in \mathbb{N},$ 
\end{enumerate}

where if  $W_{i}^{m}$ is an $a_{i}^{m}a_{i+1}^{m}$-$H$-walk in $R_{\text{\tiny H}}^{\text{\tiny G}}(D),$ then  there exists $P_{m}$ a path of length at least 1 in $R_{\text{\tiny H}}^{\text{\tiny G}}(D),$  $\lambda _{m}\in \mathbb{N}\setminus \{1\},$ a vertex $x_{m}\in V(W_{\lambda_{m}}^{m})$ and a sequence $(a_{r}^{m})_{r\in \mathbb{N}}$ of different vertices of $D_{R}$ such that:
\begin{enumerate}
\item $x_{m}\in A(D).$
\item $P_m$ is an $a_{1}^{m}x_{m}$-path in $R_{\text{\tiny H}}^{\text{\tiny G}}(D).$
\item $P_{m}$ is contained in $\bigcup \limits_{i\in\mathbb{N}} W_{i}^{m}.$
\item $V(P_{m})\cap V((x_{m},W_{\lambda_{m}}^{m},a_{\lambda_{m}+1}^{m}))\cup W_{\lambda_{m}+1}^{m})=x_{m}.$
\item $V(P_{m})\cap V(W_{\lambda _{m}+t}^{m})=\emptyset$ with $t\in \mathbb{N}\setminus \{1\}.$
\item $a_{1}^{m+1}=x_{m}$ and $a_{j}^{m+1}=a_{\lambda _{m}+j}^{m}$ for every $j\in \mathbb{N}\setminus\{1\}.$
\item $ W_{1}^{m+1}=(x_{m}, W_{\lambda _{m}}^{m}, a_{\lambda_{m}+1}^{m})\cup W_{\lambda_{m}+1}^{m}$ is an $a_{1}^{m+1}a_{2}^{m+1}$-$H$-walk in $R_{\text{\tiny H}}^{\text{\tiny G}}(D).$
\item $W_{i}^{m+1}=W_{\lambda_{m}+i}^{m}$ is an $a_{i}^{m+1}a_{i+1}^{m+1}$-$H$-walk in $R_{\text{\tiny H}}^{\text{\tiny G}}(D)$ for every $i\in\mathbb{N}\setminus\{1\}.$
\item $a_{i+1}^{m+1}\centernot{\xrightarrow [R_{\text{\tiny H}}^{\text{\tiny G}}(D)] {H}} a_{i}^{m+1}$ for every $i\in \mathbb{N}.$
\end{enumerate} 

\textbf{Claim 1.} $\bigcup\limits_{i\in\mathbb{N}}P_{i}$ is an infinite outward path in $R_{\text{\tiny H}}^{\text{\tiny G}}(D).$
Let $i$ be an index in $\mathbb{N}.$ Since $P_{i}$ is an $a_{1}^{i}x_{i}$-path in $R_{\text{\tiny H}}^{\text{\tiny G}}(D), $ $P_{i+1}$ is an $a_{1}^{i+1}x_{i+1}$-path in $R_{\text{\tiny H}}^{\text{\tiny G}}(D)$ and $a_{1}^{i+1}=x_{i}$ we have that $P_{i}\cup P_{i+1}$ is a walk in $R_{\text{\tiny H}}^{\text{\tiny G}}(D),$ so $\bigcup\limits_{i\in\mathbb{N}}P_{i}$ is a walk in   $R_{\text{\tiny H}}^{\text{\tiny G}}(D).$
\\

By property 3, for every $i\in \mathbb{N},$ $P_{i+1}$ is contained in $\bigcup \limits_{n\in\mathbb{N}} W_{n}^{i+1}.$ By property 5, $V(P_{i+1})\cap V(W_{\lambda _{t}}^{i+1})=\emptyset$ with $t\in \mathbb{N}\setminus \{1\}$ and by property 4,   $V(P_{i+1})\cap V(W_{1}^{i+1})=x_{i}$ and $P_{i}\cup P_{i+1}$ is a walk in $R_{\text{\tiny H}}^{\text{\tiny G}}(D),$ thus we obtain that $P_{i}\cap P_{i+1}=x_{i}.$
\\

Now we will show that for every $\{i,j\}\subseteq \mathbb{N}$ such that $i+1<j,$ $V(P_{i})\cap V(P_{j})=\emptyset.$
By properties 7 and 8, we have that $\bigcup\limits_{n\in \mathbb{N}}W_{n}^{j}$ is contained in  $\bigcup\limits_{n\in \mathbb{N}}W_{n}^{i+2};$ thus by property 3, $P_{j}$ is contained in $\bigcup\limits_{n\in \mathbb{N}}W_{n}^{i+2}.$ 

Since by property 5, for every $t\in \mathbb{N}\setminus \{1\},$ $V(P_{i})\cap V(W_{t}^{i+1})=\emptyset,$  $ W_{1}^{i+2}=(x_{i+1}, W_{\lambda _{1}+1}^{i+1}, a_{\lambda_{i+1}}^{i+1})\cup W_{\lambda_{i+1}+1}^{i+1},$ $W_{t}^{i+2}=W_{\lambda_{i+1}+t}^{i+1}$  for every $\lambda_{i+1}\in\mathbb{N};$ thus,  $V(P_{i})\cap V(P_{j})=\emptyset.$
\\

Therefore, $\bigcup\limits_{i\in\mathbb{N}}P_{i}$ is an infinite outward path in $R_{\text{\tiny H}}^{\text{\tiny G}}(D).$  

\end{proof}

\begin{prop}\label{8}
Let $D$ a digraph without infinite outward paths and $R_{\text{\tiny H}}^{\text{\tiny G}}(D)$ a partial subdivision of $D.$ Suppose that; for some positive integer $k,$ $|V(R_{\text{\tiny H}}^{\text{\tiny G}}(D))|\geq 2k+3$ and $|\xi_{R_{\text{\tiny H}}^{\text{\tiny G}}(D)}^{+}(v)|\leq k$ for every $v$ in $V(R_{\text{\tiny H}}^{\text{\tiny G}}(D)),$ then $D_R$ has a kernel.
\end{prop}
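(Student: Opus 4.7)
The plan is to apply Theorem \ref{1} to $D_R$. This requires checking two properties: that $D_R$ is transitive, and that every infinite outward path in $D_R$ contains at least one symmetric arc.

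Transitivity of $D_R$ follows quickly from Lemma \ref{2}. Suppose $(a_i, a_j)$ and $(a_j, a_k)$ are arcs of $D_R$. By definition there are an $a_ia_j$-$H$-walk $W_1$ and an $a_ja_k$-$H$-walk $W_2$ in $R_{\text{\tiny H}}^{\text{\tiny G}}(D)$. Since $V(D_R) = A(G) \subseteq A(D)$, the common endpoint $a_j$ lies in $A(D)$, so Lemma \ref{2} gives that $W_1 \cup W_2$ is an $a_ia_k$-$H$-walk in $R_{\text{\tiny H}}^{\text{\tiny G}}(D)$, and hence $(a_i, a_k) \in A(D_R)$.

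For the symmetric arc condition I argue by contradiction. Suppose $(a_n)_{n\in\mathbb{N}}$ is an infinite outward path in $D_R$ with no symmetric arc. The vertices of a path are pairwise distinct, each arc $(a_i, a_{i+1})$ of $D_R$ witnesses an $a_ia_{i+1}$-$H$-walk $W_i$ in $R_{\text{\tiny H}}^{\text{\tiny G}}(D)$, and the absence of a symmetric arc translates to $a_{i+1} \centernot{\xrightarrow[R_{\text{\tiny H}}^{\text{\tiny G}}(D)]{H}} a_i$ for every $i$. These are exactly the hypotheses of Lemma \ref{7}, which then produces an infinite outward path in $R_{\text{\tiny H}}^{\text{\tiny G}}(D)$. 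This contradicts Lemma \ref{3} together with the assumption that $D$ has no infinite outward paths.

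Having verified both hypotheses, Theorem \ref{1} yields that $D_R$ has a kernel. The heavy technical lifting is already done inside Lemmas \ref{5} and \ref{7}; the only care needed here is to match the hypotheses correctly, in particular to notice that vertices of $D_R$ are elements of $A(G) \subseteq A(D)$ so that Lemma \ref{2} can be invoked in the transitivity step, and that distinctness of vertices along a directed path in $D_R$ gives the sequence of distinct vertices that Lemma \ref{7} requires.
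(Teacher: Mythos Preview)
Your proof is correct and follows essentially the same approach as the paper: verify transitivity of $D_R$ via Lemma \ref{2}, use Lemmas \ref{7} and \ref{3} to rule out asymmetric infinite outward paths in $D_R$, and then invoke Theorem \ref{1}. Your added remarks about why the hypotheses of Lemmas \ref{2} and \ref{7} are met (vertices of $D_R$ lie in $A(D)$, and path vertices are distinct) make the matching of hypotheses more explicit than in the paper.
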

\begin{proof}
Consider the following claims.
\\

\textbf{Claim 1.} $D_R$ has no asymmetric infinite outward path.
\\
It follows from lemmas \ref{3} and \ref{7}.
\\

\textbf{Claim 2.} $D_R$ is a transitive digraph.
\\
Let $\{u,v,w\}$ be a subset of $V(D_R)$ such that $\{(u,v),(v,w)\}\subseteq A(D_{R}).$ It follows from the definition of $D_R$ that $u\xrightarrow [R_{\text{\tiny H}}^{\text{\tiny G}}(D)] {H} v$ and $v\xrightarrow [R_{\text{\tiny H}}^{\text{\tiny G}}(D)] {H} w.$ Then by Lemma \ref{2} $u\xrightarrow [R_{\text{\tiny H}}^{\text{\tiny G}}(D)] {H} w.$ So, $(u,w)\in A(D_R).$ Thus, $D_R$ is a transitive digraph.
\\

Since every infinite outward path in $D_R$ has a symmetric arc and $D_R$ is a transitive digraph, it follows from Theorem \ref{1} that $D_R$ has a kernel.       
\end{proof}

Now we will determinate when the partial subdivision of $D,$ with respect to $H,$ has $H$-kernel by walks. More over, we will give sufficient conditions for $R_{\text{\tiny H}}^{\text{\tiny G}}(D)$ to have a unique $H$-kernel by walks.  

\begin{teo}\label{9}
Let $D$ be a digraph without infinite outward paths and $R_{\text{\tiny H}}^{\text{\tiny G}}(D)$ a partial subdivision of $D.$ Suppose that; for some positive integer $k,$ $|V(R_{\text{\tiny H}}^{\text{\tiny G}}(D))|\geq 2k+3$ and $|\xi_{R_{\text{\tiny H}}^{\text{\tiny G}}(D)}^{+}(v)|\leq k$ for every $v$ in $V(R_{\text{\tiny H}}^{\text{\tiny G}}(D)),$ then $R_{\text{\tiny H}}^{\text{\tiny G}}(D)$ has an $H$-kernel by walks.
\end{teo}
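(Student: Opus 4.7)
The plan is to deduce Theorem 9 directly from Proposition 8, taking the kernel $N\subseteq A(G)$ of the auxiliary digraph $D_R$ and showing that (after possibly enlarging it to account for sinks) it serves as an $H$-kernel by walks of $R_{\text{\tiny H}}^{\text{\tiny G}}(D)$.

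First I would verify $H$-independence of $N$ viewed as a subset of $V(R_{\text{\tiny H}}^{\text{\tiny G}}(D))$: if $a,b\in N$ admitted an $H$-walk from $a$ to $b$ in $R_{\text{\tiny H}}^{\text{\tiny G}}(D)$, then by the very definition of $D_R$ we would have $(a,b)\in A(D_R)$, contradicting that $N$ is independent in $D_R$. Next, for $H$-absorbance, take any $v\in V(R_{\text{\tiny H}}^{\text{\tiny G}}(D))\setminus N$. If $v\in A(G)$ then $v\in V(D_R)$, so absorbance of $N$ in $D_R$ gives some $u\in N$ with $(v,u)\in A(D_R)$, which is literally an $H$-walk $v\xrightarrow[R_{\text{\tiny H}}^{\text{\tiny G}}(D)]{H} u$. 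If $v\in V(D)=V(G)$ with $\delta_D^+(v)>0$, then by the definition of partial subdivision $\delta_G^+(v)>0$, so there is an arc $a=(v,w)\in A(G)$; since $a\in V(D_R)$, either $a\in N$ (in which case $(v,a)$ is a length-one $H$-walk into $N$), or $a$ is absorbed by some $u\in N$ via an $H$-walk $W$, and Lemma \ref{2} (applicable because $a\in A(D)$) lets me concatenate $(v,a)$ with $W$ to produce the required $H$-walk from $v$ into $N$.

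The remaining case is a sink of $D$, i.e.\ $v\in V(D)$ with $\delta_D^+(v)=0$. Such $v$ is also a sink of $R_{\text{\tiny H}}^{\text{\tiny G}}(D)$ (it has no outgoing arc at all), so no $H$-walk departs from $v$, and therefore $v$ must belong to any $H$-kernel by walks. To include sinks without breaking $H$-independence, I would work with the enlarged auxiliary digraph $D_R'$ whose vertex set is $A(G)\cup\{s\in V(D):\delta_D^+(s)=0\}$ with arcs again given by $H$-walk reachability in $R_{\text{\tiny H}}^{\text{\tiny G}}(D)$. The added vertices are sinks of $D_R'$, so any kernel of $D_R'$ automatically contains them and, by independence, none of its other members $H$-walks to any sink. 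Transitivity of $D_R'$ still follows from Lemma \ref{2}, since the middle vertex of any chain $u\to v\to w$ must have positive out-degree and hence lies in $A(G)\subseteq A(D)$; and the arguments of Lemmas \ref{3} and \ref{7} carry over verbatim to rule out asymmetric infinite outward paths in $D_R'$. Theorem \ref{1} then produces a kernel $N'$ of $D_R'$, and the case analysis above (applied to $N'$ in place of $N$) now covers every vertex of $R_{\text{\tiny H}}^{\text{\tiny G}}(D)$, showing $N'$ is the desired $H$-kernel by walks.

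The main subtle point is the last one: promoting the kernel of $D_R$ naively by simply adjoining the sinks of $D$ can fail, because a vertex $a\in N$ could $H$-walk to a sink and thereby violate $H$-independence. The fix is to put the sinks into the auxiliary digraph from the outset, so that the Richardson-type machinery underlying Proposition \ref{8} automatically produces a kernel compatible with them; everything else is a clean case analysis leveraging Lemma \ref{2} and the spanning property $\delta_G^+(v)=0\iff\delta_D^+(v)=0$.
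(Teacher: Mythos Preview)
Your proof is correct and follows the same overall strategy as the paper: obtain a kernel of the auxiliary reachability digraph via Proposition~\ref{8}, verify $H$-independence directly from the definition of $D_R$, and use Lemma~\ref{2} together with the spanning condition $\delta_G^+(v)=0\iff\delta_D^+(v)=0$ to establish $H$-absorbance for non-sink vertices. The one substantive difference is in how sinks are handled. The paper keeps $D_R$ as defined, takes its kernel $N_1$, and then surgically removes from $N_1$ the set $N_2=\{a\in N_1: a\xrightarrow[R_{\text{\tiny H}}^{\text{\tiny G}}(D)]{H} w\text{ for some sink }w\}$ before adjoining the set $B$ of sinks; the candidate $N=B\cup(N_1\setminus N_2)$ is then checked directly. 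Your fix---enlarging the auxiliary digraph to $D_R'$ so that sinks are present from the outset---achieves the same end and is equally valid: since sinks have out-degree zero in $D_R'$ they cannot occur on any infinite outward path, so Lemmas~\ref{3} and~\ref{7} and hence Theorem~\ref{1} apply unchanged, and any kernel of $D_R'$ automatically contains every sink while remaining $H$-independent from them. Your route trades the paper's explicit kernel modification for a brief re-verification that the Richardson-type machinery still applies to $D_R'$; neither is materially harder than the other.
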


\begin{proof}
It follows from Proposition \ref{8} that $D_R$ has a kernel $N_1$. Consider the following sets:

\begin{enumerate}
\item[$\bullet$] $B=\{w\in V(D): \delta ^{+}_{R_{\text{\tiny H}}^{\text{\tiny G}}(D)}(w)=0\}.$
\item[$\bullet$] $N_{2}=\{a\in N_{1}: a\xrightarrow [R_{\text{\tiny H}}^{\text{\tiny G}}(D)] {H} w$ for some $w\in B\}$.
\end{enumerate}  
Let $N=B\cup [N_{1}\setminus N_{2}].$
\\

\textbf{Claim 1.} $N$ is an independent set by $H$-walks in $R_{\text{\tiny H}}^{\text{\tiny G}}(D).$
\\

From the definition of $B$ we have that it is an independent set by $H$-walks and $B \centernot{\xrightarrow[R_{\text{\tiny H}}^{\text{\tiny G}}(D)]{H}} [N_{1}\setminus N_{2}],$ and by definition of $N_2,$ $[N_{1}\setminus N_{2}] \centernot{\xrightarrow[R_{\text{\tiny H}}^{\text{\tiny G}}(D)]{H}} B$.
\\

Therefore, it remains to prove that  $[N_{1}\setminus N_{2}]$ is an independent set by $H$-walks in $R_{\text{\tiny H}}^{\text{\tiny G}}(D).$ Proceeding by contradiction, suppose that there exists $\{u,v\}\subseteq [N_{1}\setminus N_{2}]$ such that $u\xrightarrow [R_{\text{\tiny H}}^{\text{\tiny G}}(D)] {H} v,$ since $\{u,v\}\subseteq  N_1$ and $N_{1}\subseteq A(D),$ from the definition of $D_R$ we have that $(u,v)\in A(D_{R})$ and it contradicts the fact that $N_1$ is an independent set in $D_R.$ Thus $[N_{1}\setminus N_{2}]$ is an independent set by $H$-walks in $R_{\text{\tiny H}}^{\text{\tiny G}}(D).$    
\\

\textbf{Claim 2.} $N$ is an absorbent set by $H$-walks in $R_{\text{\tiny H}}^{\text{\tiny G}}(D).$
\\ 
Let $u\in V(R_{\text{\tiny H}}^{\text{\tiny G}}(D))\setminus N.$ 

We will show that $u\xrightarrow [R_{\text{\tiny H}}^{\text{\tiny G}}(D)] {H} w$ for some $w\in N$, consider the following cases:
\\
\begin{enumerate}

\item[Case 1.] $u\in N_2$.

From the definition of $N_2$ there exists $w\in B$ such that $u\xrightarrow [R_{\text{\tiny H}}^{\text{\tiny G}}(D)] {H} w.$

\item[Case 2.] $u\in A(D)$ and $u\notin N_2$.

Since $N_1$ is a kernel of $D_R,$ then there exists a vertex $a\in N_1$ such that $(u,a)\in A(D_R),$ so  $u\xrightarrow [R_{\text{\tiny H}}^{\text{\tiny G}}(D)] {H} a.$ Suppose that $a\in N_2,$ then there exist $x\in B$ such that $a\xrightarrow [R_{\text{\tiny H}}^{\text{\tiny G}}(D)] {H} x$ and it follows from Lemma \ref{2} that $u\xrightarrow [R_{\text{\tiny H}}^{\text{\tiny G}}(D)] {H} x.$ 

\item[Case 3.] $u\in V(D).$

Since $u\notin B$ it follows that $\delta _{R_{\text{\tiny H}}^{\text{\tiny G}}(D)} ^{+} (u)\neq 0,$ by the definition of $R_{\text{\tiny H}}^{\text{\tiny G}}(D)$ at least one arc of $D$ that it is subdivided and it has $u$ as initial vertex, then there exists $z\in V(R_{\text{\tiny H}}^{\text{\tiny G}}(D))\cap A(D)$ such that $(u,z)\in R_{\text{\tiny H}}^{\text{\tiny G}}(D)).$ Suppose that $z\notin [N_{1}\setminus N_{2}],$ then it follows from the two previous cases that there is $x\in N$ such that $z\xrightarrow [R_{\text{\tiny H}}^{\text{\tiny G}}(D)] {H} x,$ and by Lemma \ref{2} $u\xrightarrow [R_{\text{\tiny H}}^{\text{\tiny G}}(D)] {H} x.$ 
\\

Thus $N$ is an absorbent set by $H$-walks in $R_{\text{\tiny H}}^{\text{\tiny G}}(D).$
\\

Therefore, it follows from the Claims 1, 2 and 3 that $N$ is an $H$-kernel by walks in $R_{\text{\tiny H}}^{\text{\tiny G}}(D).$     
\end{enumerate}
\end{proof} 

The following theorem shows that if we add the hypothesis that $D$ has no cycles, we can prove the uniqueness of the $H$-kernel by walks in $R_{\text{\tiny H}}^{\text{\tiny G}}(D).$ 

\begin{teo}\label{10}
 Let $D$ be a digraph without cycles and without infinite outward paths, and let $R_{\text{\tiny H}}^{\text{\tiny G}}(D)$ a partial subdivision of $D.$ Suppose that; for some positive integer $k,$ $|V(R_{\text{\tiny H}}^{\text{\tiny G}}(D))|\geq 2k+3$ and $|\xi_{R_{\text{\tiny H}}^{\text{\tiny G}}(D)}^{+}(v)|\leq k$ for every $v$ in $V(R_{\text{\tiny H}}^{\text{\tiny G}}(D))$, then $R_{\text{\tiny H}}^{\text{\tiny G}}(D)$ has a unique $H$-kernel by walks.
\end{teo}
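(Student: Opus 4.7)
The plan is to show that any $H$-kernel by walks $N$ of $R_{\text{\tiny H}}^{\text{\tiny G}}(D)$ must equal the explicit set
\[
N^{*} \;=\; B \;\cup\; \bigl\{\, a \in A(G) : a \text{ is a sink of } D_{R} \text{ and } a \centernot{\xrightarrow[R_{\text{\tiny H}}^{\text{\tiny G}}(D)]{H}} B \,\bigr\},
\]
so that any two $H$-kernels must coincide. Existence is already provided by Theorem \ref{9}, and when $D$ is acyclic, the construction carried out there produces precisely $N^{*}$.

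\textbf{Step 1.} First I would upgrade Proposition \ref{8} by observing that under the new hypothesis $D_{R}$ is not only transitive but also \emph{acyclic}. A putative cycle $a_{1}\to a_{2}\to\cdots\to a_{k}\to a_{1}$ in $D_{R}$ corresponds to $H$-walks $W_{i}$ from $a_{i}$ to $a_{i+1}$ in $R_{\text{\tiny H}}^{\text{\tiny G}}(D)$ (indices mod $k$); gluing them at the vertices $a_{2},\ldots,a_{k},a_{1}\in A(G)\subseteq A(D)$ (which is legitimate by Lemma \ref{2}) yields a closed walk in $R_{\text{\tiny H}}^{\text{\tiny G}}(D)$, and projecting to $D$ by replacing each subdivision vertex $(u,v)$ by the arc $(u,v)$ produces a closed walk in $D$, which contains a cycle and contradicts acyclicity. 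Being transitive and acyclic, $D_{R}$ has a unique kernel, namely its set of sinks.

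\textbf{Step 2.} Next I would show $N\cap V(D)=B$ for every $H$-kernel $N$. The inclusion $B\subseteq N$ is automatic, since any $w\in B$ satisfies $\delta_{R_{\text{\tiny H}}^{\text{\tiny G}}(D)}^{+}(w)=0$ and so cannot be absorbed. Conversely, suppose $v\in V(D)\setminus B$ lies in $N$. The condition $\delta_{G}^{+}(v)=0$ iff $\delta_{D}^{+}(v)=0$ in Definition \ref{defi5} supplies an arc $(v,y)\in A(G)$, whence $a=(v,y)\in V(R_{\text{\tiny H}}^{\text{\tiny G}}(D))\cap A(D)$ with $(v,a)\in A(R_{\text{\tiny H}}^{\text{\tiny G}}(D))$. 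Independence forces $a\notin N$, so some $H$-walk goes from $a$ to a vertex $u\in N$; Lemma \ref{2} (applied at $a\in A(D)$) produces a $vu$-$H$-walk in $R_{\text{\tiny H}}^{\text{\tiny G}}(D)$, and independence forces $u=v$. The resulting closed $H$-walk projects to a closed walk in $D$, contradicting acyclicity.

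\textbf{Step 3.} Finally I would identify $N\cap A(G)$. If $a\in N\cap A(G)$ were not a sink of $D_{R}$, pick $b\in A(G)$ with $(a,b)\in A(D_{R})$; then $b\notin N$ by independence, so $b$ reaches some $u\in N$ by an $H$-walk and Lemma \ref{2} at $b\in A(D)$ produces a $vu$-free $H$-walk from $a$ to $u$, forcing $u=a$ and again yielding a cycle in $D$ by projection. Likewise, if $a\in N$ and $a\xrightarrow[R_{\text{\tiny H}}^{\text{\tiny G}}(D)]{H} w$ for some $w\in B\subseteq N$, independence is violated. Conversely, a sink $a$ of $D_{R}$ with $a\centernot{\xrightarrow[R_{\text{\tiny H}}^{\text{\tiny G}}(D)]{H}}B$ has no $H$-walk to any element of $N\subseteq B\cup A(G)$ and so cannot be absorbed, whence $a\in N$. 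Combining the three steps gives $N=N^{*}$, which is determined solely by $D$, $G$, and $H$.

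\textbf{Main obstacle.} The delicate point throughout is that Lemma \ref{2} only legitimises concatenation of $H$-walks when the junction lies in $A(D)$: naively chaining ``$N$-absorption'' through a vertex of $V(D)$ need not yield an $H$-walk, so the standard uniqueness argument for kernels in transitive acyclic digraphs does not transfer verbatim. It is precisely the spanning hypothesis $\delta_{G}^{+}(v)=0\iff\delta_{D}^{+}(v)=0$ combined with acyclicity of $D$ that saves the day: every non-terminal $v$ has an outgoing arc into $A(G)\subseteq A(D)$, providing the detour through a subdivision vertex at which Lemma \ref{2} can be applied.
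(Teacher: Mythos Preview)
Your argument is correct and takes a genuinely different route from the paper's. The paper proceeds by the classical symmetric-difference method: given two $H$-kernels $M$ and $N$ and some $u_0\in N\setminus M$, it alternately uses absorbency of $M$ and of $N$ to build a sequence $u_0,u_1,u_2,\ldots$ with an $H$-walk $P_i$ from $u_{i-1}$ to $u_i$, where $u_{2i}\in N\setminus M$ and $u_{2i+1}\in M\setminus N$. Since $D$ is acyclic, $R_{\text{\tiny H}}^{\text{\tiny G}}(D)$ has no closed walks, which forces the $P_i$ to overlap only at their shared endpoints; their union is then an infinite outward path in $R_{\text{\tiny H}}^{\text{\tiny G}}(D)$, contradicting Lemma~\ref{3}. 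Notably the paper never concatenates these walks \emph{as $H$-walks}, only as ordinary walks, so Lemma~\ref{2} plays no role there and the obstacle you highlight does not actually arise in their approach. Your proof instead pins down the kernel explicitly as $N^{*}$, using Lemma~\ref{2} at subdivision vertices to manufacture the forbidden $H$-walks; this buys a concrete description of the unique $H$-kernel (and confirms it coincides with the one constructed in the proof of Theorem~\ref{9}), at the cost of a slightly longer case analysis.
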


\begin{proof}
The following Remark will be useful. 

 \begin{obs}\label{obsnociclos}
Since $D$ has no cycles, then $R_{\text{\tiny H}}^{\text{\tiny G}}(D)$ has no cycles. Which implies that $R_{\text{\tiny H}}^{\text{\tiny G}}(D)$ has no closed walks.
\end{obs}

From Theorem \ref{9} we have that $R_{\text{\tiny H}}^{\text{\tiny G}}(D)$ has an $H$-kernel by walks. Let $M$ and $N$ two $H$-kernels by walks in $R_{\text{\tiny H}}^{\text{\tiny G}}(D).$ We will show that $M=N.$

Let $u\in N.$ We are going to prove that $u\in M.$ Proceeding by contradiction, suppose that $u\notin M$ since $M$ and $N$ are $H$-kernels by walks in $R_{\text{\tiny H}}^{\text{\tiny G}}(D),$ then there exists $u_{1}\in M\setminus N$ such that $u\xrightarrow [R_{\text{\tiny H}}^{\text{\tiny G}}(D)] {H} u_{1},$ so there exists $u_{2}\in N\setminus M$ such that $u_{1}\xrightarrow [R_{\text{\tiny H}}^{\text{\tiny G}}(D)] {H} u_{2},$ hence there exists $u_{3}\in M\setminus N$ such that $u_{2}\xrightarrow [R_{\text{\tiny H}}^{\text{\tiny G}}(D)] {H} u_{3}$ and so on. For each $i\in \mathbb{N},$ let $P_i$ be a $u_{i}u_{i+1}$-$H$-walk in $R_{\text{\tiny H}}^{\text{\tiny G}}(D)$  where $u_{0}=u$ when $i=1.$ 
\\
Consider the following claim:
\\

\textbf{Claim.} For every $\{i,j\}\subseteq \mathbb{N}$ we have that:

\begin{center}
$V(P_{i})\cap V(P_{j})= \left\{ \begin{array}{ccl} u_{i+1} & \mbox{if} & j=i+1 \\ \emptyset & \mbox{if} & j\neq i+1 \end{array}\right.$
\end{center}

Let $\{i,j\}\subseteq \mathbb{N}.$ If $j=i+1$ and there exists $x\in V(P_{i})\cap V(P_{j})$ such that $x\neq u_{i+1}.$ Then $(x,P_{i},u_{i+1})\cup (u_{i+1},P_{i+1},x)$ is a closed walk in $R_{\text{\tiny H}}^{\text{\tiny G}}(D),$ and it contradicts Remark \ref{obsnociclos}. Thus $V(P_{i})\cap V(P_{j})=\{u_{i+1}\}.$
\\ 

If $j\neq i+1$ with $i+1<j,$ and there exists $x\in V(P_{i})\cap V(P_{j}),$ then $(x,P_{i},u_{i+1})\cup [\bigcup \limits _{n=i+1}^{j-1} P_{n}]\cup (u_{j},P_{j},x)$ is a closed walk in $R_{\text{\tiny H}}^{\text{\tiny G}}(D),$ which contradicts Remark \ref{obsnociclos}. Thus $V(P_{i})\cap V(P_{j})=\emptyset.$     
\\

So we have that $\bigcup \limits _{n\in \mathbb{N}} P_{n}$ is an infinite outward path in $R_{\text{\tiny H}}^{\text{\tiny G}}(D),$ which contradicts Lemma \ref{3}. 
\\

Thus, $u\in M$ and $M=N$. Therefore $R_{\text{\tiny H}}^{\text{\tiny G}}(D)$ has a unique $H$-kernel by walks. 
\end{proof}

\begin{teo}
Let $D$ be a digraph without infinite outward paths and $R_{\text{\tiny H}}^{\text{\tiny G}}(D)$ a partial subdivision of $D.$ Suppose that; for some positive integer $k,$ $|V(R_{\text{\tiny H}}^{\text{\tiny G}}(D))|\geq 2k+3$ and $|\xi_{R_{\text{\tiny H}}^{\text{\tiny G}}(D)}^{+}(v)|\leq k$ for every $v$ in $V(R_{\text{\tiny H}}^{\text{\tiny G}}(D)),$  then $0 \leq \kappa^{H}(D) \leq |A(G)|$.
\end{teo}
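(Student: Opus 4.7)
The plan is to derive the bound directly from Theorem \ref{9} together with Definition \ref{defi5} and the definition of $\kappa^{H}(D)$. The lower bound $0 \leq \kappa^{H}(D)$ is immediate, since $\kappa^{H}(D)$ is defined as the cardinality of a set of arcs and is therefore nonnegative.

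For the upper bound I would exhibit an explicit set $\Lambda \subseteq A(D)$ of size at most $|A(G)|$ whose subdivision in $D$ produces a digraph with an $H$-kernel by walks. The natural choice is $\Lambda := A(G)$. The key observation is that the $H$-colored digraph obtained from $D$ by subdividing every arc of $\Lambda$ coincides with the partial subdivision $R_{\text{\tiny H}}^{\text{\tiny G}}(D)$: each arc $a \in A(G)$ is replaced by a length-two $H$-walk through a new vertex identified with $a$, while the arcs in $A(D) \setminus A(G)$ remain untouched with their original coloring in $V(H)$. This matches items (1) and (2) of Definition \ref{defi5} verbatim, so the two digraphs are the same as $H$-colored digraphs.

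Having identified the subdivided digraph with $R_{\text{\tiny H}}^{\text{\tiny G}}(D)$, the remaining hypotheses of the present statement, namely that $D$ has no infinite outward paths, that $|V(R_{\text{\tiny H}}^{\text{\tiny G}}(D))| \geq 2k+3$, and that $|\xi_{R_{\text{\tiny H}}^{\text{\tiny G}}(D)}^{+}(v)| \leq k$ for every $v \in V(R_{\text{\tiny H}}^{\text{\tiny G}}(D))$, are precisely those of Theorem \ref{9}. Applying that theorem yields an $H$-kernel by walks in $R_{\text{\tiny H}}^{\text{\tiny G}}(D)$. Hence $\Lambda$ witnesses that subdividing at most $|A(G)|$ arcs of $D$ produces an $H$-kernel by walks, giving $\kappa^{H}(D) \leq |A(G)|$.

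There is essentially no substantive obstacle here: the technical work of Section \ref{S2}, encapsulated in Lemmas \ref{2}, \ref{3}, \ref{5}, \ref{7} and Proposition \ref{8}, has already been invested in proving Theorem \ref{9}, and the present result is simply the translation of that theorem into the language of the $H$-kernel subdivision number.
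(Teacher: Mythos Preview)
Your proposal is correct and matches the paper's approach: the paper states this theorem without an explicit proof, treating it as an immediate consequence of Theorem~\ref{9}, and your argument spells out precisely that deduction by taking $\Lambda = A(G)$ and identifying the resulting subdivided digraph with $R_{\text{\tiny H}}^{\text{\tiny G}}(D)$.
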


Notice that, if in Theorem \ref{9} we take $D$ as a spanning subdigraph of itself, then we get the subdivision of $D$ with respect to $H.$ Thus Theorem \ref{Hsubdivision} can be obtained as a corollary of Theorem \ref{9}.

\section{The $H$-kernel subdivision number for some families of digraphs}\label{S3}
In this section we will compute $\kappa^{H}(D)$ for a few families of digraphs when $H$ is a digraph in which all of its arcs are loops. Throughout this section  we shall assume that $H$ is the digraph such that $A(H)=\{(v,v): v\in V(H)\}.$ This means that the $H$-walks are monochromatic walks. Let $S$ the color set  of  an $m$-colored digraph $D$, we say that $D$ is \textit{hetero-chromatic} if, for every $i,j\in S,$ $i\neq j.$  

\begin{teo}\label{cn}
Every $m$-colored directed cycle $C_n$ which is not hetero-chromatic has an  $H$-kernel.
\end{teo}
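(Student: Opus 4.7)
The plan is to construct an $H$-kernel $K\subseteq V(C_n)$ directly from the monochromatic-run structure of $C_n$. Since $A(H)$ consists only of loops, an $H$-walk is just a monochromatic walk; in a non-monochromatic cycle these coincide with monochromatic paths (because no single color can be continued all the way around), so I use the two interchangeably. If $C_n$ is monochromatic then any singleton $\{v\}$ is an $H$-kernel: it is vacuously $H$-independent, and every other vertex reaches $v$ by traveling around the cycle in the single color.

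In the general case, I partition the arcs of $C_n$ into maximal monochromatic runs $R_0,R_1,\ldots,R_{r-1}$ in cyclic order, so that adjacent runs have distinct colors. Let $j_i$ denote the starting vertex of $R_i$ (so $j_{i+1}$ is its ending vertex) and let $\ell_i$ be the number of arcs in $R_i$. The hypothesis that $C_n$ is not hetero-chromatic is used to guarantee that some $\ell_{i^\star}\geq 2$, i.e., some two adjacent arcs share a color.

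For each run $R_i$, I introduce a binary label $y_i\in\{0,1\}$ with $y_i=1$ meaning $j_{i+1}\in K$. Independence requires $y_iy_{i+1}=0$ for every $i$, since otherwise $j_{i+1}\to j_{i+2}$ would be a monochromatic path through $R_{i+1}$. Absorption requires, for each $i$ with $\ell_i=1$, that the closure $\{j_i,j_{i+1}\}$ meet $K$, i.e.\ $y_{i-1}+y_i\geq 1$; and for each $i$ with $\ell_i\geq 2$ and $y_i=0$, I place the penultimate vertex $j_{i+1}-1$ of $R_i$ into $K$ so that the body of $R_i$ is absorbed, which then forces $y_{i-1}=0$ to avoid the monochromatic path $j_i\to j_{i+1}-1$ inside $R_i$. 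Combining these, going backward around the cycle $y_{i-1}$ is fully determined by $y_i$ and $\ell_i$: a flip $y_{i-1}=1-y_i$ when $\ell_i=1$, and a reset $y_{i-1}=0$ when $\ell_i\geq 2$. Cyclic consistency then amounts to finding a fixed point of the composition of these per-run maps on $\{0,1\}$.

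The existence of this fixed point is exactly where the non-hetero-chromaticity is used: once any $\ell_i\geq 2$ appears, the composition contains a reset and so becomes constant on $\{0,1\}$, yielding a fixed point automatically. Once a valid $y$ is chosen, the set $K$ consisting of the marked junctions $\{j_{i+1}:y_i=1\}$ together with the selected penultimate vertices is verified to be $H$-independent (two elements of $K$ cannot lie in the closure of the same run, and elements in different runs are separated by color changes) and $H$-absorbent (every non-$K$ vertex in $R_i$ reaches either $j_{i+1}$ or $j_{i+1}-1$ inside $R_i$, and each uncovered junction is absorbed by the $K$-representative of the following run). The main obstacle is the cyclic fixed-point argument; the verification of the kernel properties, while requiring careful bookkeeping between body-vertex choices and their neighboring junctions, is otherwise mechanical.
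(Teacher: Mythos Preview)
Your argument is correct. The run decomposition, the translation of $H$-independence and $H$-absorption into the binary labels $y_i$, and the observation that the backward recursion is a flip on short runs and a reset-to-zero on long runs are all sound. The only point worth making explicit is that your rule ``$y_{i-1}=0$ when $\ell_i\ge 2$'' also covers the case $y_i=1$: there the penultimate vertex is not inserted, but $j_i\in K$ together with $j_{i+1}\in K$ would already violate independence via the monochromatic run $R_i$, so the reset is forced either way. With that, the verification that $K=\{j_{i+1}:y_i=1\}\cup\{j_{i+1}-1:\ell_i\ge 2,\ y_i=0\}$ is an $H$-kernel goes through exactly as you sketch.

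Your route differs from the paper's. The paper argues by induction on the number of long monochromatic runs: the base case has a single long run and an explicit kernel, and the inductive step contracts one long run to a single arc, takes a kernel of the smaller cycle, and then adjusts it (with a case analysis on whether the contracted endpoint lies in the kernel and on the colors of neighboring arcs) after un-contracting. Your approach avoids the contraction and the case analysis entirely: once the problem is encoded in the $y_i$, the existence of a consistent labeling follows in one line from the fact that a composition of self-maps of $\{0,1\}$ containing a constant factor is itself constant, hence has a fixed point. The paper's method has the advantage of being closer in spirit to the reductions used later in Section~3 (where long monochromatic paths are repeatedly collapsed), while yours gives the kernel explicitly from the run structure.
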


\begin{proof}
    Proceed by induction over the number of monochromatic paths of length at least three.
    For the base case, assume that the directed cycle $C_n$ is $(x_1, \dots, x_n)$ such that the arcs  $(x_1,x_2), (x_2,x_3),\dots,(x_{k-1},x_k) $ have the same color and $(x_k,x_{k+1}) (x_n,x_1)$ have a different color. Consider the set of vertices $K=\{x_n, x_{n-2}, \dots, x_j\}$ where $j\in \{k,k-1\}$, to be specific, $j=k-1$ if $n-k$ is odd and $j=k$ if $n-k$ is even.
    Then $K$ is a $H$-kernel for $C_n.$ %Now for the uniqueness, assume that there exists another $H$-kernel $K'$ with $x_n\notin K'$. Then $x_1\in K'$  thus $ x_2, \dots, x_k \notin K'$ but this is a contradiction since $k \geq 3$, there is no vertex in $K'$ absorbing the vertex $x_2$. Thus $x_n \in K'$ which implies that $x_2, \dots, x_j \in K'$  and hence $K=K'.$
    
    Now let $G$ be obtained from $C_n$ by substituting a maximal monochromatic path of length at least three with an arc $(x,y).$ Then by induction hypothesis, $G$ has a unique $H$-kernel $K.$ If $y\in K$ then $K $ is also a $H$-kernel for $C_n.$ Otherwise $x \in K$ and we have two cases.
    \begin{enumerate}
        \item Assume $x$ is the final vertex of a monochromatic path $\gamma$ of length at least three. Then $(K\setminus \{x\})\cup N^-(x)$ is an $H$-kernel  for $C_n.$
        \item Let $x_1, x_2, \dots, x_k$ be the remaining vertices of $G$ with the property that the arcs $(x_1,x), (x_2,x_1), \dots, (x_m, x_{m-1})$ are such that no two arcs of the same color have a common vertex. Moreover, we can assume that  $x_m$ is the final vertex of a monochromatic path of length at least three for some $m\leq k$. Then $K$ contains the set $\{x,x_2, x_4, \dots, x_j\}$ with $j \in \{m,m+1\}$, to be specific, $j=m$ if $m$ is even and $j=m+1$ if $m$ is odd.
        Consider the set $M=(K\cup L \cup N^-(y))\setminus \{x,x_2, x_4, \dots, x_j\}$ where $N^-(y)$ is taken in $C_n$ and $$L=\left\{ \begin{array}{ccc}
            \{x_1, x_3, \dots, x_{m+1} & if & m \text{ is even}\\
             \{x_1, x_3, \dots, x_{m} & if & m \text{ is odd}
        \end{array} \right. $$
        Then $M$ is an $H$-kernel for $C_n$. 
         \end{enumerate}
        %For the uniqueness, assume that $M'$ is an other $H$-kernel for $C_n$ and consider the monochromatic path $\gamma$ of length at least three. If $M'$ contains $y$, the final vertex of $\gamma,$ then $M'$ is an $H$-kernel for $G$, thus $M'=K=M.$ Otherwise, $z \in M'$ where $(z,y)\in (A(C_n)).$ If $x$ is the last vertex of a monochromatic path of length at least three, then $w \in M'$ where $(w,x)\in A(C_n)$ and $ M'\setminus\{w,z\}\cup \{x\}$ is an  $H$-kernel for $G$ thus this $H$-kernel is unique and equal to $K$, and $M=M'.$ Now assume that $x$ is the final vertex of a maximal hetero-chromatic path $\{x_m, x_{m-1}, \dots, x_1, x\}$ with $(x_i, x_{i-1})\in A(C_n)$ thus $L=\{x_1, x_3 \dots, x_j\} \subset M'$ for some $j \in \{m,m+1\}.$ Then $M'\setminus L \cup \{x,x_2, x_4, \dots, x_{j'}\}$  with $j\neq j'\in \{m,m+1\}$ is the unique $H$-kernel of $G$ and this means that $M=M'.$
   
\end{proof}

\begin{coro}\label{uniciclo}
Let $D$ be a digraph containing one unique cycle. Then $D$ has an  $H$-kernel unless the cycle is directed, hetero-chromatic, of odd length, and every vertex of the cycle has out degree one.
\end{coro}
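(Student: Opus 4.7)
The plan is a case analysis on the structure of the unique cycle $C$ of $D$. Since $C$ is the only cycle in the underlying graph of $D$, each connected component of $D-V(C)$ is an oriented tree (a DAG) attached to $C$ at exactly one cut vertex. The general strategy is to produce an explicit $H$-kernel by first choosing a suitable partial kernel on $C$ and then extending it via a reverse-topological greedy pass on the tree components.

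\textbf{Case 1 (cycle not directed).} Then $D$ has no directed cycle at all, so $D$ is a DAG. A reverse-topological greedy --- process each vertex and add it to $K$ iff it has no $H$-walk to the kernel built so far --- produces an $H$-kernel: independence holds because in a DAG there is no path from a later-added descendant back to an earlier-added ancestor, and $H$-absorbency is immediate from the greedy rule.

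\textbf{Cases 2--4 (cycle directed).} Write $C=v_0v_1\cdots v_{n-1}$. We choose a partial kernel $K_C\subseteq V(C)$ according to the sub-case. In \textbf{Case 2} ($C$ not hetero-chromatic), Theorem \ref{cn} produces an $H$-kernel $K_C$ of $C$. In \textbf{Case 3} ($C$ hetero-chromatic, $n$ even), take $K_C:=\{v_0,v_2,\dots,v_{n-2}\}$; hetero-chromaticity forces every monochromatic walk inside $C$ to be a single arc, so $K_C$ is $H$-independent in $C$ and each $v_{2j+1}$ is $H$-absorbed by $v_{2j+2}\in K_C$. In \textbf{Case 4} ($C$ hetero-chromatic, $n$ odd, some cycle vertex with out-degree $\ge 2$), relabel so that $v_0$ has an out-arc $(v_0,u)$ with $u\notin V(C)$ and set $K_C:=\{v_2,v_4,\dots,v_{n-1}\}$; every $v_{2j+1}$ is again $H$-absorbed by $v_{2j+2}$, and the only cycle vertex left uncovered is $v_0$, whose absorption will be provided by a kernel vertex in the tree containing $u$ via an $H$-walk whose first arc is $(v_0,u)$. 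In each of Cases 2--4 we extend $K_C$ to a global $H$-kernel by processing the non-cycle vertices in reverse topological order of the DAG $D-V(C)$: add each non-cycle vertex $v$ to $K$ iff $v$ has no $H$-walk to $K_C$ together with the tree vertices already chosen.

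The main obstacle is the verification of global $H$-independence, specifically the requirement that no $v_i\in K_C$ has an $H$-walk into a newly added tree-kernel vertex (the greedy only precludes the reverse direction). Hetero-chromaticity of $C$ (Cases 3--4) is the decisive tool here: any $H$-walk traversing two or more cycle arcs would repeat a colour, which is impossible, so every potential $H$-walk from $K_C$ into the tree kernel uses at most one cycle arc, reducing the check to a single-arc crossing that the greedy can be made to avoid by a small local adjustment. In Case 2, we have flexibility in choosing $K_C$ (rotating around $C$) and exploit the non-heterochromatic coloring to dodge tree conflicts. Case 4 is the most delicate: the cycle itself has no $H$-kernel, so the tree at $u$ must simultaneously absorb $v_0$ via $(v_0,u)$ and avoid producing an $H$-walk from any other $v_{2j}\in K_C$ to a tree-kernel vertex. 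This is where the out-degree-$\ge 2$ hypothesis is indispensable, since its failure --- together with the cycle being directed, hetero-chromatic and odd --- is precisely the exceptional configuration excluded by the corollary.
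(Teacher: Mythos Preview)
Your strategy of first fixing an $H$-kernel $K_C$ on the cycle and then extending greedily over the tree pieces cannot work in general, and the step you yourself flag as ``the main obstacle'' is a genuine gap that your sketched fixes (``rotate $K_C$'', ``small local adjustment'') do not close. For Case~2, take $C=(v_0,v_1,v_2,v_3)$ with $(v_0,v_1),(v_1,v_2)$ red and $(v_2,v_3),(v_3,v_0)$ blue, and attach a single arc $(v_1,u)$ coloured green. The \emph{only} $H$-kernel of $C$ is $\{v_1,v_3\}$, so there is no room to ``rotate''; your greedy then forces the sink $u$ into $K$, and the arc $(v_1,u)$ destroys $H$-independence. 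The digraph does have an $H$-kernel, namely $\{u,v_0\}$, but $\{v_0\}$ is \emph{not} an $H$-kernel of $C$: the restriction of a global $H$-kernel to the cycle need not be a cycle-kernel at all, so any argument that commits to $K_C$ before looking at the trees is doomed. The same failure hits your Case~4: with $C=(v_0,v_1,v_2)$ coloured red, blue, green and a pendant $(v_0,u)$ also coloured green, your $K_C=\{v_2\}$ together with the forced $u$ yields the monochromatic walk $v_2\to v_0\to u$ (the actual $H$-kernel here is $\{u,v_1\}$).

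More seriously, replace that last pendant by a two-arc path $(v_0,u),(u,w)$ in two \emph{new} colours (yellow, purple). Then $w$ must lie in any $H$-kernel, $u$ is absorbed by $w$, and no cycle vertex reaches $\{w\}$ monochromatically; one is left with the usual odd-cycle contradiction on $\{v_0,v_1,v_2\}$, so this digraph has \emph{no} $H$-kernel. Yet $v_0$ has out-degree $2$, so the exception clause of the corollary is not met. The hypothesis ``every vertex of the cycle has out-degree one'' is therefore too weak to characterise the bad case, and you should clarify with the authors what the intended condition is before trying to prove it.
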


%escribir la siguiente proposicion en general para graficas G_i
%\begin{prop}
%Let $D$ be a digraph such that all of its cycles are disjoint. If none of the cycles are directed, hetero chromatic and of odd length, then $D$ has an  $H$-kernel.
%\end{prop}
%\begin{proof}
    %Proceed by induction on the number of cycles. If there exists one unique cycle, by Corollary \ref{uniciclo} $D$ has an  $H$-kernel. Now consider a digraph having $k$ directed cycles. We have two cases.
    %\begin{enumerate}
        %\item If $D$ contains a cycle $c$ which corresponds in $T(D)$ to a vertex having outdegree zero, then we consider $K_c$ an  $H$-kernel for this cycle and take the digraph $D\setminus N^{-}(K_c)$ which has a kernel $K_0$ by induction hypothesis. Then $K= K_0\cup K_c$ is an  $H$-kernel for $D$.
        %\item Otherwise consider the set $Z=\{v\in V(D): \delta^+(v)=0\}$. Then $D\setminus N^{-}(Z)$ is a disconnected digraph where each connected component $D_i$, $i\in \{1,\dots,m\}$ has less cycles than $D$, thus, by induction hypothesis, each $D_i$ has an  $H$-kernel $K_i$. Then $K=Z\bigcup_{i=1}^mK_i$ is an  $H$-kernel for $D$.
    %\end{enumerate}
%\end{proof}

\begin{defi}
    We say that a digraph $G$ is a theta-type graph if it is a subdivision of a directed cycle with a chord.
\end{defi}

\begin{teo}
    Let $G$ be a theta-type graph. Then $\kappa^H (G) \leq 1.$
\end{teo}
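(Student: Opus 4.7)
By definition $G$ is a subdivision of a directed cycle together with one additional chord-arc, so it decomposes as $G = P_1 \cup P_2 \cup P_3$ into three internally disjoint directed paths meeting in two vertices $u$ and $v$, where (after relabelling) $P_1$ and $P_2$ run from $u$ to $v$ and $P_3$ runs from $v$ to $u$. The two directed cycles of $G$ are then $C_1 = P_1 \cup P_3$ and $C_2 = P_2 \cup P_3$. If $G$ already has an $H$-kernel there is nothing to prove, so from now on I would assume it does not, and my goal is to exhibit a single arc whose subdivision produces an $H$-kernel.

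First I would dispose of the case where one of the two cycles, say $C_1$, is not hetero-chromatic. By Theorem~\ref{cn} that cycle admits an $H$-kernel $K_1$. I would then extend $K_1$ to an $H$-kernel of $G$ by adjoining a minimal set $K_2 \subseteq V(P_2)\setminus\{u,v\}$ chosen to absorb, via monochromatic walks in $G$, every vertex of $P_2$ not already absorbed by $K_1$, while keeping $K_1 \cup K_2$ monochromatically independent. Because $P_2$ is a directed path whose vertices are linearly ordered by reachability, this greedy selection is straightforward unless some vertex of $K_1$ reaches a candidate vertex of $P_2$ by a monochromatic walk in $G$; in that event, a single subdivision of the arc of $P_2$ at which the conflicting walk enters $P_2$ breaks the offending monochromatic walk while, by the fact that $H$-colored subdivision preserves colour, letting the required absorbing walks survive.

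In the remaining case, both cycles are hetero-chromatic. I would then subdivide any internal arc $e$ of $P_1$. Since the definition of an $H$-colored subdivision copies the colour of $e$ onto the two new arcs, the new $C_1$ contains a monochromatic path of length two and so is no longer hetero-chromatic. Theorem~\ref{cn} now applies to this new $C_1$, and the argument of the previous case produces an $H$-kernel of $G_e$ with no further subdivision needed, so the total number of subdivided arcs is one.

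The hard part will be the extension argument in the first case: one must check that the required $H$-independence between $K_1$ and the added set $K_2 \subseteq P_2$ can always be preserved using at most one subdivision. Concretely, I would need to control monochromatic walks that traverse $P_3$ (which is shared by both $C_1$ and $C_2$) before entering $P_2$, and argue that any such conflict is localized to a single arc whose subdivision simultaneously breaks the conflict and does not destroy the absorbing walks required for $K_1 \cup K_2$ to remain absorbent in $G$.
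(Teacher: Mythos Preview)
Your structural decomposition of $G$ into $P_1,P_2,P_3$ with the two cycles $C_1,C_2$ is fine and matches the paper's setup. But the argument has two real gaps.

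In your Case~2 you spend your one subdivision making $C_1$ non-hetero-chromatic and then appeal to Case~1, asserting that ``no further subdivision [is] needed''. But your own Case~1 explicitly allows one subdivision on $P_2$ when the extension of $K_1$ creates a conflict, and nothing about having already subdivided inside $P_1$ rules such a conflict out: that subdivision does not touch $P_2$ or $P_3$, so a monochromatic walk from $K_1$ into $P_2$ can still appear. As written, the two cases could therefore consume two subdivisions in total. You would need a separate argument that the particular $H$-kernel of the subdivided $C_1$ produced by Theorem~\ref{cn} always extends over $P_2$ for free, and you have not given one.

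Your Case~1 is, as you acknowledge, only a sketch. The assertion that ``any such conflict is localized to a single arc whose subdivision simultaneously breaks the conflict and does not destroy the absorbing walks'' is exactly the content of the theorem in this case. There may be several vertices to add along $P_2$, and the monochromatic walks from $K_1$ that threaten independence can enter at $u$ and run through various initial segments of $P_2$, or exit through $v$ via $P_3$; showing that a single subdivision handles all of them simultaneously is not a one-line observation. The paper does not use a greedy extension at all: it inducts on the number of colours appearing on the chord-path $\gamma$, and in each step it explicitly modifies the kernel by removing a vertex, replacing it by its in-neighbour, and propagating this swap along the cycle until a monochromatic segment of length at least two absorbs the shift. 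The specific arc to subdivide is identified separately in each of several sub-cases. That case analysis is long and is precisely the work you have deferred.
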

\begin{proof}
Notice first that the digraph $G$ can be obtained from gluing a directed path $\gamma$ between two vertices $x$ and $y$ of a directed cycle $D.$ %Let $w$ and $u$ be the vertices of $D$ such that $(w,x) ,(x,u)\in A(D).$
Proceed by induction over the number of colors in $\gamma$. For the base case assume that $\gamma$ is monochromatic. If $D$ is hetero-chromatic, of odd length and $(u,x)$ has a different color than $\gamma$, let 

$e=\left\{ 
\begin{array}{cc}
    (u,x) &\text{ if }d(x,y)\text{ is odd,} \\
     (x,w) & \text{ if }d(x,y)\text{ is even,}
 \end{array} \right. $ for $u,w \in V(D)$. 
 
 Then $K= \{v \in V(D_e): d(v,y) \text{ is even}\}$ is an $H$-kernel for $G_e$. Now assume that $(u,x)$ has the same color as $\gamma.$ If $d(x,y)$ is odd, then $K= \{v \in V(D_e): d(v,y) \text{ is even}\}\setminus \{x\}$ is an $H$-kernel for $G$. If $d(x,y)$ is even, let $e=(s,u)\in A(D)$ and subdivide this edge by adding the vertex $v$. Then $K= \{v \in V(D_e): d(v,y) \text{ is even}\}\setminus \{x\}$ is an $H$-kernel for $G_e.$

Now we can assume by Theorem \ref{cn}, that $D$ has  an $H$-kernel $K$. Consider first the case when $\gamma=(x,y)$. Then $K$ is an $H$-kernel for $G$ unless $z,y \in K$ where $z$ is such that there exists a 
monochromatic path from $z$ to $x$ which has the same color as $(x,y)$ (in particular we can have $z=x$). If this last case holds, assume $(w,z)\in A(D)$ and subdivide this edge by adding the vertex $v$. Then 
$K\setminus \{z\}\cup\{v\}$ is an $H$-kernel for $G_{(w,z)}.$

This means that we can assume that $\gamma$ has length at least three. Notice that $K$ is either absorbent or independent (or both) in $G$. If $K$ is independent but not absorbent in $G$, we have that $ y \notin K$ and if $z \in K$ is the vertex that absorbs $y$,  then the $yz$-path has a different color than $\gamma$. This means that either $K\cup \{v\}$ is an $H$-kernel for $G$ where $(v,y)\in A(\gamma)$ or there exists a monochromatic path from $t$ to $x$ with the same color as $\gamma$ for some $t\in K$ (we allow $t=x$ if $ x \in K$). Assume that $(a,b), (b,t)\in A(D)$. If $a \notin K$, then $K\setminus \{t\} \cup \{v,b\}$ is an $H$-kernel for $G$. Otherwise, subdivide the edge $e=(b,t)$ by adding a new vertex $c$. Then  $K\setminus \{t\} \cup \{v,c\}$ is an $H$-kernel for $G_c.$

If $K$ is absorbent but not independent in $G$ we have that there exists a $wz$-path in $G$ which is monochromatic and contains $\gamma$ with $z,w\in K.$ Let $(u,w)\in A(D)$ and subdivide this edge by adding the vertex $v.$ Then $K\setminus\{w\}\cup \{v\}$ is an $H$-kernel for $G_{(u,w)}$.

\

For the inductive step, assume that $\gamma= \gamma_1 \cup \dots \cup \gamma_k$ where each $\gamma_i$ is a maximal monochromatic path. Let $G'$ be the graph obtained by coloring $\gamma_{1}$ with the color of $\gamma_2$. Then by induction hypothesis, either $G'$ has an $H$-kernel, or there exists an edge $e$ such that $G'_e$ has an $H$-kernel. In any case let $J \in \{G', G'_e\}$ be the digraph with the $H$-kernel $K$. Now consider $K$ as a subset of the vertices of $G$ or $G_e$ respectively. Let $z$ denote the first vertex of $\gamma_{2}$, thus $z$ is absorbed by $K$ or $ z \in K$. 

%\woofles{si $z\in K$ se puede romper la independencia }.

 If $ z\notin K$, notice that $ x\notin K$ and assume first that $\gamma_{1}$ has a different color than $(w,x)$. If $\gamma_1$ has length at least three, take $v \in N^-(z),$ thus $K\cup \{v\}$ is an $H$-kernel for $G$ or $G_e$ respectively.  If $\gamma_{1}=(x,z)$ and $x$ is absorbed by $K$ we are done.  If $J=G'$ we can subdivide $(x,z)$ by adding a new vertex $u$ and thus $K\cup \{u\}$ is an $H$-kernel for $G_{(x,z)}.$   If $J=G'_e$, $\gamma_1=(x,z)$ but $ x $ is absorbed by some vertex in $K$ we are done.
% \woofles{este argumento funciona cuando el color de $\gamma_1$ es distinto del color de $N^{-}(x)$ }
 
If $\gamma_1$ and $(w,x)$ have the same color but $w \notin K$, then if $x\in K$ in which case $K\setminus \{x\} \cup N^-(z)$ is an $H$-kernel for $G$ or $G'$ respectively. If $x\notin K$, then $w$ and $x$ are absorbed by some vertex in $K$ in which $K\cup N^-(z)$ is an $H$-kernel for $G$ or $G'$ respectively when $ x \neq N^-(z)$ and $ K$ is a kernel for $G$ or $G'$ respectively when $x=N^-(z).$

If $ z \in K$ then either $K$ is also an $H$-kernel for $G$ or $G_e$ respectively or $K$ is absorbent but not independent.
Thus we have three cases remaining:
\begin{itemize}
    \item $z\in K$, $w=N^-(x)\in K$ and the $wz$-path is monochromatic
    \item $z\notin K$, $\gamma_1$ and $(w,x)$ have the same color with $w\in K$
    \item $z\notin K$, $J=G'_e$, $(x,z)=\gamma_1$ and $(w,x)$ have different colors, and $x$ is not being absorbed by any vertex in $K$
    
\end{itemize}

  In the second case, since $x \notin K$ and $x$ is not absorbed by any vertex of $K,$ neither is its in-neighbour thus $w=N^-(x)\in K$ in all three cases. In cases one and two, if $J=G'$, we may subdivide $d=(N^-(w), w)$ by adding the vertex $u$ and $K\setminus w \cup \{u, N^-(z)\}$ (if $ z \notin K$) or $K\cup u\setminus w $ (if $z \in K$) is an $H$-kernel for $G_d$ when $d$ has a different color than $(w,x).$
  
  In all three cases, if $d$ and $(w,x)$ have the same color, $K\setminus w \cup N^-(z)$ is an $H$-kernel if $ z\notin K$ and $K\setminus w$ is an $H$-kernel if $z\in K.$  Thus we may assume that in all three cases $J=G'_e.$
  
  In all three cases we want to remove $w$ from $K$, and add new vertices to $K\setminus w$ to obtain an $H$-kernel. In the cases when $z\notin K$ we are going to add $N^-(z)$ to $K.$ We now have several cases:
\begin{enumerate}
    \item the $yw$-path contains a monochromatic path of length at least three
    \item both $\gamma$ and the $xy$-path contained in $D$ contain a monochromatic path of length at least three
    \item $G_e$ contains a hetero-chromatic cycle which is either $D$ or $\gamma \cup \alpha$ where $\alpha $ is the $yx$-path.
\end{enumerate}
In Cases 1. and 2. we remove vertices from $K$ and substitute them with their in-neighbour (or in-neighbours if it is  the vertex $y$) one by one starting with $w$ until we arrive at a monochromatic path and we obtain an $H$-kernel.
%\woofil{Aqui tal vez haya que explicar por separado los casos 1 y 2 ya que en el caso 2 tenemos una bifurcacion, falta decir que estamos haciendo }

In Case 3., if the directed cycle is of even length we can do the process described in Case 2. but instead of stopping at a monochromatic path of length at least three, we continue until we obtain an $H$-kernel (in one direction we do encounter a monochromatic path of length at least three).
Otherwise we forget about $K$ and we are going to build an $H$-kernel as follows. Let $C\in \{D, \gamma \cup \alpha\}$ be the directed cycle and let $v$ be a vertex which absorbs $x$ and is not in $C$. Since  $C\setminus x$ is a path, we have that  $C\setminus x$ has an $H$-kernel $M$. Since $G_e \setminus C$ does contain a monochromatic path of length at least three, we can extend the set $M\cup \{v\}$ to  an $H$-kernel of $G_e$ unless the $vy$-path is monochromatic in which case $G$ or $G'$ has a kernel.

\end{proof}

\begin{defi}
  Let $J$ be a digraph and $G_1,\dots, G_m$ be digraphs. Define a split-root product of $J$ with the family $\{G_i\}_{i=1}^m$ as follows. First select $m$ vertices of $J$, $v_1, \dots, v_m$ and let $J'$ be the digraph obtained from $J$ by splitting every distinguished vertex $v_i$ into $d(v_i)$ vertices of degree one. For each $G_i$ select $d(v_i)$ vertices and identify each of them with one of the $d(v_i)$ vertices of degree one in $J'$ which correspond to $v_i$ in $J$ for $1 \leq i \leq m.$ The obtained graph is a split-root product of $J$ with $\{G_i\}_{i=1}^m.$ See for example Figure \ref{splitroot}.
\end{defi}

\begin{figure}[h!]
	\centering
	\begin{tikzpicture}[every node/.style={circle, draw, scale=.6}, scale=1.0, rotate = 180, xscale = -1]

		\node (1) at ( 10.81, 3.47) {};
		\node (2) at ( 11.32, 4.21) {};
		\node (3) at ( 10.7, 4.95) {};
		\node (4) at ( 9.64, 4.96) {};
		\node (5) at ( 9.03, 4.11) {};
		\node (6) at ( 9.57, 3.38) {};
		\node (7) at ( 10.08, 4.19) {};
		\node (8) at ( 7.11, 2.79) {};
		\node (9) at ( 6.56, 3.57) {};
		\node (10) at ( 5.54, 3.52) {};
		\node (11) at ( 4.95, 2.65) {};
		\node (12) at ( 5.59, 1.86) {};
		\node (13) at ( 6.59, 1.94) {};
		\node (14) at ( 7.83, 3.48) {};
		\node (15) at ( 1.84, 4.06) {};
		\node (16) at ( 2.16, 3.17) {};
		\node (17) at ( 3.26, 3.29) {};
		\node (18) at ( 3.6, 4.27) {};
		\node (19) at ( 3.15, 5.14) {};
		\node (20) at ( 2.11, 5.27) {};
		\node (21) at ( 2.51, 4.64) {};
		\node (22) at ( 9.41, 0.62) {};
		\node (23) at ( 8.87, 1.22) {};
		\node (24) at ( 9.88, 1.23) {};
		\node (25) at ( 7.25, 1.23) {};
		\node (26) at ( 7.85, 0.73) {};
		\node (27) at ( 11.2, 2.63) {};
		\node (28) at ( 1.89, 2.21) {};
		\node (29) at ( 4.65, 5.32) {};

		\draw[->] (22) -- (23);
		\draw[->] (24) -- (22);
		\draw[->] (23) -- (24);
		\draw[->] (1) -- (6);
		\draw[->] (2) -- (1);
		\draw[->] (3) -- (2);
		\draw[->] (4) -- (3);
		\draw[->] (5) -- (4);
		\draw[->] (6) -- (5);
		\draw[->] (8) -- (13);
		\draw[->] (9) -- (8);
		\draw[->] (10) -- (9);
		\draw[->] (11) -- (10);
		\draw[->] (12) -- (11);
		\draw[->] (13) -- (12);
		\draw[->] (18) -- (17);
		\draw[->] (19) -- (18);
		\draw[->] (20) -- (19);
		\draw[->] (15) -- (20);
		\draw[->] (16) -- (15);
		\draw[->] (17) -- (16);
		\draw[->] (7) -- (6);
		\draw[->] (3) -- (7);
		\draw[->] (21) -- (15);
		\draw[->] (19) -- (21);
		\draw[->] (14) -- (8);
		\draw[->] (5) -- (14);
		\draw[->] (25) -- (13);
		\draw[->] (25) -- (26);
		\draw[->] (26) -- (23);
		\draw[->] (17) -- (11);
		\draw[->] (29) -- (19);
		\draw[->] (16) -- (28);
		\draw[->] (1) -- (27);

	\end{tikzpicture}
	\caption{A split-root product of a tree with two directed cycles and two theta-type graphs}
	\label{splitroot}
\end{figure}
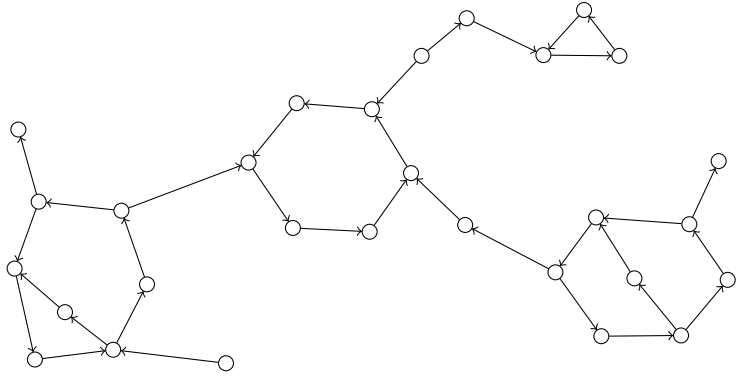

Denote by $N_c^-(v)$ and by $N_c^+(v)$ the color open neighbourhoods of a vertex $v$, this is $$N_c^-(v)=\{w \in V(G): \text{there exists a monochromatic path from }w \text{ to }v\}$$ and $$N_c^+(v)=\{w \in V(G): \text{there exists a monochromatic path from }v \text{ to }w\}$$ and analogously we can define the closed color neighborhoods as $N_c^+[v]=N_c^+(v)\cup \{v\}$ and $N_c^-[v]=N_c^-(v)\cup \{v\}.$ Given a subset $A \subset V(G)$ we denote $N_c^-[A]=\cup_{a\in A}N_c^-[A]$ and $N_c^+[A]=\cup_{a\in A}N_c^+[A]$.

\begin{teo}
    Let $G$ be a split-root product of a tree $T$ with $\{G_i\}_{i=1}^n.$
    Then $\kappa^H(G) \leq \sum\limits_{i=1}^{n} \kappa^H(G_i).$
\end{teo}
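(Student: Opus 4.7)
The natural try is to subdivide, for each $i$, a minimum arc set $\Lambda_i\subseteq A(G_i)$ with $|\Lambda_i|=\kappa^{H}(G_i)$ so that $(G_i)_{\Lambda_i}$ admits an $H$-kernel by walks $K_i$, and then set $\Lambda=\bigcup_i\Lambda_i\subseteq A(G)$. Since the $G_i$ share no arcs (only identification vertices), $|\Lambda|=\sum_i\kappa^{H}(G_i)$, so it suffices to construct an $H$-kernel of $G_{\Lambda}$ without subdividing any further arc.

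I would proceed by induction on $n$. For $n=0$, $G=T$ is a finite directed tree; since $T$ is acyclic, a greedy construction in reverse topological order (every $H$-walk in $T$ is finite) produces an $H$-kernel by walks. For the inductive step, pick $v_j$ to be a leaf of the Steiner tree of $\{v_1,\dots,v_n\}$ inside $T$. Then $T\setminus\{v_j\}$ splits into one ``distinguished'' branch $T_0$ carrying the remaining $v_i$'s and (possibly) pendant branches $T_1,\dots,T_r$ containing no distinguished vertex, each attached to $G_j$ at a single identification vertex. Let $a\in V(G_j)$ be the identification vertex through which $T_0$ meets $G_j$. Deleting $V(G_j)\setminus\{a\}$ together with the vertices of the pendant branches from $G$ yields a smaller split-root product $G'$ on $n-1$ blocks, which by the induction hypothesis has an $H$-kernel $K'$ of $G'_{\Lambda'}$ with $\Lambda'=\bigcup_{i\neq j}\Lambda_i$.

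It then remains to merge $K'$ with $K_j$ across $a$ and to absorb the pendant branches into the union kernel. The pendant branches are acyclic subtrees attached to $G_j$ at identification vertices, so each can be handled greedily from its leaves inward provided its root in $G_j$ is either in $K_j$ or is $H$-absorbed by $K_j$; both possibilities hold by the definition of $K_j$ as an $H$-kernel of $(G_j)_{\Lambda_j}$.

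The \textbf{main obstacle} is the merging step at $a$: the status of $a$ in $K_j$ (either $a\in K_j$ or $a$ is $H$-absorbed inside $(G_j)_{\Lambda_j}$) may clash with the status dictated by $K'$, and the tree arcs incident to $a$ may produce $H$-walks in $G_{\Lambda}$ that either violate the independence of $K_j\cup K'$ or create spurious new absorbent walks. Since $T$ has no cycles, no $H$-walk can loop back into $G_j$ from $T_0$, so every such conflict is local at $a$. I expect to resolve it by a detailed case analysis on the colors of the arcs between $a$ and $T_0$ and on the memberships of $a$ in $K_j$ and $K'$, in the spirit of the case split used in the proof of Theorem~3.3; the fix may require reselecting $K_j$ among the $H$-kernels of $(G_j)_{\Lambda_j}$ or shifting a few non-distinguished tree vertices in and out of the kernel, but never forces an additional subdivision, yielding $\kappa^{H}(G)\leq |\Lambda|=\sum_i\kappa^{H}(G_i)$.
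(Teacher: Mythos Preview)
Your proposal leaves the crucial step unproved. You identify the merge at $a$ as the main obstacle and then postpone it to an unspecified case analysis, a possible reselection of $K_j$, or local shifts of tree vertices. None of this is carried out, and in general these fixes need not exist: $(G_j)_{\Lambda_j}$ may have a \emph{unique} $H$-kernel, so there is nothing to reselect, and the first $T_0$-vertex beyond $a$ may be forced (for instance a sink, which must lie in every $H$-kernel), so no shift is available. The same difficulty already appears at the roots of your pendant branches $T_1,\dots,T_r$, which you pass over too quickly. The structural reason is that choosing $v_j$ as a Steiner-tree leaf is an \emph{undirected} decision: it gives you no control over the orientation of the tree arcs meeting $G_j$, and either orientation can destroy $H$-independence of $K_j\cup K'$ via a monochromatic walk crossing $a$.

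The paper bypasses the merge entirely by exploiting the orientation of $T$. Instead of a Steiner leaf it looks for some $v_i$ with $d_T^{-}(v_i)=0$, so that every tree arc incident with $G_i$ points \emph{out} of $G_i$; it then deletes the closed colour in-neighbourhood $N_c^{-}[K_i]$ from $G$, applies induction to the remainder $G'$, and takes $K'\cup K_i$ as the $H$-kernel of $G_{\Lambda}$, with no interface case analysis. If every distinguished vertex has positive in-degree in $T$, the paper first strips off the out-degree-zero tree vertices $J$ together with $N_c^{-}[J]$, iterating until some block disappears. Replacing your Steiner-leaf reduction by this directed peeling is what closes the gap.
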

\begin{proof}
    Let $\Lambda_i$ denote a set of arcs such that $G_{i_{\Lambda_i}}$ has an  $H$-kernel $K_i$, and let $v_1,\dots, v_n$ denote the distinguished vertices of $T$.
    Proceed by induction over $n.$ For the base case, consider $G_{\Lambda_1}$ and notice that $K_1$ can be extended to an  $H$-kernel of $G$, thus $\kappa^H(G)\leq \kappa^H(G_1).$
    For the inductive step, we have two cases.
    
    If $d^-(v_i)=0$ in $T$ for some $1 \leq i \leq n$, assume that $i=1$ and consider $G_{\Lambda_1}.$ Let $G'$ be the digraph obtained from $G_{\Lambda_1}$ by removing $N_c^-[K_1]$. Then $G'$ is a possibly disconnected digraph such that each component is obtained from a tree by substituting less than $n$ vertices with graphs $G_i$. Thus by induction hypothesis, $\kappa^H(G')\leq \sum\limits_{i=2}^n\kappa^H(G_i)$ and assume that $G'_{\Lambda'}$ has an $H$-kernel $K'$ where $\Lambda'= \bigcup\limits_{i=2}^n\Lambda_i.$ Then $K'\cup K_1$ is an  $H$-kernel for $G_{\Lambda}$ where $\Lambda=\Lambda'\cup \Lambda_1$ thus $|\Lambda|\leq \sum_{i=1}^n \kappa^H(G_i).$

    If $d^-(v_i)>0$ for $1\leq i \leq n$, let $J$ denote the vertices of $T$ which have out-degree zero. Then $G\setminus N_c^-[J]$ is a possibly disconnected digraph such that each component is obtained from a tree by substituting less than $n$ vertices with graphs $G_i$ (otherwise we take again the vertices of out-degree zero in  $G\setminus N_c^-[J]$ and remove them together with their color in-neighborhood). Thus by induction hypothesis, $(G\setminus N_c^-[J])_{\Lambda}$ has an  $H$-kernel $K$ with $|\Lambda|\leq \sum\limits_{i=1}^n |\Lambda_i|.$ Then $K\cup J$ is an  $H$-kernel for $G_{\Lambda}$ thus $\kappa^H(G) \leq \sum\limits_{i=1}^n \kappa^H(G_i).$
\end{proof}

\begin{coro}
Let $D$ be a split-root product of a tree $T$ with a family of $n$ digraphs which are directed cycles or theta-type graphs. Then $\kappa^H(G)\leq n.$
\end{coro}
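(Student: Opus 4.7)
The plan is to reduce the corollary immediately to the preceding theorem, which already provides the bound $\kappa^H(G)\leq \sum_{i=1}^n \kappa^H(G_i)$ for any split-root product of a tree with a family $\{G_i\}_{i=1}^n$. Thus it is enough to verify that each summand satisfies $\kappa^H(G_i)\leq 1$, and summing over $i$ will yield the desired inequality $\kappa^H(G)\leq n$.

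For each $G_i$ which is a theta-type graph, the bound $\kappa^H(G_i)\leq 1$ is exactly the content of the previous theorem on theta-type graphs, so nothing further needs to be done in that case. For each $G_i$ which is a directed cycle $C$, I would split into two subcases according to whether $C$ is hetero-chromatic. If $C$ is not hetero-chromatic, then by Theorem \ref{cn} it already possesses an $H$-kernel, whence $\kappa^H(C)=0\leq 1$. If $C$ is hetero-chromatic, I would subdivide a single arc of $C$ arbitrarily; by Definition \ref{defi3} (together with our standing assumption that $A(H)=\{(v,v):v\in V(H)\}$) the subdivided arc becomes a monochromatic path of length two, so the resulting digraph is a directed cycle of length $|V(C)|+1$ that now carries only $|V(C)|$ colors and hence is no longer hetero-chromatic. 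A second application of Theorem \ref{cn} then produces an $H$-kernel of the subdivided cycle, giving $\kappa^H(C)\leq 1$ in this subcase as well.

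Combining both subcases yields $\kappa^H(G_i)\leq 1$ uniformly across all $i$, and invoking the preceding theorem gives
\[
\kappa^H(G)\leq \sum_{i=1}^n \kappa^H(G_i)\leq \sum_{i=1}^n 1 = n,
\]
as required. The only delicate point is the hetero-chromatic cycle subcase, where one must confirm that subdividing a single arc genuinely destroys the hetero-chromatic property so that Theorem \ref{cn} applies; this is immediate from the $H$-coloring rule for subdivisions when $H$ has only loops, so no real obstacle arises and the argument is essentially a clean application of the previously established results.
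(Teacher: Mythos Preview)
Your proposal is correct and matches the paper's intended argument: the corollary is stated without proof in the paper precisely because it follows immediately from the preceding split-root theorem together with the bounds $\kappa^H(G_i)\leq 1$ already established for theta-type graphs and (via Theorem~\ref{cn} plus a single subdivision in the hetero-chromatic case) for directed cycles. Your handling of the hetero-chromatic cycle subcase is exactly the right way to fill in that detail.
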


\begin{ejem}
    Given any coloring on the edges of the digraph $G$ shown in Figure \ref{splitroot}, we have $\kappa^H(G)\leq 4.$
\end{ejem}

\end{document}